\documentclass[10pt]{amsart}
\usepackage{latexsym, amscd, amsfonts, eucal, mathrsfs, amsmath, amssymb, amsthm, tikz-cd, xr, stmaryrd, amsxtra, MnSymbol,bbm}
\usepackage[alphabetic]{amsrefs}
\usepackage{enumitem}
\usepackage[hidelinks]{hyperref}
\usepackage{chngcntr}

\counterwithin{equation}{subsection}
\setcounter{tocdepth}{1}

\newtheorem{theorem}[equation]{Theorem}
\newtheorem{lemma}[equation]{Lemma}
\newtheorem{lemma/construction}[equation]{Construction/Lemma}

\newtheorem{proposition}[equation]{Proposition}
\newtheorem{corollary}[equation]{Corollary}

\theoremstyle{definition}

\newtheorem{prop-con}[equation]{Proposition--Construction}
\newtheorem{notation}[equation]{Notation}

\newtheorem{construction}[equation]{Construction}

\newtheorem{definition}[equation]{Definition}
\newtheorem{definition/lemma}[equation]{Definition--Lemma}

\theoremstyle{remark}
\newtheorem{remark}[equation]{Remark}

\title{Potential diagonalisability of pseudo-Barsotti--Tate representations}

\author{Robin Bartlett}
\date{\today}
\begin{document}
	
	\maketitle
	\begin{abstract}
		Previous work of Kisin and Gee proves potential diagonalisability of two dimensional Barsotti--Tate representations of the Galois group of a finite extension $K/\mathbb{Q}_p$. In this paper we build upon their work by relaxing the Barsotti--Tate condition to one we call pseudo-Barsotti--Tate (which means that for certain embeddings $\kappa:K \rightarrow \overline{\mathbb{Q}}_p$ we allow the $\kappa$-Hodge--Tate weights to be contained in $[0,p]$ rather than $[0,1]$).
	\end{abstract}
	\tableofcontents

\section{Introduction}

\subsection{Overview} Following \cite[\S 1.4]{BLGGT}, a potentially crystalline representation of $G_K$ is potentially diagonalisable if, after restricting to $G_{K'}$ for some finite $K'/K$, it is contained in the same irreducible component of a crystalline deformation ring as a direct sum of characters.  In \cite{BLGGT} automorphy lifting theorems are proved for global representations which are potentially diagonalisable at places above $p$.

Unfortunately, potential diagonalisability has been established in only a small number of cases. If $K/\mathbb{Q}_p$ is unramified then crystalline representations with Hodge type in the Fontaine--Laffaille range are known to be potentially diagonalisable, cf. \cite{BLGGT} and \cite{GL14} for the extended Fontaine--Laffaille range. It is also known for ramified $K$ and Barsotti--Tate Hodge types (i.e. those concentrated in degrees $[0,1]$) by results in \cite{KisFF} and \cite{Gee06}. 

In \cite{B19} the author extended these results when $K/\mathbb{Q}_p$ is unramified to Hodge types concentrated in degrees $[0,p]$ (also a mild cyclotomic-freeness assumption is required). The aim of this paper is to show how the methods of \emph{loc. cit.} can also be applied when $K$ ramifies. The following are the precise assumptions we require:

\begin{definition}\label{introdef}
	Let $k$ denote the residue field of $K$ and choose an indexing $\kappa_{ij}$ for the embeddings $K \hookrightarrow \overline{\mathbb{Q}}_p$ so that $\kappa_{ij}|_k = \kappa_{i'j'}|_k$ if and only if $i=i'$. Also let $\mathbb{F}$ be a finite extension of $\mathbb{F}_p$.
	\begin{enumerate}
		\item A Hodge type $\mu = (\mu_{\kappa})_{\kappa:K\rightarrow \overline{\mathbb{Q}}_p}$ is pseudo-Barsotti--Tate if there exists such an indexing $\kappa_{ij}$ so that  $\mu_{\kappa_{ij}} \subset [0,h_j]$ with $h_1=p$ and $h_2=\ldots=h_e=1$.
		\item A continuous representation $V_{\mathbb{F}}$ of $G_K$ on an $\mathbb{F}$-vector space is cyclotomic-free if there exists an unramified extension $K'/K$ such that every Jordan--Holder factor $V$ of $V_{\mathbb{F}}|_{G_{K'}}$ is one-dimensional, and if $V$ is unramified then $V \otimes \mathbb{F}(-1)$ is not a Jordan--Holder factor  of $V_{\mathbb{F}}|_{G_{K'}}$.
	\end{enumerate}
\end{definition}

Thus, being pseudo--Barsotti--Tate is somewhere between being Barsotti--Tate and being concentrated in degrees $[0,p]$. The cyclotomic-freeness condition avoids possible extensions of the inverse of the cyclotomic character by the trivial representations. For example, the only non-cyclotomic-free two dimensional representations are of the form $\psi \otimes \left( \begin{smallmatrix}
	1 & * \\ &  \chi_{\operatorname{cyc}}^{-1} 
\end{smallmatrix}\right)$ for some unramified character $\psi$. Note also that cyclotomic-freeness depends only on the representations semi-simplifaction.

\begin{theorem}
	Every crystalline representation $V$ of $G_K$ with pseudo-Barsotti--Tate Hodge type and cyclotomic-free residual representation is potentially diagonalisable.
\end{theorem}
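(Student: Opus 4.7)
The plan is to adapt the strategy of \cite{B19} from the unramified to the ramified setting, exploiting the fact that in the pseudo-Barsotti--Tate case only one embedding per residual class has Hodge--Tate weights outside $[0,1]$. Potential diagonalisability amounts to showing that, after restriction to a suitable finite extension $K'/K$, the point corresponding to $V$ lies on the same irreducible component of a framed crystalline deformation ring as a point corresponding to a direct sum of crystalline characters.

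My first step would be to reduce to the case where the residual representation $\overline{V}|_{G_{K'}}$ is a direct sum of characters. The cyclotomic-freeness hypothesis is tailored for this: by definition it provides an unramified $K'/K$ for which all Jordan--H\"older factors are one-dimensional, and it rules out exactly the non-split extensions of $\chi_{\mathrm{cyc}}^{-1}$ by the trivial character, which are the only extensions among characters in the Barsotti--Tate range that cannot be deformed to a split one. Once $\overline{V}|_{G_{K'}}$ is a sum of characters, the problem becomes the analysis of irreducible components of $R^{\square,\mathrm{cris},\mu}_{\overline{V}|_{G_{K'}}}$.

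Next I would parameterise crystalline lifts of $\overline{V}|_{G_{K'}}$ with pseudo-Barsotti--Tate Hodge type $\mu$ by a moduli space $\mathcal{M}$ of Breuil--Kisin modules whose Frobenius elementary divisors are bounded embedding by embedding by the $h_j$ from Definition \ref{introdef}. This space maps properly to $\mathrm{Spec}\, R^{\square,\mathrm{cris},\mu}$, and I expect a product-type decomposition over embeddings to reduce the local analysis to two already-understood factors: at the $(e-1)$ embeddings $\kappa_{ij}$ with $j \geq 2$ the local factor is the Barsotti--Tate geometry from \cite{KisFF} and \cite{Gee06}, while at the distinguished embedding $\kappa_{i1}$ the local factor is essentially the $[0,p]$-bounded geometry from \cite{B19}. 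The goal is then to show that every irreducible component contains a point whose associated Galois representation is a sum of crystalline characters, by deforming towards the locus where the Breuil--Kisin module is diagonal.

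The hardest step will be proving that this product picture survives globally, and that no extra components arise from the interplay between the ramification and the single embedding with weights in $[0,p]$. This should require a careful dimension count showing that $\mathcal{M}$ is a complete intersection of the expected dimension, together with a shape-stratification argument showing that each shape is connected within $\mathcal{M}$ to the diagonal shape; the cyclotomic-freeness hypothesis is what allows this final step, by preventing the extra components that the problematic cyclotomic extensions would otherwise introduce. Granted this, moving the point $V$ along the component to the diagonal locus produces the required sum-of-characters deformation and completes the proof.
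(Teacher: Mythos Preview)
Your overall architecture (restrict to $K'$, study a moduli of Breuil--Kisin modules mapping to the deformation ring, connect $V$ to an ordinary lift) matches the paper's, but the technical core of your proposal rests on a structural claim that does not hold. You expect $\mathcal{M}$ to admit a ``product-type decomposition over embeddings'' separating Barsotti--Tate factors at $j\ge 2$ from a $[0,p]$-factor at $j=1$. In the ramified case no such decomposition exists: the isomorphism $W(k)\otimes_{\mathbb{Z}_p}\mathcal{O}\cong\prod_i\mathcal{O}$ only separates the unramified indices $i$, and the idempotents that would split $\mathcal{O}_K\otimes\mathcal{O}$ over the $e$ embeddings with the same $i$ are not integral. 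Concretely, the $E_{ij}(u)$ with fixed $i$ are all congruent to $u$ modulo $\pi$, so the relative position of $\mathfrak{M}^\varphi$ inside $\mathfrak{M}$ cannot be disentangled into independent $j$-indexed pieces on the special fibre. This coupling is exactly why even Kisin's Barsotti--Tate resolution $\mathcal{L}^{\mu}_R$ is only normal and not smooth when $e>1$, and your proposed reduction to \cite{KisFF} $\times$ \cite{B19} does not go through.

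The paper's mechanism is different in kind: rather than hoping for a product, it passes to a further Demazure-type resolution $\mathcal{L}^{\mu,\operatorname{conv}}_R\to\mathcal{L}^{\mu}_R$ parametrising Breuil--Kisin modules together with an auxiliary filtration $\mathcal{F}^0\supset\cdots\supset\mathcal{F}^e$ refining $\mathfrak{M}^\varphi\supset(\prod_jE_j^{h_j})\mathfrak{M}$, and proves that $\mathcal{L}^{\mu,\operatorname{conv}}_R$ is formally \emph{smooth} over $\mathbb{Z}_p$ (Theorem~\ref{main}), not merely a complete intersection. Smoothness is what lets ``same closed point'' imply ``same irreducible component''; your complete-intersection plus shape-stratification plan would still have to supply a connectedness argument. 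The proof of smoothness goes through a tangent-space bound in a category $\operatorname{Mod}^{\operatorname{SD}}_{\mathcal{F}}$ of strongly divisible filtered mod-$p$ Breuil--Kisin modules, with an explicit Euler-characteristic computation for $\operatorname{Ext}^1$ (Proposition~\ref{extdim}). Finally, cyclotomic-freeness is not used to split mod-$p$ extensions as you suggest; it enters so that $\varphi$-equivariant endomorphisms of $\mathfrak{M}_x$ are automatically $G_K$-equivariant (Proposition~\ref{unique}, Lemma~\ref{cyclofree}), which is what makes the kernel of $T_x\to\operatorname{Ext}^1_{\operatorname{SD}}$ small enough for the bound in Proposition~\ref{bound}.
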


\subsection{Method}
The typical method for establishing potential diagonalisability is to replace $V$ by $V|_{G_K'}$ for $K'/K$ a sufficiently large unramified extension so that the residual representation becomes a successive extension of one-dimensional representations (such a $K'$ always exists after possibly extending the coefficient field). While $V$ may not itself be ordinary (that is, have every Jordan--Holder factor one-dimensional) one aims to produce an ordinary $V'$ lying on the same irreducible component in the crystalline deformation ring.

For $K/\mathbb{Q}_p$ unramified and Hodge types in the Fontaine--Laffaille range the key input which enables this approach is the observation that the deformation rings in question are formally smooth over $\mathbb{Z}_p$. Unfortunately, this is not the case once one leaves the Fontaine--Laffaille range. In \cite{B19} we addressed this by considering instead Kisin's ``resolution'' by moduli of Breuil--Kisin modules:
$$
\mathcal{L}^\mu_{R_{V_{\mathbb{F}}}} \rightarrow \operatorname{Spec}R^\mu_{V_{\mathbb{F}}}
$$ 
The key calculation was that for $\mu$ concentrated in degrees $[0,p]$, $\mathcal{L}^\mu$ is formally smooth over $\mathbb{Z}_p$. As this morphism becomes an isomorphism after inverting $p$, potentialy diagonalisability can then be established by arguing as in the previous paragraph, but with $V$ and $V'$ replaced with points in $\mathcal{L}^\mu_{R_{V_{\mathbb{F}}}}$, i.e. after replacing $V$ and $V'$ by their corresponding Breuil--Kisin modules.

When $K/\mathbb{Q}_p$ ramifies the situation is worse still. Even in the case of Barsotti--Tate $\mu$ considered in \cite{KisFF} the $\mathcal{L}^\mu_{R_{V_{\mathbb{F}}}} $ have normal special fibres but need not be smooth. This normality is sufficient to establish potential diagonalisability in some cases via an explicit construction of paths between points in these spaces. However, this involves some laborious computations. The key idea in this paper is recover smoothness by replacing $\mathcal{L}^\mu_{R_{V_{\mathbb{F}}}} $ by a futher ``Demazure'' type resolution 
$$
\mathcal{L}^{\mu,\operatorname{conv}}_{R_{V_{\mathbb{F}}}}  \rightarrow \mathcal{L}^\mu_{R_{V_{\mathbb{F}}}} 
$$
classifying Breuil--Kisin modules together with a specific filtration $\mathcal{F}^\bullet$ on the image of its Frobenius. The key technical result is then:

\begin{theorem}\label{smooth}
	Assume that $\mu$ is pseudo--Barsotti--Tate and $V_{\mathbb{F}}$ is cyclotomic-free (in fact a weaker condition suffices here). Then 
	$$
	\mathcal{L}^{\mu,\operatorname{conv}}_{R_{V_{\mathbb{F}}}} \rightarrow \operatorname{Spec}R^\mu_{V_{\mathbb{F}}}
	$$
	becomes an isomorphism after inverting $p$ and the local rings of $\mathcal{L}^{\mu,\operatorname{conv}}_{R_{V_{\mathbb{F}}}}$ are formally smooth over $\mathbb{Z}_p$ at closed points.
\end{theorem}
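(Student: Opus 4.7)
The plan is to prove the isomorphism and formal smoothness separately.

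For the isomorphism after inverting $p$, I would factor the map through $\mathcal{L}^\mu_{R_{V_{\mathbb{F}}}}$. Kisin's resolution $\mathcal{L}^\mu \to \operatorname{Spec}R^\mu$ is already known to be an isomorphism on generic fibres, so it suffices to show the Demazure step $\mathcal{L}^{\mu,\operatorname{conv}} \to \mathcal{L}^\mu$ is also an isomorphism after inverting $p$. This should reduce to checking that the datum of the filtration $\mathcal{F}^\bullet$ on the image of the Frobenius is not really additional data in characteristic zero: one argues that $\mathcal{F}^\bullet$ is forced to agree with the filtration induced by the Hodge filtration on the associated weakly admissible filtered $\varphi$-module, whose jumps are precisely the $\kappa$-Hodge--Tate weights. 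Uniqueness of such a filtration, combined with representability, produces a section realising the inverse.

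For formal smoothness at a closed point $x$ corresponding to a pair $(\mathfrak{M}_0,\mathcal{F}^\bullet_0)$, I would compute the tangent and obstruction spaces for deformations directly. The whole point of inserting the Demazure resolution is that $\mathcal{F}^\bullet$ factorises the Frobenius of Hodge type $\mu$ into successive rank-at-most-one steps, each with Hodge jumps in $\{0,1\}$. The deformation problem thus unwinds as an iterated extension of Barsotti--Tate-type deformation problems, whose smoothness is already known from \cite{B19} and \cite{KisFF}, or is accessible via a Koszul-type calculation on the coordinate ring of the local model. An induction on the length of $\mathcal{F}^\bullet$ would then propagate smoothness to $\mathcal{L}^{\mu,\operatorname{conv}}$ itself, with the pseudo-Barsotti--Tate condition $h_j=1$ for $j\geq 2$ ensuring that the filtration has length at most $p$ in one embedding and at most $1$ in the others, i.e.\ each individual step really is Barsotti--Tate.

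The main obstacle is ensuring that the extensions gluing these Barsotti--Tate steps together are unobstructed. Inductively, the obstruction classes live in $\operatorname{Ext}^1$-groups whose constituents, after restricting to a sufficiently large unramified $K'/K$, are Hom-spaces between Jordan--Holder factors of $V_{\mathbb{F}}|_{G_{K'}}$ twisted by the cyclotomic character. Cyclotomic-freeness is precisely the hypothesis that forces these to vanish: it makes all the Jordan--Holder factors over $K'$ one-dimensional and explicitly forbids the pairs related by an $\mathbb{F}(-1)$-twist that would otherwise contribute. The delicate technical point, and the place where the argument genuinely improves on the Barsotti--Tate case treated in \cite{KisFF}, is to identify these obstruction groups correctly at the embedding where $h_1=p$ contributes the longest filtration and hence the most potential obstructions; everywhere else the calculation collapses to the known Barsotti--Tate one.
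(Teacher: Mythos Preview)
Your treatment of the isomorphism after inverting $p$ is essentially correct and matches the paper: the filtration $\mathcal{F}^\bullet$ is uniquely determined in characteristic zero (Proposition~\ref{conv}), and $\mathcal{L}^{\mu,\operatorname{conv}}_R$ is by definition the closure of $\operatorname{Spec}R^\mu[\frac{1}{p}]$, so this part is almost formal.

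However, your argument for formal smoothness has a genuine gap. The inductive strategy rests on the claim that each step of $\mathcal{F}^\bullet$ is a Barsotti--Tate deformation problem whose smoothness is ``already known from \cite{B19} and \cite{KisFF}''. This fails on two counts. First, the filtration $\mathcal{F}^\bullet$ has $e$ steps indexed by the embeddings $j=1,\ldots,e$, and the $j=1$ step satisfies $E_1(u)^p\mathcal{F}^0 \subset \mathcal{F}^1$, i.e.\ it has height $p$, not height $1$; it is \emph{not} Barsotti--Tate. Second, and more seriously, even for genuinely Barsotti--Tate $\mu$ the spaces $\mathcal{L}^\mu_R$ are \emph{not} known to be smooth when $K/\mathbb{Q}_p$ is ramified: as the introduction and the remark following Theorem~\ref{main} explain, \cite{KisFF} only establishes normality of the special fibre in that case. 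So the base case of your induction is unavailable, and the whole reduction collapses.

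The paper takes a completely different route: rather than computing obstruction classes, it bounds the dimension of the mod-$p$ tangent space at a closed point $x$ by $d^2 + d(\mu,\mu)$ and compares with the known dimension of the generic fibre. The bound is obtained by (i) lifting any mod-$p$ tangent vector to an $A$-valued point for $A$ finite flat over $\mathcal{O}$ (using that $\mathcal{L}^{\mu,\operatorname{conv}}_R$ is $\mathcal{O}$-flat by construction), (ii) applying a GLS-type result (Proposition~\ref{basis}) to force the mod-$p$ reduction into the category $\operatorname{Mod}^{\operatorname{SD}}_{\mathcal{F}}$ of strongly divisible objects, and (iii) computing $\operatorname{Ext}^1_{\operatorname{SD}}(\mathfrak{M}_x,\mathfrak{M}_x)$ via an explicit two-term complex (Propositions~\ref{extdim} and~\ref{niceform}). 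Cyclotomic-freeness enters not to kill obstruction classes but, via Proposition~\ref{unique}, to ensure that $\varphi$-equivariant maps between the relevant Breuil--Kisin modules are automatically $G_K$-equivariant; this is what controls the kernel of $T_x \to \operatorname{Ext}^1_{\operatorname{SD}}$ in Proposition~\ref{bound}. Equality of tangent dimension and Krull dimension then gives regularity of the special fibre, and $\mathbb{Z}_p$-flatness upgrades this to formal smoothness.
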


Once we have this theorem potential diagonalisability follows by an essentially identical argument to that employed in \cite{B19}.

The proof of Theorem~\ref{smooth} is based on a tangent space calculation; we show that at any closed point the tangent space of the special fibre is $\leq$ the dimension of generic fibre. Since the generic fibre identifies with the generic fibre of $\operatorname{Spec}R^\mu_{V_{\mathbb{F}}}$ the latter value is well-known. To bound the mod~$p$ tangent space we observe that since $\mathcal{L}^{\mu,\operatorname{conv}}_{R_{V_{\mathbb{F}}}}$ is $\mathbb{Z}_p$-flat by definition any mod~$p$ tangent vector is induced from an $A$-valued point for $A$ some finite flat $\mathbb{Z}_p$-algebra. Such an $A$-valued point corresponds to a filtered Breuil--Kisin module attached to a crystalline representation on a finite $A$-module. Forgetting the $A$-action we use a generalisation to dimensions $>2$ of a result from \cite{GLS15} to ensure that the reduction modulo $p$ of this filtered Breuil--Kisin module is of a specific form (this is where the restriction to pseudo-Barsotti--Tate Hodge types is crucial). Computing the possible extensions of filtered Breuil--Kisin modules of this specific form produces the desired bound.

\subsection*{Acknowledgements}

I would like to thank the Max Planck Institute for Mathematics for its support during the writing of this paper.

\section{Notation}\label{notation}

\subsection{General conventions}
Throughout we let $K$ denote a finite extension of $\mathbb{Q}_p$ with residue field a degree $f$ extension $k$ of $\mathbb{F}_p$. Let $e$ denote the ramification degree of $K$ over $\mathbb{Q}_p$ and fix a uniformiser $\pi \in K$. Let $G_K$ denote the absolute Galois group of $K$. We write $E(u) \in W(k)[u]$ for the minimal polynomial of $\pi$ over $W(k)$. This is a degree $e$ polynomial with $E(u) \equiv u^e$ modulo $\pi$. We also fix a compatible system $\pi^{1/p^\infty}$ of $p$-th power roots of $\pi$ inside a completed algebraic closure $C$ of $K$. We set $K_\infty = K(\pi^{1/p^\infty})$. When $p=2$ we additionally require that $\pi$ be as in the following lemma (when $p>2$ this condition is automatic).

\begin{lemma}\label{1.1.1}
	If $p=2$ then there exists a uniformiser $\pi \in K$ so that $K_\infty \cap K(\mu_{p^\infty}) = K$; here $\mu_{p^\infty}$ denotes the group of $p$-th power roots of unity in $C$.
\end{lemma}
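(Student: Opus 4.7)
The plan is to exhibit a uniformiser $\pi$ satisfying $\sqrt{\pi}\notin F:=K(\mu_{2^\infty})$, and then deduce the lemma from this single condition.

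For the reduction step, observe that for any uniformiser $\pi$ of $K$ the polynomial $X^{2^n}-\pi$ is irreducible over $K$: we have $\pi\notin (K^\times)^2$ because $v(\pi)=1$, and $\pi\neq -4c^4$ for any $c\in K$ because the right hand side has even valuation, so Capelli's criterion applies. Writing $A=\mathbb{Z}/2^n\mathbb{Z}$ and $B=\mathrm{Gal}(K(\mu_{2^n})/K)$, the Galois closure of $K(\pi^{1/2^n})/K$ is $M_n := K(\pi^{1/2^n},\mu_{2^n})$ with Galois group $A\rtimes B$, and $K(\pi^{1/2^n})$ corresponds to the stabiliser $B$ of $\pi^{1/2^n}$. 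Any subgroup of $A\rtimes B$ containing $B$ splits as $C\rtimes B$ for some $B$-stable $C\subset A$; since $A$ is cyclic, its subgroups are the $2^kA$, all of which are automatically $B$-stable. Consequently the only subfields of $K(\pi^{1/2^n})/K$ are the $K(\pi^{1/2^k})$ for $0\le k\le n$. If $\sqrt\pi\notin F$, then $K(\pi^{1/2^n})\cap F$ is a subfield of $K(\pi^{1/2^n})$ that cannot contain $\sqrt\pi$, hence equals $K$; passing to the union over $n$ gives $K_\infty\cap F=K$.

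To produce such a $\pi$, consider $S:=\{[x]\in K^\times/(K^\times)^2 \mid \sqrt{x}\in F\}$. This is a subgroup whose nontrivial elements enumerate the quadratic subextensions of $F/K$. Since $\mathrm{Gal}(F/K)$ is an open subgroup of $\mathbb{Z}_2^\times\cong \mathbb{Z}/2\times\mathbb{Z}_2$, its mod-$2$ quotient has order at most $4$, so $|S|\le 4$. On the other hand the standard computation yields $|K^\times/(K^\times)^2|=2^{[K:\mathbb{Q}_2]+2}\ge 8$, of which precisely half are uniformiser classes. Fix any uniformiser $\pi_0$. If $[\pi_0]\notin S$, take $\pi=\pi_0$. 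Otherwise the unit classes in $S$ form a subgroup of size $|S|/2\le 2$, while there are at least $4$ unit classes in total; thus some unit $u$ has $[u]\notin S$, and then $\pi:=\pi_0 u$ satisfies $[\pi]\notin S$ because $S$ is a subgroup containing $[\pi_0]$.

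The main technical hurdle is the subfield-lattice step. It relies on Capelli's criterion (crucially using the odd valuation of $\pi$ to exclude the exceptional form $-4c^4$) and on the fact that every subgroup of $A\rtimes B$ containing $B$ is a semidirect product $C\rtimes B$ for some $B$-stable $C\subset A$. Both ingredients are elementary but indispensable for ruling out exotic subfields of the Kummer tower.
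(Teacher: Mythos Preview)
Your argument is correct, and it differs from the paper in that the paper simply cites \cite[2.1]{Wang17} without giving any details, whereas you supply a self-contained elementary proof via Kummer theory and a counting argument on $K^\times/(K^\times)^2$. Your route has the advantage of making the result independent of the cited reference and of clarifying exactly which feature of $F=K(\mu_{2^\infty})$ is being used (namely that $\mathrm{Gal}(F/K)$ has at most three quadratic quotients).

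One presentational point deserves tightening. You assert, for \emph{any} uniformiser $\pi$, that $\mathrm{Gal}(M_n/K)\cong A\rtimes B$. This is not automatic: it is equivalent to $[M_n:K(\mu_{2^n})]=2^n$, i.e.\ to $\pi$ not being a square in $K(\mu_{2^n})$. For a general uniformiser this can fail (for instance $\sqrt{-2}\in\mathbb{Q}_2(\mu_8)$). What saves the argument is that you only \emph{use} the subfield description under the hypothesis $\sqrt{\pi}\notin F$, and since $K(\mu_{2^n})\subset F$ this hypothesis guarantees $\pi\notin K(\mu_{2^n})^2$, hence the full semidirect product structure. It would be cleaner to state the Galois group computation after invoking $\sqrt{\pi}\notin F$, or to note explicitly that this hypothesis is what forces $|\mathrm{Gal}(M_n/K)|=2^n|B|$. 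With that adjustment the reduction step is watertight, and the existence step (the counting on $K^\times/(K^\times)^2$ versus the at most four classes in $S$) is entirely correct as written.
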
 
\begin{proof}
	See \cite[2.1]{Wang17}.
\end{proof}
\subsection{Coefficients}\label{coeff}
We also fix a finite extension $E$ of $\mathbb{Q}_p$ with ring of integers $\mathcal{O}$ and residue field $\mathbb{F}$. These play the role of coefficient rings. We assume that $E$ contains a Galois closure of $K$ so that there are $ef$ distinct embeddings $K\hookrightarrow E$. It will be convenient to choose an indexing $\kappa_{ij}$ of these embeddings by $1\leq i\leq f$ and $1 \leq j \leq e$ as in Definition~\ref{introdef} so that 
$$
\kappa_{ij}|_k = \kappa_{i'j'}|_k \Leftrightarrow i=i'
$$
There is an isomorphism $K \otimes_{\mathbb{Q}_p} E \cong \prod_{ij} E$ given by $a\otimes b \mapsto (\kappa_{ij}(a)b)_{ij}$. This allows us to decompose any $K\otimes_{\mathbb{Q}_p} E$-module $M$ as $\prod_{ij} M_{ij}$ where $M_{ij}$ is the submodule of $M$ on which $K$ acts via $\kappa_{ij}$.

We emphasise that the identification $K\otimes_{\mathbb{Q}_p} E \cong \prod_{ij} E$ does not descend to $\mathcal{O}_K \otimes_{\mathbb{Z}_p} \mathcal{O}$ because the idempotents in $K\otimes_{\mathbb{Q}_p} E$ involve non-integral terms (the only exception being when $K=K_0$). However, we do have a similar decomposition $W(k)\otimes_{\mathbb{Z}_p}\mathcal{O} \cong \prod_i \mathcal{O}$ given by $a\otimes b \mapsto (\kappa_i(a)b)_i$ where $\kappa_i = \kappa_{ij}|_{W(k)}$ (which by construction is independent of $j$). Thus, every $W(k)\otimes_{\mathbb{Z}_p} \mathcal{O}$-module $M$ can be decomposed as $M = \prod_i M_i$ where $M_i$ denotes the submodule of $M$ on which $W(k)$ acts through $\kappa_i$. 

In particular this allows us to refine the construction of $E(u) \in W(k)[u]$ as follows. Define $E_{ij}(u) \in W(k)[u]\otimes_{\mathbb{Z}_p} \mathcal{O}$ as the element corresponding to
$$
(\ldots0,\underbrace{u - \kappa_{ij}(\pi)}_{i-th\text{ position}},0\ldots) \in \prod_i \mathcal{O}[u]
$$
under the identification $W(k)[u] \otimes_{\mathbb{Z}_p} \mathcal{O} \cong \prod_i \mathcal{O}[u]$. We also set $E_j(u) = \prod_{i=1}^f E_{ij}(u)$. Note that $\prod_{ij} E_{ij}(u) = \prod_j E_j(u) = E(u)$.

\subsection{Filtered modules}\label{filt}
A filtered module $M$ over a ring $A$ is a finite $A$-module equipped with a filtration 
$$
\ldots \subset \operatorname{Fil}^{i+1}(M) \subset \operatorname{Fil}^i(M) \subset \ldots  
$$
by $A$-submodules of $M$ with $A$-projective graded pieces and with $\operatorname{Fil}^n(M) = 0$ for $n>>0$ and $\operatorname{Fil}^n(M) = M$ for $n<<0$. If $\lambda$ is a multiset of integers then we say $M$ has type $\lambda$ is $\operatorname{gr}^n(M)$ has constant rank equal to the multiplicity of $n$ in $\lambda$. If $M'$ is another filtered $A$-module write $\operatorname{Hom}_{\operatorname{Fil}}(M,M')$ for the module of $A$-linear homomorphisms $M\rightarrow M'$ equipped with the filtration 
$$
\operatorname{Fil}^n(\operatorname{Hom}_{\operatorname{Fil}}(M,M')) = \lbrace x:M\rightarrow M'\mid x(\operatorname{Fil}^n(M)) \subset \operatorname{Fil}^n(M')\rbrace
$$ 
If $M'$ has type $\lambda'$ then
$$
d(M,M') = \operatorname{rank}_A \frac{\operatorname{Hom}_{\operatorname{Fil}}(M,M')}{\operatorname{Fil}^0(\operatorname{Hom}_{\operatorname{Fil}}(M,M'))}
$$
is equal to the $\sum_{x \in \lambda} \operatorname{Card}(\lbrace x'\in \lambda' \mid x>x'\rbrace)$. In particular, it depends only on $\lambda$ and $\lambda'$ and we write $d(M,M') = d(\lambda,\lambda')$. 

\subsection{Hodge types}\label{Hodgetype}
A Hodge type $\mu$ is a tuple $(\mu_{ij})$ indexed by $1\leq i \leq f, 1\leq j\leq e$ of multisets of integers (all of the same cardinality). The decompositions from Section~\ref{coeff} allows us to produce, from either of the following two sets of data,
\begin{enumerate}
	\item A tuple $D_1,\ldots,D_e$ of filtered $k\otimes_{\mathbb{F}_p} \mathbb{F}$-modules.
	\item A filtered $K\otimes_{\mathbb{Q}_p} E$-module.
\end{enumerate}
a tuple of filtered vector spaces indexed by $1\leq i \leq f,1\leq j \leq e$. We say that objects as in either (1) or (2) have Hodge type $\mu$ if the $ij$-th filtered vector space has type $\mu_{ij}$. We also write
$$
d(\mu,\mu') = \sum_{ij} d(\mu_{ij},\mu'_{ij})
$$

\subsection{Period rings}
Let $\mathcal{O}_{C^\flat}$ denote the inverse limit of the system 
$$
\mathcal{O}_C/p \leftarrow \mathcal{O}_C/p \leftarrow \ldots
$$
whose transition maps are given by $x \mapsto x^p$. This is a domain in characteristic $p$ equipped with an action of $G_K$ induced by that on $\mathcal{O}_C/p$. Its field of fractions $C^\flat$ is algebraically closed (and identifies non-canonically with the completed algebraic closure of $k((u))$). Hence $A_{\operatorname{inf}} = W(\mathcal{O}_{C^\flat})$ and $W(C^\flat)$ admit $G_K$-actions as well as the Witt vector Frobenius. The compatible system $\pi^{1/p^\infty}$ gives rise to an element $\pi^\flat \in \mathcal{O}_{C^\flat}$. Via this choice we embed $\mathfrak{S} = W(k)[[u]] \rightarrow A_{\operatorname{inf}}$ by $u \mapsto [\pi^\flat]$. This embedding is $\varphi$-equivariant when $\mathfrak{S}$ is equipped with the Frobenius which on $W(k)$ is the Witt vector Frobenius and which sends $u \mapsto u^p$. It is also $G_{K_\infty}$-equivariant when $\mathfrak{S}$ is equipped with the trivial $G_{K_\infty}$-action.

\subsection{Crystalline representations}\label{crys}
A continuous representation of $G_K$ on a finite dimensional $E$-vector space $V$ is crystalline if $D_{\operatorname{crys}}(V) := (V \otimes_{\mathbb{Q}_p} B_{\operatorname{crys}})^{G_K}$ has $K_0$-dimension equal to the $\mathbb{Q}_p$-dimension of $V$. In this case $D_{\operatorname{crys}}(V)$ is a finite free $K_0 \otimes_{\mathbb{Q}_p} E$-module of rank equal to the $E$-dimension of $V$. We write $D_{\operatorname{crys},K}(V) = D_{\operatorname{crys}}(V) \otimes_{K_0} K$ which is equipped with the filtration given by
$$
\operatorname{Fil}^nD_{\operatorname{crys},K}(V) = (V \otimes_{\mathbb{Q}_p} t^iB_{\operatorname{dR}}^+)^{G_K}
$$
and we say $V$ has Hodge type $\mu$ is $D_{\operatorname{crys},K}(V)$ has Hodge type $\mu$. Note that our normalisations are such that the cyclotomic-character has Hodge type $-1$.
\section{Moduli of Breuil--Kisin modules}\label{moduli}

\subsection{Basic definitions}

For any $\mathcal{O}$-algebra $A$ set $\mathfrak{S}_A = \mathfrak{S} \otimes_{\mathbb{Z}_p} A$ and write $\varphi$ for the $A$-linear extension of $\varphi$ on $\mathfrak{S}$. Recall also the elements $E_{j}(u) = \prod_{i=1}^f E_{ij}(u)$ in $W(k)[u] \otimes_{\mathbb{Z}_p} \mathcal{O} \subset \mathfrak{S}_{\mathcal{O}}$ from Section~\ref{coeff}. In this paper only the case of $A$ finite over $\mathcal{O}$ will be relevant.

\begin{definition}
	Consider integers $h_j \geq 0$ for $j=1,\ldots,e$. A Breuil--Kisin module over $A$ of height $\leq h_j$ is a finite projective $\mathfrak{S}_A$-module $\mathfrak{M}$ equipped with an $\mathfrak{S}_A$-linear homomorphism
	$$
	\mathfrak{M} \otimes_{\mathfrak{S},\varphi} \mathfrak{S} \rightarrow \mathfrak{M}
	$$
	with cokernel killed by $\prod_{j=1}^e E_j(u)^{h_j}$. 
\end{definition}

For any such $\mathfrak{M}$ write $\mathfrak{M}^\varphi$ for the image of this homomorphism and $\varphi(\mathfrak{M})$ for the image of the composite
$\mathfrak{M} \rightarrow \mathfrak{M} \otimes_{\mathfrak{S},\varphi} \mathfrak{S} \rightarrow \mathfrak{M}$ with the first map given by $m \mapsto m \otimes 1$. Note that $\varphi(\mathfrak{M})$ is a $\varphi(\mathfrak{S}_A)$-module which generates $\mathfrak{M}^\varphi$ over $\mathfrak{S}_A$.

\begin{lemma}\label{projective}
 	If $A$ is $\mathcal{O}$-finite and $\mathfrak{M}$ is a Breuil--Kisin module over $A$ of height $\leq h_j$ then both $\mathfrak{M}/\mathfrak{M}^\varphi$ and $\mathfrak{M}^\varphi/\prod E_j(u)^{h_j}\mathfrak{M}$ are finite projective $A$-modules.
\end{lemma}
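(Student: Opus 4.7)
The plan is to prove $A$-projectivity of $\mathfrak{M}/\mathfrak{M}^\varphi$ first, and then to deduce $A$-projectivity of $\mathfrak{M}^\varphi/\prod E_j(u)^{h_j}\mathfrak{M}$ from the short exact sequence
$$
0 \to \mathfrak{M}^\varphi/\textstyle\prod E_j(u)^{h_j}\mathfrak{M} \to \mathfrak{M}/\textstyle\prod E_j(u)^{h_j}\mathfrak{M} \to \mathfrak{M}/\mathfrak{M}^\varphi \to 0
$$
whose middle term is readily seen to be $A$-projective.

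First I would use the decomposition $\mathfrak{S}_A \cong \prod_i A[[u]]$ (from $W(k)\otimes_{\mathbb{Z}_p} A \cong \prod_i A$) to observe that in each factor the element $f := \prod_j E_j(u)^{h_j}$ is a monic polynomial in $u$ of positive degree. Hence $f$ is a non-zerodivisor in $\mathfrak{S}_A$ and $\mathfrak{S}_A/f$ is a finite free $A$-module. Since $\mathfrak{M}/f\mathfrak{M}$ is finite projective over $\mathfrak{S}_A/f$, it is therefore finite projective over $A$; this handles the middle term of the displayed sequence.

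Second, I would establish the key fact that the map $\alpha : \varphi^*\mathfrak{M} \to \mathfrak{M}$ is injective. Since $\varphi^*\mathfrak{M}$ is $\mathfrak{S}_A$-flat and $f$ is a non-zerodivisor, $\varphi^*\mathfrak{M}$ is $f$-torsion-free, so it suffices to show $\alpha[1/f]$ is an isomorphism. The cokernel hypothesis already gives that $\alpha[1/f]$ is surjective. The Witt vector Frobenius merely permutes the factors of $\mathfrak{S}_A = \prod_i A[[u]]$ (acting as $u\mapsto u^p$ in each), so $\varphi^*\mathfrak{M}$ has the same locally constant rank function as $\mathfrak{M}$ up to this permutation; summing the resulting rank inequalities then forces equality on each component, so a surjection between the two must be an isomorphism. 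Therefore $\mathfrak{M}^\varphi \cong \varphi^*\mathfrak{M}$ is itself finite projective over $\mathfrak{S}_A$.

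Third, I would prove $A$-flatness of $\mathfrak{M}/\mathfrak{M}^\varphi$. Because $\mathfrak{M}/\mathfrak{M}^\varphi = \operatorname{coker}(\alpha)$ commutes with arbitrary base change in $A$, applying the second step to $A/I$ shows that
$$
0 \to \mathfrak{M}^\varphi \to \mathfrak{M} \to \mathfrak{M}/\mathfrak{M}^\varphi \to 0
$$
remains exact after $\otimes_A A/I$ for every ideal $I \subset A$. This forces $\operatorname{Tor}_1^A(A/I,\mathfrak{M}/\mathfrak{M}^\varphi)=0$, hence $A$-flatness. Since $\mathfrak{M}/\mathfrak{M}^\varphi$ is a quotient of the $A$-finite module $\mathfrak{M}/f\mathfrak{M}$, it is finitely generated, and $A$ being Noetherian, finite plus flat yields $A$-projectivity. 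The displayed short exact sequence then splits and exhibits $\mathfrak{M}^\varphi/\prod E_j(u)^{h_j}\mathfrak{M}$ as a direct summand of an $A$-projective module.

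The main obstacle is the injectivity of $\alpha$ in the second step: one must track the Frobenius-induced permutation of the connected components of $\operatorname{Spec}\mathfrak{S}_A$ in order to equate ranks, without assuming that $A$ itself is $\mathcal{O}$-flat (which would make the argument routine).
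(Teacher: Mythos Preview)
Your argument is correct, and the overall reduction via the short exact sequence is exactly what the paper does. The only difference is that where you give a self-contained proof that $\mathfrak{M}/\mathfrak{M}^\varphi$ is finite projective over $A$ (injectivity of $\alpha$ by the rank-permutation argument, then flatness by checking that $\alpha$ stays injective after any base change $A\to A/I$), the paper simply cites \cite[3.4.1]{CEGS19} for this fact. Your direct argument is the standard one and is essentially what lies behind that citation; it has the advantage of being self-contained and of making explicit the point (used tacitly in the paper) that the middle term $\mathfrak{M}/\bigl(\prod E_j(u)^{h_j}\bigr)\mathfrak{M}$ is $A$-projective because $\prod E_j(u)^{h_j}$ is monic in each factor of $\mathfrak{S}_A\cong\prod_i A[[u]]$.
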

\begin{proof}
From the exact sequence
	$$
	0 \rightarrow \mathfrak{M}^\varphi /(\prod_{i=1}^e E_j(u)^{h_j})\mathfrak{M} \rightarrow \mathfrak{M}/(\prod_{i=1}^e E_j(u)^{h_j})\mathfrak{M} \rightarrow \mathfrak{M}/\mathfrak{M}^\varphi \rightarrow 0
	$$
	it is enough to consider $\mathfrak{M}/\mathfrak{M}^\varphi$. That this is finite projective over $A$ is proven in \cite[3.4.1]{CEGS19}.
\end{proof}
\subsection{Recalling a construction of Kisin}\label{recallkisin}
Now fix a continuous representation of $G_K$ on a finite dimension $\mathbb{F}$-vector space $V_{\mathbb{F}}$ together with a choice of $\mathbb{F}$-basis and let $R = R_{V_{\mathbb{F}}} = R_{V_{\mathbb{F}}}^\square \otimes_{W(k)} \mathcal{O}$ denote the corresponding $\mathcal{O}$-framed deformation ring. Write $V_R$ for the universal deformation and for any homomorphism $\alpha:R \rightarrow A$ write $V_A = V_{\alpha} =V_R \otimes_{\alpha, R} A$.

\begin{construction}
	For each $h_j\geq 0$ and each Artin $\mathcal{O}$-algebra with finite residue field set $\mathcal{L}^{\leq h_j}(A)$ equal to the set pairs $(\mathfrak{M},\alpha)$ where $\alpha: R \rightarrow A$ is a homomorphism and $\mathfrak{M}$ is a  $G_{K_\infty}$-stable $\mathfrak{S}_A$-submodule of $V_A \otimes_{\mathbb{Z}_p} W(C^\flat)$ with
	\begin{equation}\label{ident}
	\mathfrak{M} \otimes_{\mathfrak{S}}W(C^\flat) = V_A \otimes_{\mathbb{Z}_p} W(C^\flat)
\end{equation}
	and for which the semilinear extension of the trivial Frobenius on $V_A$ makes $\mathfrak{M}_A$ into a Breuil--Kisin module over $A$ of height $\leq h_j$.
	Base-change along $A$ make this into a functor on Artin $\mathcal{O}$-algebras.
\end{construction}

For any Hodge type $\mu$ let $R^{\mu}$ denote the unique $\mathcal{O}$-flat reduced quotient of $R$ with the property that a homomorphism $R \rightarrow B$ into a finite $E$-algebra factors through $R^\mu$ if and only if $V_B$ is crystalline of Hodge type $\mu$. The existence of such a quotient is the main result of \cite{Kis08}. If we assume $\mu$ is concentrated in degrees $[0,h_j]$ then have the following:

\begin{proposition}[Kisin]\label{kisin}
	The functor $A \mapsto \mathcal{L}^{\leq h_j}(A)$ is represented by a scheme $\mathcal{L}_R^{\leq h_j}$ and the morphism $\Theta:\mathcal{L}^{\leq h_j}_R \rightarrow \operatorname{Spec}R$ given by $(\mathfrak{M},\alpha) \mapsto \alpha$ is projective. Furthermore $\Theta[\frac{1}{p}]$ is a closed immersion and $\operatorname{Spec}R^\mu \rightarrow \operatorname{Spec}R$ factors through the scheme-theoretic image of $\Theta$.
\end{proposition}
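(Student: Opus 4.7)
The plan is to realise $\mathcal{L}^{\leq h_j}_R$ as a closed subscheme of a projective $R$-scheme coming from an affine Grassmannian, and then to use Kisin's equivalence between Breuil--Kisin modules and étale $\varphi$-modules to control the behaviour after inverting $p$.

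For representability and projectivity, I would pass from the $W(C^\flat)$ formulation to étale $\varphi$-modules. The universal representation $V_R$ carries a $G_{K_\infty}$-action and so gives a tautological étale $(\varphi, \mathcal{O}_{\mathcal{E}})$-module $M(V_R) := V_R \otimes_{\mathbb{Z}_p} \mathcal{O}_{\mathcal{E}}$, where $\mathcal{O}_{\mathcal{E}}$ denotes the $p$-adic completion of $\mathfrak{S}[1/u]$. By the fully faithfulness of $\mathfrak{M} \mapsto \mathfrak{M}[1/u]$ on Breuil--Kisin modules of bounded height, specifying a point of $\mathcal{L}^{\leq h_j}(A)$ is equivalent to specifying an $\mathfrak{S}_A$-lattice inside $M(V_R) \otimes_R A$ that is stable under the induced Frobenius with cokernel killed by $\prod_{j} E_j(u)^{h_j}$. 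The height bound sandwiches all such lattices between two reference lattices, so the parameter space becomes a closed subfunctor of a finite-dimensional Grassmannian bundle over $\operatorname{Spec} R$. Since Frobenius-stability and the cokernel condition are closed, this realises $\mathcal{L}^{\leq h_j}_R$ as a closed subscheme of a projective $R$-scheme.

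For $\Theta[1/p]$ being a closed immersion, the key input is Kisin's fully faithfulness theorem after inverting $p$: over any finite $E$-algebra $B$ and any $V_B$, there is at most one Breuil--Kisin $\mathfrak{M}$ of height $\leq h_j$ with $\mathfrak{M}[1/u] = M(V_B)$. Thus $\Theta[1/p]$ is a monomorphism, and being a monomorphism plus proper (as the base change of the projective $\Theta$) forces it to be a closed immersion. For the factorisation of $\operatorname{Spec} R^\mu$ through the scheme-theoretic image, since $R^\mu$ is reduced and $\mathcal{O}$-flat the factorisation can be checked after inverting $p$; there Kisin's main result in \cite{Kis08} produces, for every crystalline $V_B$ of Hodge type $\mu$ with $\mu_{ij} \subset [0,h_j]$ for all $i,j$, a Breuil--Kisin module of height $\leq h_j$, which is the required lift of the $B$-point of $\operatorname{Spec} R^\mu$ to $\mathcal{L}^{\leq h_j}_R$.

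The main technical obstacle is reconciling the $W(C^\flat)$ formulation of $\mathcal{L}^{\leq h_j}$ with the étale $\varphi$-module description used to realise it as a finite-type scheme; this is exactly the content of Kisin's equivalence between $G_{K_\infty}$-stable $\mathfrak{S}_A$-lattices in $V_A \otimes_{\mathbb{Z}_p} W(C^\flat)$ and suitable $\mathfrak{S}_A$-lattices inside $V_A \otimes_{\mathbb{Z}_p} \mathcal{O}_{\mathcal{E}}$. Once this dictionary is in hand, representability inside a Grassmannian, projectivity from the bounded-height sandwich, and the closed-immersion and factorisation arguments after inverting $p$ all become essentially formal.
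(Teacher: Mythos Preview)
Your sketch is essentially a reconstruction of Kisin's original argument in \cite{Kis08} (embedding into an affine Grassmannian of \'etale $\varphi$-modules, uniqueness giving the closed immersion after inverting $p$, existence giving the factorisation). The paper takes a more economical route: it cites Kisin directly for the constant-height case $h_j=h$, and then realises $\mathcal{L}^{\leq h_j}_R$ as the closed subscheme of $\mathcal{L}^{\leq h}_R$ cut out by the condition $(\prod_j E_j(u)^{h_j})\mathfrak{M}\subset\mathfrak{M}^\varphi$. That this is indeed closed uses that $A\mapsto (\prod_j E_j(u)^{h_j})\mathfrak{M}/E(u)^h\mathfrak{M}$ and $A\mapsto \mathfrak{M}/\mathfrak{M}^\varphi$ extend to locally free coherent sheaves on $\mathcal{L}^{\leq h}_R$, which is where Lemma~\ref{projective} enters. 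Your approach redoes more of Kisin's construction from scratch; the paper's approach isolates exactly the new content (the refined, embedding-by-embedding height bound) and handles only that.

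There is one point you pass over too quickly. For the factorisation of $\operatorname{Spec}R^\mu$, Kisin's result in \cite{Kis08} produces, for crystalline $V_B$ with weights in $[0,h]$, a Breuil--Kisin module whose cokernel is killed by $E(u)^h$. What you need is the refined statement that $\prod_j E_j(u)^{h_j}$ kills the cokernel when $\mu_{ij}\subset[0,h_j]$ for each $i,j$; this is not literally what \cite{Kis08} states. The paper flags this explicitly and defers it to part~(4) of Lemma~\ref{filproperties} (resting on \cite{GLS15}). In characteristic~$0$ the refinement amounts to matching the $E_{ij}(u)$-adic decomposition of $\mathfrak{M}[\tfrac{1}{p}]/\mathfrak{M}^\varphi[\tfrac{1}{p}]$ with the decomposition of $D_K/\operatorname{Fil}^0 D_K$ along embeddings, which is not deep but is an argument you have not supplied.
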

\begin{proof}
	When each $h_j =h$ then this follows from \cite{Kis08} (in particular see 1.5.1 and 1.6.4 of \emph{loc. cit.}). The construction of $\mathcal{L}^{\leq h}_R$ also shows that for any $h_j \leq h$ both $A\mapsto (\prod_{i=1}^e E_j(u)^{h_j})\mathfrak{M}/E(u)^h\mathfrak{M}$ and $A\mapsto \mathfrak{M}/\mathfrak{M}^\varphi$ extend to shaves of $\mathcal{O}_{\mathcal{L}^{\leq h}}\otimes_{\mathbb{Z}_p} \mathfrak{S}$-modules which are coherent as $\mathcal{O}_{\mathcal{L}^{\leq h}_R}$-modules. Lemma~\ref{projective} shows they are also locally free. As a consequence, the locus of $\mathcal{L}^{\leq h}_R$ over which $(\prod_{i=1}^e E_j(u)^{h_j}) \mathfrak{M} \subset \mathfrak{M}^\varphi$ is closed. This is precisely $\mathcal{L}^{\leq h_j}_R$. It remains only to show that if $\mu$ is concentrated in degrees $[0,h_j]$ then it factors through $\mathcal{L}^{\leq h_j}_R$. This follows from part (4) of Lemma~\ref{filproperties} which is proven in Section~\ref{filonphi}.
\end{proof}

An easy limit argument shows that the description of the $A$-points of $\mathcal{L}_R^{\leq h_j}$ is valid whenever $A$ is a finite $\mathcal{O}$-algebra (i.e. not necessarily Artinian).

\subsection{A convolution variant} 
	We now produce a variant of $\mathcal{L}^{\leq h_j}_R$. For any Artin $R$-algebra $A$ set $\mathcal{L}^{\leq h_j,\operatorname{conv}}(A)$ equal to the set of triples $(\mathfrak{M},\alpha,\mathcal{F})$ for which $(\mathfrak{M},\alpha) \in \mathcal{L}^{\leq h_j}(A)$ and $\mathcal{F}$ is a sequence of $\mathfrak{S}_{A}$-submodules
	\begin{equation}\label{filtration}
	\left( \prod_{j=1}^e E_j(u)^{h_j} \right) \mathfrak{M} = \mathcal{F}^e \subset \ldots \subset \mathcal{F}^1 \subset \mathcal{F}^0 = \mathfrak{M}^\varphi
\end{equation}
	with $E_j(u)^{h_j}\mathcal{F}^j \subset \mathcal{F}^j \subset \mathcal{F}^{j-1}$ and $\mathcal{F}^{i-1}/\mathcal{F}^i$ finite projective over $A$ for each $j$.

\begin{proposition}\label{conv}
	The functor $A \mapsto \mathcal{L}^{\leq h_j,\operatorname{conv}}(A)$ is represented by a scheme $\mathcal{L}^{\leq h_j,\operatorname{conv}}_R$. Furthermore, the morphism $\mathcal{L}^{\leq h_j,\operatorname{conv}}_R \rightarrow \mathcal{L}^{\leq h_j}_R$ given by $(\mathfrak{M},\alpha,\mathcal{F}) \mapsto (\mathfrak{M},\alpha)$ is projective and, after inverting $p$, becomes a closed immersion through which the reduced subscheme of $\mathcal{L}^{\leq h_j}_{R}[\frac{1}{p}]$ factors.
\end{proposition}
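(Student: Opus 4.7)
The plan has three steps: represent the functor as a closed subscheme of a tower of relative Grassmannians over $\mathcal{L}^{\leq h_j}_R$ (yielding representability and projectivity), show uniqueness of the filtration after inverting $p$ (giving the closed-immersion statement), and show existence of this filtration (giving the reduced-subscheme factoring).

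First, by Lemma~\ref{projective} the coherent sheaf $\overline{\mathfrak{M}}^\varphi := \mathfrak{M}^\varphi/\prod_j E_j(u)^{h_j}\mathfrak{M}$ is locally free on $\mathcal{L}^{\leq h_j}_R$. The data of $\mathcal{F}^\bullet$ is equivalent to a descending flag $\overline{\mathcal{F}}^\bullet$ in $\overline{\mathfrak{M}}^\varphi$ with $\overline{\mathcal{F}}^0 = \overline{\mathfrak{M}}^\varphi$, $\overline{\mathcal{F}}^e = 0$, and each subquotient $\overline{\mathcal{F}}^{j-1}/\overline{\mathcal{F}}^j$ locally free over $A$. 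Choosing the $\overline{\mathcal{F}}^j$ one at a time, each step parametrizes an $A$-projective quotient of $\overline{\mathcal{F}}^{j-1}$, represented by a relative Quot scheme. The additional constraints that each $\overline{\mathcal{F}}^j$ be an $\mathfrak{S}_A$-submodule and satisfy $E_j(u)^{h_j}\overline{\mathcal{F}}^{j-1}\subset\overline{\mathcal{F}}^j$ cut out closed subschemes, producing $\mathcal{L}^{\leq h_j,\operatorname{conv}}_R$ as a projective scheme over $\mathcal{L}^{\leq h_j}_R$.

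For the closed-immersion claim after inverting $p$, since a proper monomorphism is a closed immersion it suffices to show $\mathcal{F}^\bullet$ is uniquely determined by $(\mathfrak{M},\alpha)$ after inverting $p$. The polynomials $E_1(u),\ldots,E_e(u)$ are pairwise coprime in $\mathfrak{S}\otimes_{\mathbb{Z}_p} E \cong \prod_i E[[u]]$: the image of $E_j(u)$ in the $i$-th factor is the degree-one polynomial $u-\kappa_{ij}(\pi)$, and distinct $j$ produce distinct roots. Applying CRT to $\mathfrak{S}_A[\frac{1}{p}]/\prod_j E_j(u)^{h_j}$ yields a canonical decomposition
\[
\overline{\mathfrak{M}}^\varphi\bigl[\tfrac{1}{p}\bigr] = \bigoplus_{j=1}^e M_j
\]
on which $E_{j'}(u)$ acts invertibly on $M_j$ for $j'\neq j$. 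Decomposing $\overline{\mathcal{F}}^\bullet$ accordingly, the inclusion $E_j(u)^{h_j}\overline{\mathcal{F}}^{j-1}\subset\overline{\mathcal{F}}^j\subset\overline{\mathcal{F}}^{j-1}$ forces $\overline{\mathcal{F}}^{j-1}_{j'} = \overline{\mathcal{F}}^j_{j'}$ for all $j'\neq j$, and combined with the boundary conditions $\overline{\mathcal{F}}^0 = \overline{\mathfrak{M}}^\varphi$, $\overline{\mathcal{F}}^e = 0$ this pins down $\overline{\mathcal{F}}^j = \bigoplus_{j'>j} M_{j'}$. This argument gives existence and uniqueness simultaneously: the same candidate provides a valid lift for every $A$-point of $\mathcal{L}^{\leq h_j}_R[\frac{1}{p}]$, so the closed immersion is surjective on underlying spaces, and hence $\mathcal{L}^{\leq h_j}_R[\frac{1}{p}]_{\operatorname{red}}$ factors through it.

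The main obstacle I anticipate is bookkeeping in the iterated Quot construction — verifying that the $\mathfrak{S}_A$-module and annihilation constraints are indeed closed conditions behaving well under base change, and that the resulting functor agrees with the naive one on Artin rings before extending by the limit argument used in the proof of Proposition~\ref{kisin}. The rigidity half of the argument is conceptually transparent once coprimality of the $E_j$ after inverting $p$ is isolated.
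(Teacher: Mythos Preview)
Your proof is correct and follows essentially the same strategy as the paper: representability via an iterated Grassmannian in the locally free sheaf $\mathfrak{M}^\varphi/\prod_j E_j(u)^{h_j}\mathfrak{M}$, and rigidity after inverting $p$ via the coprimality of the $E_j(u)$. Your execution is in fact slightly more direct than the paper's: you run the CRT argument at the sheaf level to obtain uniqueness and existence of the filtration simultaneously (indeed giving an isomorphism, not merely a closed immersion plus reduced factoring), whereas the paper first reduces the injectivity check to finite flat $\mathcal{O}$-algebras and then handles the reduced-subscheme statement separately by checking bijectivity on DVR-points and invoking \cite[7.2.5]{LLBBM20}.
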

\begin{proof}
	The representability of $\mathcal{L}^{\leq h_j,\operatorname{conv}}$ follows from the observation made in the proof of Proposition~\ref{kisin} that $A\mapsto \mathfrak{M}^\varphi/(\prod E_j(u)^{h_j})\mathfrak{M}$ extends to a coherent locally free sheaf on $\mathcal{L}^{\leq h_j}_R$. Indeed, this shows that $\mathcal{L}^{\leq h_j,\operatorname{conv}}$ can be constructed as a succession of  extensions of Grassmannians over $\mathcal{L}^{\leq h_j}_R$.
	
	To show that $\mathcal{L}^{\leq h_j,\operatorname{conv}}_R \rightarrow \mathcal{L}^{\leq h_j}_R$ becomes a closed immersion after inverting $p$ it suffices, since this morphism is proper, to show the induced map on $B$-valued points is injective for any finite $E$-algebra $B$. As explained in e.g. the first paragraph of the proof of \cite[1.6.4]{Kis08}, any $B$-valued point is induced from an $A$-valued point for some finite flat $\mathcal{O}$-algebra $A$. Thus, it suffices to show that for any such $A$ and any $A$-valued point $(\mathfrak{M},\alpha)$ of $\mathcal{L}^{\leq h_j}_R$ there exists at most one filtration $\mathcal{F}$ as in \eqref{filtration}. Since $\mathfrak{S}_E$ is a principal ideal domain and the $E_j(u)$ are pairwise coprime 
	$$
	\mathfrak{S}_E/(\prod E_j(u)^{h_j}) \cong \prod \mathfrak{S}_E /E_j(u)^{h_j}
	$$ 
	From this it is immediate that after inverting $p$ a unique such filtration $\mathcal{F}'$ exists. Then $\mathcal{F}^i \subset \mathcal{F}'^i$ and $\mathcal{F}^i[\frac{1}{p}] =\mathcal{F}'$. This implies $\mathcal{F}^i = \mathcal{F}'^i \cap \mathfrak{M}^\varphi$, since $\mathfrak{M}^\varphi/\mathcal{F}^i$ is $p$-torsionfree, and so $\mathcal{F}$ is unique as claimed. 
	
	Note that if $A$ is the ring of integers in a finite extension of $E$ then $\mathcal{F}'^i \cap \mathfrak{M}^\varphi$ defines a filtration as in \eqref{filtration} because the graded pieces, being $p$-torsionfree, are $A$-projective (this need not be the case for more general $A$). This shows that $\mathcal{L}^{\leq h_j,\operatorname{conv}}_R \rightarrow \mathcal{L}^{\leq h_j}_R$ induces a bijection on $A$-valued points for such $A$. It follows from e.g. \cite[7.2.5]{LLBBM20} that $\mathcal{L}^{\leq h_j,\operatorname{conv}}_R \rightarrow \mathcal{L}^{\leq h_j}_R$ induces an isomorphism on underlying reduced subschemes.
\end{proof}

\begin{remark}
	In \cite[2.2.16]{B19} it is explained how $\mathcal{L}^{\mu}_R \rightarrow \operatorname{Spec}R^\mu$ induces a bijection on $\mathcal{O}'$-valued points when $\mathcal{O}'$ is the ring of integers inside a finite extension $E'/E$. The last paragraph of the previous proof shows that the same is true with $\mathcal{L}^\mu_R$ replaced by $\mathcal{L}^{\mu,\operatorname{conv}}_R$. We emphasise that this is not true when $\mathcal{O}'$ is an arbitrary finite flat $\mathcal{O}$-algebra (otherwise $\mathcal{L}^{\mu,\operatorname{conv}}_R \rightarrow \operatorname{Spec}R^\mu$ would be an isomorphism).
\end{remark}
Since $\operatorname{Spec}R^{\mu}[\frac{1}{p}]$ is reduced Proposition~\ref{conv} implies that it can be viewed as a closed subscheme of $\mathcal{L}^{\leq h_j,\operatorname{conv}}_R[\frac{1}{p}]$. This allows us to make the following definition.
\begin{definition}
	For any Hodge type $\mu$ concentrated in degrees $[0,h_j]$ define $\mathcal{L}^{\mu,\operatorname{conv}}_R$ as the closure of $\operatorname{Spec}R^{\mu}[\frac{1}{p}]$ in $\mathcal{L}^{\leq h_j,\operatorname{conv}}_R$ 
\end{definition}

The main object of this paper is to describe the local geometry of $\mathcal{L}^{\mu,\operatorname{conv}}_R$ under the assumptions from Definition~\ref{introdef}.

\begin{theorem}[Main theorem]\label{main}
	Assume that 
	\begin{itemize}
		\item $\mu$ is pseudo-Barsotti--Tate, i.e. that $\mu$ is concentrated in degrees $[0,h_j]$ for $h_1=p$ and $h_2=\ldots=h_e =1$
		\item For any $G_{K_\infty}$-stable submodule $V \subset V_{\mathbb{F}}$ which is unramified there exists no $G_{K_\infty}$-equivariant quotient $V_{\mathbb{F}} \rightarrow W$ with $W \cong V \otimes \mathbb{F}(-1)$.
	\end{itemize}
	Then the local rings of $\mathcal{L}^{\mu,\operatorname{conv}}$ at closed points are formally smooth over $\mathbb{Z}_p$. 
\end{theorem}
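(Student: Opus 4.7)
My plan is to reduce the statement to a tangent space bound at each closed point $x$, and then to compute that bound via a structural analysis of mod-$p$ filtered Breuil--Kisin modules. First, by definition $\mathcal{L}^{\mu,\operatorname{conv}}_R$ is the scheme-theoretic closure of $\operatorname{Spec}R^\mu[1/p]$ in $\mathcal{L}^{\leq h_j,\operatorname{conv}}_R$, hence is $\mathbb{Z}_p$-flat. Consequently the complete local ring $A_x$ at a closed point $x$ is $\mathbb{Z}_p$-flat, complete, Noetherian, with residue field finite over $\mathbb{F}_p$; for such a ring, formal smoothness over $\mathbb{Z}_p$ is equivalent to regularity, and since $p$ is a non-zerodivisor in $A_x$ regularity will follow from the inequality
\begin{equation*}
\dim_k T_x\!\left( \mathcal{L}^{\mu,\operatorname{conv}}_R \otimes_{\mathbb{Z}_p} \mathbb{F}_p \right) \;\leq\; \dim A_x[1/p].
\end{equation*}
By Proposition~\ref{conv} and the subsequent remark, the right-hand side coincides with the corresponding local dimension of $\operatorname{Spec}R^\mu[1/p]$, supplied by \cite{Kis08} in terms of $\dim R$ and the invariant $d(\mu,\mu)$ of Section~\ref{Hodgetype}. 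The problem thus reduces to upper-bounding the mod-$p$ tangent space at $x$ by this known quantity.

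Second, I will represent an arbitrary mod-$p$ tangent vector at $x$ as the reduction of an $A$-valued point of $\mathcal{L}^{\mu,\operatorname{conv}}_R$ for some finite flat local $\mathcal{O}$-algebra $A$. This is available because $\mathcal{L}^{\mu,\operatorname{conv}}_R$ is $\mathbb{Z}_p$-flat and is the closure of its generic fibre, whose points are explicitly described in Proposition~\ref{conv}. Such an $A$-valued point unfolds into a triple $(\mathfrak{M},\alpha,\mathcal{F})$ with $\mathfrak{M}$ a Breuil--Kisin module of height $\leq h_j$ over $A$, $\mathcal{F}^\bullet$ a filtration of $\mathfrak{M}^\varphi$ as in \eqref{filtration}, and associated Galois representation on $V_A[1/p]$ crystalline of Hodge type $\mu$.

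Third, I will forget the $A$-linear structure and examine the reduction $\bar{\mathfrak{M}} := \mathfrak{M} \otimes_A k$, a filtered Breuil--Kisin module over $k$ of height $\leq h_j$ arising from a crystalline representation of Hodge type $\mu$. The pseudo-Barsotti--Tate condition $h_1=p$, $h_j=1$ for $j\geq 2$ is precisely the range in which I expect a rank-$>2$ generalization of \cite{GLS15}---which in rank $2$ rigidly classifies the isomorphism class of $\bar{\mathfrak{M}}$---to produce a complete flag of $\bar{\mathfrak{M}}$ by sub-Breuil--Kisin modules compatible with $\mathcal{F}^\bullet$ whose rank-one graded pieces take an explicit shape. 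The cyclotomic-freeness hypothesis on $V_{\mathbb{F}}$ enters exactly here to rule out the pathological Jordan--Holder constituents (involving the inverse cyclotomic character) that would otherwise obstruct the existence of such a flag.

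Finally, once the flag is in place, the space of $k[\epsilon]$-deformations of $\bar{\mathfrak{M}}$ as a filtered Breuil--Kisin module decomposes into a succession of extension problems between the rank-one graded pieces, each of which is explicitly computable in terms of the $h_j$ and $\mu$. Summing these contributions should produce the required upper bound matching the generic fibre dimension identified in the first step. The hardest step will be the rank-$>2$ extension of \cite{GLS15}: in rank $2$ the normal form rests on an elementary-divisor argument over $\mathfrak{S}_k$ that does not transfer directly to higher rank, so its substitute must be an inductive argument along the filtration $\mathcal{F}^\bullet$, and it is in carrying out this induction that both the specific height bound $h_1=p$ and the cyclotomic-freeness assumption become indispensable.
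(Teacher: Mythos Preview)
Your first two steps match the paper exactly: reduce to a mod-$p$ tangent space bound via $\mathbb{Z}_p$-flatness and Kisin's dimension formula for $R^\mu[1/p]$, then lift tangent vectors to $A$-valued points for finite flat $\mathcal{O}$-algebras $A$.

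Steps three and four, however, contain genuine gaps. The higher-rank generalisation of \cite{GLS15} that the paper proves (Proposition~\ref{basis}) does \emph{not} produce a complete flag of $\bar{\mathfrak{M}}$ by sub-Breuil--Kisin modules. It produces a $\mathbb{Z}_p[[u]]$-basis $(e_i)$ of $\mathfrak{M}^\varphi$ with $e_i \equiv f_i \bmod \pi$ for $f_i \in \varphi(\mathfrak{M})$ and $\mathcal{F}^1$ generated by $E_1(u)^{r_i}e_i$. This places the reduction in a category $\operatorname{Mod}^{\operatorname{SD}}_{\mathcal{F}}$ of ``strongly divisible'' objects (Definition~\ref{SD}), but there is no flag; indeed without first passing to an unramified extension there is no reason for $V_{\mathbb{F}}$ to have one-dimensional Jordan--H\"older factors at all. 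The paper instead bounds $\operatorname{Ext}^1_{\operatorname{SD}}(\mathfrak{M}_x,\mathfrak{M}_x)$ directly, via an explicit two-term complex $\mathcal{C}_{\operatorname{SD}}$ whose Euler characteristic is computed by a d\'evissage $\mathfrak{M} \rightsquigarrow u^n\mathfrak{M}$ (Propositions~\ref{extdim} and~\ref{niceform}), with no decomposition into rank-one pieces.

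You also misplace the cyclotomic-freeness hypothesis. It plays no role in the strong-divisibility step (Proposition~\ref{hodgetypes} holds without it). It enters only in bounding the \emph{kernel} of $T_x \to \operatorname{Ext}^1_{\operatorname{SD}}(\mathfrak{M}_x,\mathfrak{M}_x)$, a contribution your outline does not address. Via Proposition~\ref{unique} and Lemma~\ref{cyclofree}, the hypothesis rules out nonzero $\varphi$-equivariant maps $\mathfrak{M}_x \to \gamma u^{(e+p-1)/(p-1)}\mathfrak{M}_x$, which forces any $\varphi$-equivariant endomorphism of $\mathfrak{M}_x$ to be $G_K$-equivariant after base change; this identifies the kernel with $\operatorname{Hom}(V_{\mathbb{F}},V_{\mathbb{F}})/\operatorname{Hom}_{\mathcal{F}}(\mathfrak{M}_x,\mathfrak{M}_x)$ and yields the bound $d^2 + d(\mu,\mu)$ (Proposition~\ref{bound}). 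Without this step the tangent space bound cannot be closed, regardless of how one computes the extension group.
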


In Lemma~\ref{cyclofree=>} we explain why condition (2) is satisfied whenever $V_{\mathbb{F}}$ is cyclotomic-free in the sense of Definition~\ref{introdef}.
\begin{proof}[Proof granting the results of Section~\ref{tgt}]
	Let $x \in \mathcal{L}^{\mu,\operatorname{conv}}_R$ any closed point. Enlarging $\mathbb{F}$ if necessary we can assume that $x$ is an $\mathbb{F}$-valued point. We show in Proposition~\ref{bound} below that the tangent space of $\mathcal{L}^{\mu,\operatorname{conv}}_R \otimes_{\mathcal{O}} \mathbb{F}$ at $x$ has dimension 
	$$
	\leq d^2 + d(\mu,\mu)
	$$
	(recall $d(\mu,\mu)$ is the value described in Section~\ref{hodgetypes}). On the other hand, in \cite[3.3.8]{Kis08} it is shown that $R^\mu[\frac{1}{p}]$ is equidimensional of dimension $d^2+d(\mu,\mu)$. The same is therefore true of $\mathcal{L}^{\mu,\operatorname{conv}}_R[\frac{1}{p}]$ and so, by flatness, also for $\mathcal{L}^{\mu,\operatorname{conv}}_R \otimes_{\mathcal{O}} \mathbb{F}$. This shows that the above inequality is an equality and that the local rings of $\mathcal{L}^{\mu,\operatorname{conv}}_R \otimes_{\mathcal{O}} \mathbb{F}$ at closed points are regular. Since the local rings of $\mathcal{L}^{\mu,\operatorname{conv}}_R$ are $\mathbb{Z}_p$-flat by definition it follows from \cite[07NQ]{stacks-project} that they are formally smooth over $\mathbb{Z}_p$.
\end{proof}
\begin{remark}
	One could also consider the closure of $\operatorname{Spec}R^{\mu}[\frac{1}{p}]$ in $\mathcal{L}^{\leq h_j}$. However the schemes obtained in this way typically fail to be regular. For example, it is shown in \cite{KisFF} that when $\mu$ is concentrated in degrees $[0,1]$ these spaces are smoothly equivalent to certain local models defined in \cite{PR09} which, while normal, are not necessarily smooth.
\end{remark}
\section{Strongly divisible extensions}

\subsection{Categories of mod~$p$ Breuil--Kisin modules}\label{categories}

Let $\operatorname{Mod}_{\mathcal{F}}$ denote the category whose objects are pairs $(\mathfrak{M},\mathcal{F})$ with $\mathfrak{M}$ a Breuil--Kisin module over $\mathbb{F}$ of any height $\geq 0$ and $\mathcal{F}$ a sequence of $\mathfrak{S}_{\mathbb{F}}$-submodules
$$
u^{e+p-1}\mathfrak{M} = \mathcal{F}^e  \subset \ldots \subset \mathcal{F}^1 \subset \mathcal{F}^0 = \mathfrak{M}^\varphi
$$
satisfying $u^p\mathcal{F}^0 \subset \mathcal{F}^1 \subset \mathcal{F}^0$ and $u\mathcal{F}^{i-1} \subset \mathcal{F}^i \subset \mathcal{F}^{i-1}$ for $i=2,\ldots,e$. Morphisms $\operatorname{Hom}_{\mathcal{F}}(\mathfrak{P},\mathfrak{M})$ in this category are $\varphi$-equivariant maps of $\mathfrak{S}_{\mathbb{F}}$-modules respecting the filtrations.

\begin{definition}\label{SD}
	Let $\operatorname{Mod}^{\operatorname{SD}}_{\mathcal{F}}$ denote the full subcategory of $\operatorname{Mod}_{\mathcal{F}}$ consisting of those $(\mathfrak{M},\mathcal{F})$ for which there exists an $\mathbb{F}_p[[u^p]]$-basis $(e_i)$ of $\varphi(\mathfrak{M})$ and integers $r_i\in [0,p]$ such that $\mathcal{F}^1$ is generated by $(u^{r_i}e_i)$.
\end{definition}

\begin{remark}
	When $e=1$ the category $\operatorname{Mod}^{\operatorname{SD}}_{\mathcal{F}}$ is precisely the category denoted in $\operatorname{Mod}^{\operatorname{SD}}_k$ and studied in \cite{B18}.
\end{remark}

The following is another interpretation of $\operatorname{Mod}^{\operatorname{SD}}_{\mathcal{F}}$. For $(\mathfrak{M},\mathcal{F}) \in \operatorname{Mod}_{\mathcal{F}}$ define 
\begin{itemize}
	\item $\operatorname{Fil}^i(\mathfrak{M}^\varphi) = \mathfrak{M}^\varphi \cap u^i \mathcal{F}^1$.
	\item $\operatorname{Fil}^i(\overline{\mathfrak{M}})$ equal to the image of $\operatorname{Fil}^i(\mathfrak{M}^\varphi)$ in $\overline{\mathfrak{M}} = \mathfrak{M}^\varphi /u\mathfrak{M}^\varphi$.
	\item $F^i(\mathfrak{M}) = \varphi(\mathfrak{M}) \cap u^i\mathcal{F}^1$
\end{itemize}

\begin{lemma}\label{equivalent}
	$(\mathfrak{M},\mathcal{F}) \in \operatorname{Mod}^{\operatorname{SD}}_{\mathcal{F}}$ if and only if image of the composite
	$$
	F^i(\mathfrak{M}) \rightarrow \mathfrak{M}^\varphi \rightarrow \overline{\mathfrak{M}}
	$$
	is $\operatorname{Fil}^n(\overline{\mathfrak{M}})$ for all $n$.
\end{lemma}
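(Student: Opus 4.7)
The plan is to reformulate both conditions in terms of an adapted $\mathfrak{S}_{\mathbb{F}}$-basis of $\mathfrak{M}^\varphi$. Since $\mathfrak{S}_{\mathbb{F}}$ is free of rank $p$ over $\varphi(\mathfrak{S}_{\mathbb{F}})$ with basis $1,u,\ldots,u^{p-1}$, and $\varphi(\mathfrak{S}_{\mathbb{F}})$ is in turn free of finite rank over $\mathbb{F}_p[[u^p]]$ (because $k\otimes_{\mathbb{F}_p}\mathbb{F}$ is étale over $\mathbb{F}_p$), an $\mathbb{F}_p[[u^p]]$-basis of $\varphi(\mathfrak{M})$ refines in a weight-preserving manner to an $\mathfrak{S}_{\mathbb{F}}$-basis $(e_i)$ of $\mathfrak{M}^\varphi$ with $e_i\in\varphi(\mathfrak{M})$, under which the SD condition becomes $\mathcal{F}^1 = \bigoplus_i u^{r_i}\mathfrak{S}_{\mathbb{F}} e_i$ for integers $r_i\in[0,p]$. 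Conversely, faithfully flat descent along $\varphi(\mathfrak{S}_{\mathbb{F}})\hookrightarrow\mathfrak{S}_{\mathbb{F}}$ reconstructs an $\mathbb{F}_p[[u^p]]$-basis of $\varphi(\mathfrak{M})$ from such an $(e_i)$, so it suffices to work with this reformulation.

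For the forward direction, given such $(e_i)$ and $r_i$, direct calculation yields $\operatorname{Fil}^n(\mathfrak{M}^\varphi) = \bigoplus_i u^{\max(0,n+r_i)}\mathfrak{S}_{\mathbb{F}} e_i$ and $F^n(\mathfrak{M}) = \bigoplus_i\bigl(\varphi(\mathfrak{S}_{\mathbb{F}})\cap u^{n+r_i}\mathfrak{S}_{\mathbb{F}}\bigr)e_i$. Reducing modulo $u$, the image of each is the $k\otimes_{\mathbb{F}_p}\mathbb{F}$-span of $\{\bar{e}_i : r_i+n\leq 0\}$: for the first this is immediate, and for the second one uses that $\varphi(\mathfrak{S}_{\mathbb{F}})\cap u^k\mathfrak{S}_{\mathbb{F}}$ contains $1$ precisely when $k\leq 0$.

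For the reverse direction, since $k\otimes_{\mathbb{F}_p}\mathbb{F}$ is a finite product of fields, the graded pieces of $\operatorname{Fil}^\bullet(\overline{\mathfrak{M}})$ are projective, so one can choose a $k\otimes_{\mathbb{F}_p}\mathbb{F}$-basis $(\bar{e}_i)$ of $\overline{\mathfrak{M}}$ graded for the filtration, with $\bar{e}_i\in\operatorname{Fil}^{-r_i}\setminus\operatorname{Fil}^{-r_i+1}$; the inclusion $u^p\mathcal{F}^0\subset\mathcal{F}^1$ forces $\operatorname{Fil}^{-p}(\overline{\mathfrak{M}}) = \overline{\mathfrak{M}}$, hence $r_i\in[0,p]$. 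The image hypothesis lifts each $\bar{e}_i$ to some $e_i\in F^{-r_i}(\mathfrak{M})\subset\varphi(\mathfrak{M})\cap u^{-r_i}\mathcal{F}^1$, and Nakayama (applied over the semilocal ring $\mathfrak{S}_{\mathbb{F}}$, noting that $\mathfrak{M}^\varphi\cong\varphi^*\mathfrak{M}$ is finite free) shows $(e_i)$ is an $\mathfrak{S}_{\mathbb{F}}$-basis of $\mathfrak{M}^\varphi$. The containment $u^{r_i}e_i\in\mathcal{F}^1$ gives $\sum u^{r_i}\mathfrak{S}_{\mathbb{F}} e_i\subset\mathcal{F}^1$; for the reverse inclusion, I would decompose $x = \sum c_ie_i\in\mathcal{F}^1$ and induct on $n\geq 0$ to show $c_i\in u^{\min(n,r_i)}\mathfrak{S}_{\mathbb{F}}$. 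At the inductive step, subtracting the already-controlled terms with $r_i\leq n$ and dividing the remainder by $u^n$ produces an element $\sum_{r_i>n} d_ie_i\in\operatorname{Fil}^{-n}(\mathfrak{M}^\varphi)$; its reduction in $\overline{\mathfrak{M}}$ involves only basis vectors $\bar{e}_i$ with $r_i>n$, yet lies in $\operatorname{Fil}^{-n}(\overline{\mathfrak{M}}) = \bigoplus_{r_j\leq n}(k\otimes_{\mathbb{F}_p}\mathbb{F})\bar{e}_j$, forcing each $\bar{d}_i = 0$ and hence $c_i\in u^{n+1}\mathfrak{S}_{\mathbb{F}}$.

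The main obstacle is this final inductive argument: it is precisely where the image hypothesis is used non-trivially, by pinning down the adapted basis $(\bar{e}_i)$ at each weight and forcing the vanishing of the reductions that upgrade the coefficient estimate from $u^n$ to $u^{n+1}$. Everything else reduces to bookkeeping in the chosen basis and the structural facts that $\mathfrak{S}_{\mathbb{F}}$ is free over $\varphi(\mathfrak{S}_{\mathbb{F}})$ and that $k\otimes_{\mathbb{F}_p}\mathbb{F}$ is a product of fields.
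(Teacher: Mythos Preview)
Your overall architecture matches the paper's: compute both $\operatorname{Fil}^n(\overline{\mathfrak{M}})$ and the image of $F^n(\mathfrak{M})$ explicitly in an adapted basis for the forward direction, and for the converse lift an adapted basis of $\overline{\mathfrak{M}}$ into $\varphi(\mathfrak{M})$ via the image hypothesis, then run a valuation-style induction to pin down $\mathcal{F}^1$. Your induction (upgrading $c_i\in u^{\min(n,r_i)}$ to $u^{\min(n+1,r_i)}$) is a reformulation of the paper's decreasing induction on the smallest $n$ with $u^n x\in\mathfrak{M}^\varphi$; the two are equivalent.

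There is, however, a genuine gap in your reformulation step. You assert that the SD condition is equivalent to the existence of an $\mathfrak{S}_{\mathbb{F}}$-basis $(e_i)$ of $\mathfrak{M}^\varphi$ lying in $\varphi(\mathfrak{M})$ with $\mathcal{F}^1=\bigoplus_i u^{r_i}\mathfrak{S}_{\mathbb{F}} e_i$ for integers $r_i$. This fails whenever $k\otimes_{\mathbb{F}_p}\mathbb{F}$ has more than one factor: already in rank one, $\mathcal{F}^1$ can have different $u$-valuations on different idempotent components of $\mathfrak{S}_{\mathbb{F}}$, so no single integer $r_1$ works. The same issue resurfaces in your converse argument when you try to choose a $k\otimes_{\mathbb{F}_p}\mathbb{F}$-basis of $\overline{\mathfrak{M}}$ adapted to the filtration: projectivity of the graded pieces over a product of fields does \emph{not} give you a graded basis over the product, only over each factor separately.

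The paper sidesteps this entirely by working over $\mathbb{F}_p$ rather than $\mathfrak{S}_{\mathbb{F}}$: it takes an $\mathbb{F}_p$-basis of $\overline{\mathfrak{M}}$ adapted to the filtration (which always exists, since $\overline{\mathfrak{M}}$ is just an $\mathbb{F}_p$-vector space), lifts to an $\mathbb{F}_p[[u^p]]$-basis of $\varphi(\mathfrak{M})$, and shows $\mathcal{F}^1$ is generated over $\mathbb{F}_p[[u]]$ by the $u^{r_i}e_i$. Your argument becomes correct verbatim once you replace $\mathfrak{S}_{\mathbb{F}}$ by $\mathbb{F}_p[[u]]$ and $k\otimes_{\mathbb{F}_p}\mathbb{F}$ by $\mathbb{F}_p$ throughout; alternatively you could decompose over the idempotents of $k\otimes_{\mathbb{F}_p}\mathbb{F}$ and allow the $r_i$ to be tuples, but this is messier than simply descending to $\mathbb{F}_p$.
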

\begin{proof}
	If $(\mathfrak{M},\mathcal{F}) \in \operatorname{Mod}^{\operatorname{SD}}_{\mathcal{F}}$ then choose $(e_i)$ and $(r_i)$ as in Definition~\ref{SD}. We see that $\operatorname{Fil}^n(\mathfrak{M}^\varphi)$ is generated over $\mathbb{F}_p[[u]]$ by $u^{\operatorname{min}\lbrace n+r_i,0\rbrace}e_i$. Therefore $\operatorname{Fil}^n(\overline{\mathfrak{M}})$ is generated by the images of those $e_i$ with $n+r_i \leq 0$. This shows that the image of the composite in the lemma surjects onto $\operatorname{Fil}^n(\overline{\mathfrak{M}})$.
	
	For the converse, choose an $\mathbb{F}_p$-basis of $\overline{\mathfrak{M}}$ adapted to the filtration, i.e. choose a basis $(\overline{e}_i)$ and integers $(r_i)$ so that $\operatorname{Fil}^n(\overline{\mathfrak{M}})$ is generated by those $\overline{e}_i$ for which $n+r_i \leq 0$. By assumption we can find $e_i \in \varphi(\mathfrak{M}) \cap u^{r_i}\mathcal{F}^1$ whose image in $\overline{\mathfrak{M}}$ is $\overline{e}_i$. Clearly any such choice of $e_i$ produces an $\mathbb{F}_p[[u^p]]$-basis of $\varphi(\mathfrak{M})$. To see that $\mathcal{F}^1$ is generated by $u^{r_i}e_i$ we argue by (decreasing) induction on the smallest integer $n$ such that $u^nx \in \mathfrak{M}^\varphi$. Since $u^nx \in \operatorname{Fil}^n(\mathfrak{M}^\varphi)$ we can write
	$$
	u^nx \equiv\sum_{r_i+n\leq 0} \alpha_i e_i ~\operatorname{mod}~ u\mathfrak{M}^\varphi
	$$
	and so $x \equiv \sum_{r_i \leq -n} \alpha_i u^{-n}e_i~ \operatorname{mod}~ u^{1-n}\mathfrak{M}^\varphi$. Using the inductive hypothesis we deduce $x$ can be expressed as an $\mathbb{F}_p[[u]]$-linear combination of the $u^{r_i}e_i$ as claimed.
\end{proof}
\subsection{Properties of exact sequences in $\operatorname{Mod}^{\operatorname{SD}}_{\mathcal{F}}$}

We say that a sequence of morphisms
$$
0 \rightarrow (\mathfrak{M},\mathcal{E}) \rightarrow( \mathfrak{N},\mathcal{F}) \rightarrow (\mathfrak{P},\mathcal{G}) \rightarrow 0
$$
in $\operatorname{Mod}_{\mathcal{F}}$ is exact if the induced sequences $0 \rightarrow \mathcal{E}^i \rightarrow \mathcal{F}^i \rightarrow \mathcal{G}^i \rightarrow 0$ are exact for all $i$ when viewed as sequences of $\mathfrak{S}_{\mathbb{F}}$-modules.

\begin{proposition}\label{SDext}
	Suppose $(\mathfrak{N},\mathcal{F}) \in \operatorname{Mod}^{\operatorname{SD}}_{\mathcal{F}}$.
	\begin{enumerate}
		\item Then $(\mathfrak{M},\mathcal{E})$ and $(\mathfrak{P},\mathcal{G}) \in \operatorname{Mod}_{\mathcal{F}}^{\operatorname{SD}}$.
		\item The induced sequences $0 \rightarrow \operatorname{gr}^i(\overline{\mathfrak{M}}) \rightarrow \operatorname{gr}^i(\overline{\mathfrak{N}}) \rightarrow \operatorname{gr}^i(\overline{\mathfrak{P}}) \rightarrow 0$ are exact for each $i$.
		\item There exists an $\mathfrak{S}_{\mathbb{F}}$-linear splitting $s:\mathfrak{P}\rightarrow \mathfrak{N}$ such that $s(\mathcal{G}^1) \subset \mathcal{F}^1$ and $s(\varphi(\mathfrak{P})) \subset \varphi(\mathfrak{N})$.
	\end{enumerate}
\end{proposition}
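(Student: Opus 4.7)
The plan is to deduce the three parts in the order (1) for $(\mathfrak{P},\mathcal{G})$, then (3), and finally (1) for $(\mathfrak{M},\mathcal{E})$ together with (2). The central tool is the equivalent characterisation from Lemma~\ref{equivalent}: for any $(\mathfrak{Q},\mathcal{H})$ in $\operatorname{Mod}_{\mathcal{F}}$, $(\mathfrak{Q},\mathcal{H}) \in \operatorname{Mod}^{\operatorname{SD}}_{\mathcal{F}}$ if and only if the composite $F^n(\mathfrak{Q}) \to \mathfrak{Q}^\varphi \to \overline{\mathfrak{Q}}$ has image $\operatorname{Fil}^n(\overline{\mathfrak{Q}})$ for every $n$.

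\textbf{Preliminaries and (1) for $\mathfrak{P}$.} Standard diagram chases using $u$-torsionfreeness of the finite free $\mathfrak{S}_{\mathbb{F}}$-modules, combined with the $i=0$ case of the assumed exactness (which gives $0 \to \mathfrak{M}^\varphi \to \mathfrak{N}^\varphi \to \mathfrak{P}^\varphi \to 0$), produce the short exact sequences $0 \to \varphi(\mathfrak{M}) \to \varphi(\mathfrak{N}) \to \varphi(\mathfrak{P}) \to 0$ of $\mathbb{F}[[u^p]]$-modules and $0 \to \overline{\mathfrak{M}} \to \overline{\mathfrak{N}} \to \overline{\mathfrak{P}} \to 0$ of $\mathbb{F}$-vector spaces. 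Lifting representatives via the exactness of $\mathcal{E}^1 \to \mathcal{F}^1 \to \mathcal{G}^1$ then yields $\operatorname{Fil}^n(\overline{\mathfrak{N}}) \twoheadrightarrow \operatorname{Fil}^n(\overline{\mathfrak{P}})$ for every $n$. For (1) applied to $\mathfrak{P}$: given $\overline{x} \in \operatorname{Fil}^n(\overline{\mathfrak{P}})$, lift through $\operatorname{Fil}^n(\overline{\mathfrak{N}})$ and use SD for $\mathfrak{N}$ to find $\tilde{w} \in F^n(\mathfrak{N})$ whose reduction in $\overline{\mathfrak{P}}$ equals $\overline{x}$; the image $w$ of $\tilde{w}$ in $\mathfrak{P}$ lies in $\varphi(\mathfrak{P}) \cap u^n\mathcal{G}^1 = F^n(\mathfrak{P})$ and reduces to $\overline{x}$, so Lemma~\ref{equivalent} gives SD for $(\mathfrak{P},\mathcal{G})$.

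\textbf{Proof of (3).} The main step is the surjection $F^n(\mathfrak{N}) \twoheadrightarrow F^n(\mathfrak{P})$ for every $n$, which I prove by ascending induction. The base case is $n \leq -p$: since $u^p\mathfrak{N}^\varphi \subset \mathcal{F}^1$ implies $u^{-p}\mathcal{F}^1 \supseteq \mathfrak{N}^\varphi$, we have $F^n(\mathfrak{N}) = \varphi(\mathfrak{N})$ and $F^n(\mathfrak{P}) = \varphi(\mathfrak{P})$, and surjectivity is from the preliminaries. For the inductive step: given $e \in F^n(\mathfrak{P})$, obtain $w \in F^n(\mathfrak{N})$ with matching reduction in $\overline{\mathfrak{P}}$ as in the previous paragraph. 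The image $w'$ of $w$ in $\mathfrak{P}$ satisfies $w' - e \in u\mathfrak{P}^\varphi \cap F^n(\mathfrak{P})$, and a direct computation using $u\mathfrak{P}^\varphi \cap \varphi(\mathfrak{P}) = u^p\varphi(\mathfrak{P})$ gives
\[
u\mathfrak{P}^\varphi \cap F^n(\mathfrak{P}) = u^p F^{n-p}(\mathfrak{P}).
\]
Writing $w' - e = u^p x$ with $x \in F^{n-p}(\mathfrak{P})$, the inductive hypothesis lifts $x$ to $\tilde{x} \in F^{n-p}(\mathfrak{N})$, so $w - u^p\tilde{x} \in F^n(\mathfrak{N})$ lifts $e$. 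Using SD for $\mathfrak{P}$, pick an SD basis $(e_i^P)$ of $\varphi(\mathfrak{P})$ with weights $r_i^P$; lift each $e_i^P \in F^{-r_i^P}(\mathfrak{P})$ to $\tilde{e}_i^P \in F^{-r_i^P}(\mathfrak{N}) = \varphi(\mathfrak{N}) \cap u^{-r_i^P}\mathcal{F}^1$. Defining $s(e_i^P) = \tilde{e}_i^P$ and extending $\mathbb{F}[[u]]$-linearly produces $s$ on $\mathfrak{P}^\varphi$ respecting both $\varphi$-images and $\mathcal{F}^1$ by construction; the further extension to $\mathfrak{P}$ is arranged via Smith normal form for $\mathfrak{P}^\varphi \hookrightarrow \mathfrak{P}$ together with the freedom to modify each $\tilde{e}_i^P$ by elements of $\varphi(\mathfrak{M}) \cap u^{-r_i^P}\mathcal{F}^1$.

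\textbf{Deducing (1) for $\mathfrak{M}$, (2), and main obstacle.} With (3), the splitting $s$ identifies $\mathfrak{N} = \mathfrak{M} \oplus s(\mathfrak{P})$ compatibly with $\mathcal{F}^1$ and the $\varphi$-images. An SD basis of $\mathfrak{N}$ adapted to this decomposition has the $\tilde{e}_i^P$'s forming one half and an SD basis of $\mathfrak{M}$ the other, giving (1) for $\mathfrak{M}$; part (2) then follows by reading off the filtrations from these adapted SD bases, so that the graded pieces manifestly fit into short exact sequences. The main obstacle is the inductive step in the surjection $F^n(\mathfrak{N}) \twoheadrightarrow F^n(\mathfrak{P})$: the identity $u\mathfrak{P}^\varphi \cap F^n(\mathfrak{P}) = u^p F^{n-p}(\mathfrak{P})$, and the shift by $p$ that lets the induction terminate after finitely many steps, both rely essentially on the strict weight bound $r_i \in [0,p]$ built into the definition of $\operatorname{Mod}^{\operatorname{SD}}_{\mathcal{F}}$.
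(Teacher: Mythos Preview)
Your argument contains a genuine circularity. In the preliminaries you assert that
\[
\operatorname{Fil}^n(\overline{\mathfrak{N}}) \twoheadrightarrow \operatorname{Fil}^n(\overline{\mathfrak{P}})
\]
follows by ``lifting representatives via the exactness of $\mathcal{E}^1 \to \mathcal{F}^1 \to \mathcal{G}^1$''. This is false in general: given $x \in \mathfrak{P}^\varphi$ with $u^{-n}x \in \mathcal{G}^1$, a lift $z \in \mathcal{F}^1$ of $u^{-n}x$ need not lie in $u^{-n}\mathfrak{N}^\varphi$, and the obstruction (an element of $\mathfrak{M}^\varphi/(\mathcal{E}^1 + u^{-n}\mathfrak{M}^\varphi)$) does not vanish automatically. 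For a concrete failure with $\mathfrak{N}$ not strongly divisible, take $p=5$, $\mathfrak{N}^\varphi = k[[u]]e_1 \oplus k[[u]]e_2$, $\mathfrak{M}^\varphi = k[[u]]e_1$, and $\mathcal{F}^1 = \langle e_1 + u e_2,\, u^3 e_1\rangle$: then $\operatorname{Fil}^{-1}(\overline{\mathfrak{N}}) = k\bar e_1$ maps to $0$ in $\overline{\mathfrak{P}}$, yet $\operatorname{Fil}^{-1}(\overline{\mathfrak{P}}) = \overline{\mathfrak{P}}$. The surjectivity you need \emph{is} true once $\mathfrak{N}$ is strongly divisible, but that is exactly the surjectivity half of part~(2), which you only claim to deduce at the end from the splitting in~(3). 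Since your proofs of (1) for $\mathfrak{P}$ and of the inductive step in (3) both rely on this surjectivity, the logic is circular.

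A secondary issue: your deduction of (1) for $\mathfrak{M}$ from the splitting is also incomplete. Having $\mathcal{F}^1 = \mathcal{E}^1 \oplus s(\mathcal{G}^1)$ and $\varphi(\mathfrak{N}) = \varphi(\mathfrak{M}) \oplus s(\varphi(\mathfrak{P}))$ does not by itself exhibit an $\mathbb{F}_p[[u^p]]$-basis of $\varphi(\mathfrak{M})$ putting $\mathcal{E}^1$ in the required diagonal form; asserting that an SD basis of $\mathfrak{N}$ can be chosen adapted to the decomposition presupposes what you are trying to prove. The paper avoids both problems by first establishing (1) and (2) together --- reducing to the $e=1$ situation and invoking \cite[5.4.6]{B18} --- and only then deducing (3) from (2) by lifting a filtration-compatible splitting of $\overline{\mathfrak{N}} \to \overline{\mathfrak{P}}$.
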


Parts (1) and (2) of this proposition were proved in \cite{B18} in the case $e=1$. For the general case we observe that the condition in Definition~\ref{SD} only refers to the relative positions of $\mathcal{F}^1$ and $\varphi(\mathfrak{M})$; in particular it is a condition on the image of the Frobenius morphism rather than the morphism itself.

\begin{proof}
	To reduce to the case $e=1$ we produce a commutative diagram
	$$
	\begin{tikzcd}[column sep = tiny]
		0 \arrow[r] & u^{-p}\mathcal{E}^1 \otimes_{\varphi} \varphi(\mathfrak{S}_{\mathbb{F}}) \arrow[r] \arrow[d] &u^{-p}\mathcal{F}^1 \otimes_{\varphi} \varphi(\mathfrak{S}_{\mathbb{F}})\arrow[d] \arrow[r] & u^{-p}\mathcal{G}^1 \otimes_{\varphi} \varphi(\mathfrak{S}_{\mathbb{F}}) \arrow[d] \arrow[r] & 0 \\
		0 \arrow[r] &\varphi(\mathfrak{M}) \arrow[r] &\varphi(\mathfrak{N}) \arrow[r] & \varphi(\mathfrak{P})  \arrow[r] & 0
	\end{tikzcd}
	$$
	with the vertical arrows being isomorphisms of $\varphi(\mathfrak{S}_{\mathbb{F}})$-modules. As the vertical arrows go between projective $\varphi(\mathfrak{S}_{\mathbb{F}})$-modules of the same rank the outer arrows can be chosen arbitrarily and, after choosing an $\varphi(\mathfrak{S}_{\mathbb{F}})$-linear splitting of the top exact sequence, this determines the central vertical arrow. In the language of \cite{B18}, this makes $0 \rightarrow u^{-p}\mathcal{E}^1\rightarrow u^{-p}\mathcal{F}^1\rightarrow u^{-p}\mathcal{G}^i\rightarrow 0$ into an exact sequence in $\operatorname{Mod}^{\operatorname{BK}}_k$ with $u^{-p}\mathcal{F}^1$ strongly divisible (as in \cite[5.2.9]{B18}). Applying \cite[5.4.6]{B18} we deduce (1) and (2).
	
	Now we address (3). Note that (2) ensures we can choose a $k\otimes_{\mathbb{F}_p} \mathbb{F}$-linear splitting $\overline{s}$ of $\overline{\mathfrak{N}} \rightarrow \overline{\mathfrak{P}}$ mapping $\operatorname{Fil}^i(\overline{\mathfrak{P}})$ into $\operatorname{Fil}^i(\overline{\mathfrak{N}})$ by choosing successive splittings of the surjections $\operatorname{gr}^i(\overline{\mathfrak{N}}) \rightarrow \operatorname{gr}^i(\overline{\mathfrak{P}})$. From $\overline{s}$ we obtain a splitting
	$$
	s: \varphi(\mathfrak{P}) = \overline{\mathfrak{P}} \otimes_{k\otimes_{\mathbb{F}_p}\mathbb{F}} \varphi(\mathfrak{S}_{\mathbb{F}}) \rightarrow \overline{\mathfrak{N}}\otimes_{k\otimes_{\mathbb{F}_p}\mathbb{F}} \varphi(\mathfrak{S}_{\mathbb{F}}) = \varphi(\mathfrak{N})
	$$
	of $\varphi(\mathfrak{N})\rightarrow \varphi(\mathfrak{P})$ which we claim maps $\mathcal{G}^1$ into $\mathcal{F}^1$. For this we first show that
	$$
	s(F^n(\mathfrak{P})) \subset F^n(\mathfrak{N})
	$$ 
	for $n\leq p$ As $\overline{s}$ is compatible with the filtrations on $\overline{\mathfrak{P}}$ and $\overline{\mathfrak{N}}$ it follows that the image of $F^n(\mathfrak{P}) := \varphi(\mathfrak{P})\cap u^n\mathcal{G}^1$ under $s$ in $\overline{\mathfrak{N}}$ is contained in $\operatorname{Fil}^i(\overline{\mathfrak{N}})$. Lemma~\ref{equivalent} therefore implies that if $x \in F^n(\mathfrak{P})$ then $s( x) = x_1+u^px_2$ for $x_1 \in F^n(\mathfrak{N})$ and $x_2 \in \varphi(\mathfrak{N})$. If $n \leq p$ then $u^px_2 \in F^n(\mathfrak{N})$ and so $s(x) \in F^n(\mathfrak{N})$ also. This establishes the displayed inclusion above. To show $s(\mathcal{G}^1)\subset \mathcal{F}^1$ note that by (1) we have $\mathfrak{P} \in \operatorname{Mod}^{\operatorname{SD}}_{\mathcal{F}}$ and so there is a basis $(e_i)$ of $\varphi(\mathfrak{P})$ as in Definition~\ref{SD}. Since $e_i \in u^{-r_i}\mathcal{G}^1$ we have $s(e_i) \in u^{-r_i}\mathcal{F}^1$ and so
	$$
	s(u^{r_i}e_i) =u^{r_i}s(e_i) \in \mathcal{F}^1
	$$
	which finishes the proof.
\end{proof}

\subsection{Ext group via an explicit complex}

For $\mathfrak{M} = (\mathfrak{M},\mathcal{E})$ and $\mathfrak{P} = (\mathfrak{P},\mathcal{G})$ in $\operatorname{Mod}_{\mathcal{F}}$ consider the first Yoneda extension group $\operatorname{Ext}^1_{\mathcal{F}}(\mathfrak{P},\mathfrak{M})$ in $\operatorname{Mod}_{\mathcal{F}}$, i.e. the set of exact sequences 
$$
0 \rightarrow \mathfrak{M} \rightarrow( \mathfrak{N},\mathcal{F}) \rightarrow \mathfrak{P} \rightarrow 0,\qquad (\mathfrak{N},\mathcal{F}) \in \operatorname{Mod}_{\mathcal{F}}
$$
as considered in the previous section, modulo the equivalence relation identifying two sequences if and only if there exists a morphism $\alpha$ in $\operatorname{Mod}_{\mathcal{F}}$ making the following diagram commute.
$$
\begin{tikzcd}
	0 \ar[r] & \mathfrak{M} \ar[r] \ar[d] & (\mathfrak{N},\mathcal{F})\ar[r] \ar[d,"\alpha"] & \mathfrak{P}\ar[r] \ar[d] & 0 \\
	0 \ar[r] & \mathfrak{M}\ar[r]  & (\mathfrak{N}',\mathcal{F}')\ar[r]  & \mathfrak{P}\ar[r] & 0 
\end{tikzcd}
$$
We define $\operatorname{Ext}^1_{\operatorname{SD}}(\mathfrak{P},\mathfrak{M}) \subset \operatorname{Ext}^1_{\mathcal{F}}(\mathfrak{P},\mathfrak{M})$ as the subset consisting of those classes which can be represented by exact sequences as above with $(\mathfrak{N},\mathcal{F}) \in \operatorname{Mod}^{\operatorname{SD}}_{\mathcal{F}}$. Proposition~\ref{SDext} implies that $\operatorname{Ext}^1_{\operatorname{SD}}(\mathfrak{P},\mathfrak{M})$ is empty unless $\mathfrak{P},\mathfrak{M} \in \operatorname{Mod}^{\operatorname{SD}}_{\mathcal{F}}$.
\begin{notation}
In what follows, for any pair of $\mathfrak{S}_{\mathbb{F}}$-modules, we write $\operatorname{Hom}(-,-)$ for the set of $\mathfrak{S}_{\mathbb{F}}$-linear maps. In the special case of $\operatorname{Hom}(\mathfrak{P},\mathfrak{M})$ we equip this set with the Frobenius given by $\varphi(h) = \varphi_{\mathfrak{M}} \circ h \circ \varphi^{-1}_{\mathfrak{P}}$.
\end{notation}
	For $\mathfrak{M}$ and $\mathfrak{P}$ as above (we emphasis that for this definition we do not require $\mathfrak{M},\mathfrak{P} \in \operatorname{Mod}^{\operatorname{SD}}_{\mathcal{F}}$) consider the submodule 
	$$
	\mathcal{C}^1_{\operatorname{SD}} \subset  \prod_{1}^{e-1} \operatorname{Hom}(\mathfrak{P},\mathfrak{M})[\tfrac{1}{u}]
	$$
	consisting of those $(g_1,\ldots,g_{e-1})$ satisfying $g_i(\mathcal{G}^{i+1}) \subset \mathcal{E}^{i}$ and $g_i(u\mathcal{G}^{i}) \subset \mathcal{E}^{i+1}$ for each $i$.
	This fits into a two-term complex
	$$
	\begin{aligned}
	\mathcal{C}_{\operatorname{SD}}: F^0(\operatorname{Hom}(\mathfrak{P},\mathfrak{M})) \times \prod_{i=\operatorname{min}\lbrace e,2\rbrace}^{e-1} \operatorname{Hom}(\mathcal{G}^i,\mathcal{E}^i)  \xrightarrow{d_{\operatorname{SD}}} \mathcal{C}_{\operatorname{SD}}^1\\
	(h_1,\ldots,h_{e-1}) \mapsto (h_2-h_1,h_3-h_2,\ldots,\varphi^{-1}(h_1)-h_{e-1})
\end{aligned}
	$$
	where, as for objects in $\operatorname{Mod}_{\mathcal{F}}$, we write $F^i(\operatorname{Hom}(\mathfrak{P},\mathfrak{M}))$ for the set of $h \in \varphi(\operatorname{Hom}(\mathfrak{P},\mathfrak{M}))$ mapping $\mathcal{G}^1$ into $u^i\mathcal{E}^1$ 
	\begin{remark}
		When $e=1$ the above formula does not strictly make sense. Instead $d_{\operatorname{SD}}$ should be defined as $d_{\operatorname{SD}}(h_1) = \varphi^{-1}(h_1)-h_1$. In this case we recover the complex considered in \cite[4.1]{B18b}.
	\end{remark}. The following lemma motivates the construction of $\mathcal{C}_{\operatorname{SD}}$.

\begin{lemma}
	$H^0(\mathcal{C}_{\operatorname{SD}}) = \operatorname{Hom}_{\mathcal{F}}(\mathfrak{P},\mathfrak{M})$ and there is an injection 
	$$
	\operatorname{Ext}^1_{\operatorname{SD}}(\mathfrak{P},\mathfrak{M}) \rightarrow H^1(\mathcal{C}_{\operatorname{SD}})
	$$
	(in fact it is a bijection if $\mathfrak{P},\mathfrak{M} \in \operatorname{Mod}^{\operatorname{SD}}_{\mathcal{F}}$ we but only require injectivity for our applications).
\end{lemma}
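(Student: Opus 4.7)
For $H^0$, one unpacks the kernel condition $d_{\operatorname{SD}}(h_1,\ldots,h_{e-1}) = 0$. The first $e-2$ components force $h_1 = h_2 = \cdots = h_{e-1}$ in $\operatorname{Hom}(\mathfrak{P},\mathfrak{M})[\tfrac{1}{u}]$, while the final component $\varphi^{-1}(h_1) = h_{e-1}$ yields $\varphi(h) = h$ for the common value $h$, i.e.\ $\varphi$-equivariance. The input constraints $h \in F^0(\operatorname{Hom}(\mathfrak{P},\mathfrak{M}))$ and $h|_{\mathcal{G}^i} \in \operatorname{Hom}(\mathcal{G}^i,\mathcal{E}^i)$ for $2 \leq i \leq e-1$ then translate to the filtration conditions $h(\mathcal{G}^i) \subset \mathcal{E}^i$ for $1 \leq i \leq e-1$; the boundary cases $i = 0$ and $i = e$ are automatic from $\mathfrak{S}_{\mathbb{F}}$-linearity together with $\varphi$-equivariance, while the fact that $h$ is a genuine morphism $\mathfrak{P} \to \mathfrak{M}$ (and not merely an element of $\operatorname{Hom}(\mathfrak{P},\mathfrak{M})[\tfrac{1}{u}]$) follows from $\varphi$-equivariance combined with $\mathcal{G}^{e-1}[\tfrac{1}{u}] = \mathfrak{P}[\tfrac{1}{u}]$. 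These are exactly the conditions defining $\operatorname{Hom}_{\mathcal{F}}(\mathfrak{P},\mathfrak{M})$.

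For the map $\operatorname{Ext}^1_{\operatorname{SD}}(\mathfrak{P},\mathfrak{M}) \to H^1(\mathcal{C}_{\operatorname{SD}})$, given a representative $0 \to \mathfrak{M} \to (\mathfrak{N},\mathcal{F}) \to \mathfrak{P} \to 0$ with $(\mathfrak{N},\mathcal{F}) \in \operatorname{Mod}^{\operatorname{SD}}_{\mathcal{F}}$, apply Proposition~\ref{SDext}(3) to choose an $\mathfrak{S}_{\mathbb{F}}$-linear splitting $s\colon \mathfrak{P} \to \mathfrak{N}$ with $s(\mathcal{G}^1) \subset \mathcal{F}^1$ and $s(\varphi(\mathfrak{P})) \subset \varphi(\mathfrak{N})$. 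Using the exactness of $0 \to \mathcal{E}^i \to \mathcal{F}^i \to \mathcal{G}^i \to 0$ together with projectivity of $\mathcal{G}^i$ over $\mathfrak{S}_{\mathbb{F}}$, choose an $\mathfrak{S}_{\mathbb{F}}$-linear section $\sigma_i\colon \mathcal{G}^i \to \mathcal{F}^i$ for each $i = 2,\ldots,e-1$, and set $\sigma_1 := s|_{\mathcal{G}^1}$. For $i = 1,\ldots,e-2$, define $g_i \in \operatorname{Hom}(\mathfrak{P},\mathfrak{M})[\tfrac{1}{u}]$ as the unique extension of the difference $\sigma_i|_{\mathcal{G}^{i+1}} - \sigma_{i+1}\colon \mathcal{G}^{i+1} \to \mathcal{E}^i$ (both sides project to the inclusion $\mathcal{G}^{i+1} \hookrightarrow \mathfrak{P}$, so the difference lies in the kernel $\mathcal{E}^i$), and define $g_{e-1}$ so as to encode simultaneously the boundary defect at $\mathcal{F}^e = u^{e+p-1}\mathfrak{N}$ and the Frobenius comparison $\varphi_{\mathfrak{N}} \circ s - s \circ \varphi_{\mathfrak{P}}$, which takes values in $\mathfrak{M}$ by the second clause of Proposition~\ref{SDext}(3). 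The filtration conditions $g_i(\mathcal{G}^{i+1}) \subset \mathcal{E}^i$ and $g_i(u\mathcal{G}^i) \subset \mathcal{E}^{i+1}$ cutting out $\mathcal{C}^1_{\operatorname{SD}}$ then follow immediately from the containments $u\mathcal{F}^{i-1} \subset \mathcal{F}^i$.

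Finally, verify that changing $s$ by an admissible $\mathfrak{S}_{\mathbb{F}}$-linear map $\mathfrak{P} \to \mathfrak{M}$, or each $\sigma_i$ by an $\mathfrak{S}_{\mathbb{F}}$-linear map $\mathcal{G}^i \to \mathcal{E}^i$, shifts $(g_1,\ldots,g_{e-1})$ by the coboundary of the corresponding element of $\mathcal{C}^0_{\operatorname{SD}}$. Yoneda-equivalent representatives yield the same cocycle after matching splittings, so the construction descends to a well-defined map on $\operatorname{Ext}^1_{\operatorname{SD}}$. Injectivity is the direct reversal: if the cocycle of an extension is a coboundary, the sections $(\sigma_1,\ldots,\sigma_{e-1})$ and $s$ can be modified to kill every $g_i$, producing a splitting that is a morphism in $\operatorname{Mod}^{\operatorname{SD}}_{\mathcal{F}}$ and hence trivialising the extension. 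The main obstacle will be the bookkeeping: organising the single Frobenius defect together with the $e-2$ filtration-level defects into a length-$(e-1)$ cocycle, and in particular setting up $g_{e-1}$ so that the twist by $\varphi^{-1}$ in the final entry of $d_{\operatorname{SD}}$ correctly reflects the Frobenius comparison and the wrap-around from level $e-1$ to level $0$.
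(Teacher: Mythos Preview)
Your approach matches the paper's almost exactly: choose sections $s_i$ of $\mathcal{F}^i \to \mathcal{G}^i$ for $i=1,\ldots,e-1$ with $s_1$ satisfying the extra Frobenius condition from Proposition~\ref{SDext}(3), and form the cocycle from their successive differences. The one genuine gap is that you leave $g_{e-1}$ undefined, describing it only as something that ``encodes simultaneously'' a boundary defect and a Frobenius comparison. In fact a single formula handles both at once: the paper sets
\[
g = (s_2-s_1,\; s_3-s_2,\; \ldots,\; \varphi^{-1}(s_1)-s_{e-1}),
\]
where $\varphi^{-1}(s_1)$ is the map obtained from $s_1$ via the Frobenius on $\operatorname{Hom}$. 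The hypothesis $s_1(\varphi(\mathfrak{P}))\subset\varphi(\mathfrak{N})$ guarantees that $\varphi^{-1}(s_1)$ carries $\mathfrak{P}$ into $\mathfrak{N}$ and hence $\mathcal{G}^e = u^{e+p-1}\mathfrak{P}$ into $\mathcal{F}^e$, so $\varphi^{-1}(s_1)-s_{e-1}$ automatically satisfies the conditions defining $\mathcal{C}^1_{\operatorname{SD}}$. There is no separate ``boundary defect at level $e$'' to track; the wrap-around and the Frobenius twist are one and the same term.

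Your injectivity argument (``if the cocycle is a coboundary, adjust the sections to split the extension'') is fine in spirit but presupposes that the map $\operatorname{Ext}^1_{\operatorname{SD}} \to H^1(\mathcal{C}_{\operatorname{SD}})$ is additive, which you have not checked. The paper avoids this by working directly with two extensions having the same image: if both admit splittings $(s_i)$ and $(s_i')$ yielding the same cocycle, then $s_1-s_1' = s_2-s_2' = \cdots = s_{e-1}-s_{e-1}' = \varphi^{-1}(s_1)-\varphi^{-1}(s_1')$, and the map $\alpha\colon n \mapsto n - s_1(\overline{n}) + s_1'(\overline{n})$ is then a morphism in $\operatorname{Mod}_{\mathcal{F}}$ identifying the two extensions. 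This is slightly cleaner than invoking the Baer-sum compatibility.
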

\begin{proof}
	The first assertion is easy so we focus on the second. To construct the injection begin by considering an exact sequence $0 \rightarrow \mathfrak{M} \rightarrow( \mathfrak{N},\mathcal{F}) \rightarrow \mathfrak{P} \rightarrow 0$ in $\operatorname{Mod}^{\operatorname{SD}}_{\mathcal{F}}$. For each $i$ we can choose $\mathfrak{S}_{\mathbb{F}}$-linear splittings $s_i$ of $\mathcal{F}^i \rightarrow \mathcal{G}^i$ for $i=1,\ldots,e-1$. Proposition~\ref{SDext} allows us to assume that $s_1$ maps $\varphi(\mathfrak{P})$ into $\varphi(\mathfrak{N})$. As such $\varphi^{-1}(s_1)$ maps $\mathfrak{P}$ into $\mathfrak{M}$ and so also $\mathcal{G}^e$ into $\mathcal{F}^e$. It is then immediate that
	$$
	g = (s_2-s_1,s_3-s_2,\ldots,\varphi^{-1}(s_1)-s_{e-1})
	$$
	defines an element in $\mathcal{C}_{\operatorname{SD}}^1$. Suppose that $s'_i$ is another choice of splittings with corresponding $g' \in \mathcal{C}^1_{\operatorname{SD}}$. Then $s'_i - s_i \in \operatorname{Hom}(\mathcal{G}^i,\mathcal{E}^i)$ and $s_1 -s_1' \in F^0(\operatorname{Hom}(\mathfrak{P},\mathfrak{M})))$. This shows that $g-g'$ is contained in the image of $d_{\operatorname{SD}}$ so we obtain a well-defined element of $H^1(\mathcal{C}_{\operatorname{SD}})$.
	
	Now suppose $0 \rightarrow \mathfrak{M} \rightarrow( \mathfrak{N},\mathcal{F}) \rightarrow \mathfrak{P} \rightarrow 0$ and $0 \rightarrow \mathfrak{M} \rightarrow( \mathfrak{N}',\mathcal{F}') \rightarrow \mathfrak{P} \rightarrow 0$ are exact sequences mapping, by the construction from the previous subsection, into the same element $H^1(\mathcal{C}_{\operatorname{SD}})$. Then each exact sequence admits splittings $s_i$ and $s_i'$ so that
	$$
	(s_2-s_1,s_3-s_2,\ldots,\varphi^{-1}(s_1)-s_{e-1}) = (s_2'-s_1',s_3'-s_2',\ldots,\varphi^{-1}(s_1')-s_{e-1}')
	$$ 
	Equivalently 
	$$
	s_1-s_1' = s_2-s_2' = \ldots = s_{e-1}-s_{e-1}' = \varphi^{-1}(s_1)-\varphi^{-1}(s_1')
	$$
	For $n \in \mathfrak{N}$ write $\overline{n}$ for its image in $\mathfrak{P}$ and consider the map $\alpha:\mathfrak{N} \rightarrow \mathfrak{N}'$ given by
	$$
	n \mapsto n-s_1(\overline{n})+s_1'(\overline{n})
	$$
	Note this makes sense because $n-s_1(\overline{n}) \in \mathcal{E}^1$ and so can be viewed as an element of $\mathcal{F}'^1$. The fact that $s_1-s_1' = \varphi^{-1}(s_1)-\varphi^{-1}(s_1')$ shows this map is $\varphi$-equivariant. The fact that $s_1-s_1' = s_i-s_i'$ implies $\mathcal{F}^i$ is mapped into $\mathcal{F}'^i$. Therefore $\alpha$ is a morphism in $\operatorname{Mod}^{\operatorname{SD}}_{\mathcal{F}}$ which shows our two exact sequences define the same element in $\operatorname{Ext}^1_{\operatorname{SD}}(\mathfrak{P},\mathfrak{M})$. As a consequence the construction from the first paragraph produces an injection $\operatorname{Ext}^1_{\operatorname{SD}}(\mathfrak{P},\mathfrak{M})\rightarrow H^1(\mathcal{C}_{\operatorname{SD}})$ as desired.
\end{proof}

\subsection{Dimension calculations}
For $\mathfrak{M}$, $\mathfrak{P} \in \operatorname{Mod}_{\mathcal{F}}$ write 
$$
\operatorname{Hom}(\mathfrak{P},\mathfrak{M})_k := \varphi(\operatorname{Hom}(\mathfrak{P},\mathfrak{M}))/u^p
$$ 
and let $F^i(\operatorname{Hom}(\mathfrak{P},\mathfrak{M})_k)$ denote the image of $F^i(\operatorname{Hom}(\mathfrak{P},\mathfrak{M}))$ in $\operatorname{Hom}(\mathfrak{P},\mathfrak{M})_k$.

\begin{proposition}\label{extdim}
	The cohomology of $\mathcal{C}_{\operatorname{SD}}$ is $\mathbb{F}$-finite and if 
	$$
	\chi(\mathfrak{P},\mathfrak{M}) := \operatorname{dim}_{\mathbb{F}} H^1(\mathcal{C}_{\operatorname{SD}}) - \operatorname{dim}_{\mathbb{F}} H^0(\mathcal{C}_{\operatorname{SD}})
	$$
	then
	$$
	\chi(\mathfrak{P},\mathfrak{M}) = \operatorname{dim}_{\mathbb{F}}\frac{\operatorname{Hom}(\mathfrak{P},\mathfrak{M})_k}{F^0(\operatorname{Hom}(\mathfrak{P},\mathfrak{M})_k)} + \sum_{i=1}^{e-1} \operatorname{dim}_{\mathbb{F}} \operatorname{Hom}(\mathcal{G}^{i+1}/u\mathcal{G}^{i},\mathcal{E}^{i}/\mathcal{E}^{i+1})
	$$
	provided that $\operatorname{gr}(\operatorname{Hom}(\mathfrak{P},\mathfrak{M})_k) =0$ for $i< -p$.
\end{proposition}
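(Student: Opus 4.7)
The identification $H^0(\mathcal{C}_{\operatorname{SD}}) = \operatorname{Hom}_{\mathcal{F}}(\mathfrak{P}, \mathfrak{M})$ is direct: vanishing of $d_{\operatorname{SD}}(h_1,\ldots,h_{e-1})$ forces $h_2 = \cdots = h_{e-1} = h_1 = \varphi^{-1}(h_1)$, which together with the a priori memberships $h_1 \in F^0\operatorname{Hom}$ and $h_i \in \operatorname{Hom}(\mathcal{G}^i,\mathcal{E}^i)$ packages exactly into the data of a $\varphi$-equivariant filtered morphism. The $\mathbb{F}$-finiteness of $H^0$ is immediate since $\varphi$-equivariance pins down such a morphism by its reduction modulo $u$.

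For $\chi$, my plan is to separate the ``Frobenius'' contribution from the ``filtration-jump'' contributions by constructing a morphism of complexes $\pi: \mathcal{C}_{\operatorname{SD}} \to \mathcal{D}$, where $\mathcal{D}$ is the simpler two-term complex
\[
F^0\operatorname{Hom}(\mathfrak{P},\mathfrak{M}) \xrightarrow{\varphi^{-1}-1} \operatorname{Hom}(\mathfrak{P},\mathfrak{M})[\tfrac{1}{u}],
\]
and $\pi$ is the projection $(h_1,\ldots,h_{e-1}) \mapsto h_1$ in degree $0$ together with the summation $\sigma(g_1,\ldots,g_{e-1}) = \sum_i g_i$ in degree $1$. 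The identity $\sum_i g_i = \varphi^{-1}(h_1) - h_1$, visible directly from the formula for $d_{\operatorname{SD}}$, ensures $\pi$ is a chain map; its kernel subcomplex has terms $\prod_{i=2}^{e-1} \operatorname{Hom}(\mathcal{G}^i, \mathcal{E}^i)$ and $\ker\sigma$ with ``staircase'' differential $(h_2,\ldots,h_{e-1}) \mapsto (h_2, h_3-h_2, \ldots, -h_{e-1})$, which is triangular and hence injective. Additivity of Euler characteristics along the induced short exact sequence of complexes then reduces the problem to computing $\chi$ of the pieces separately.

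The Euler characteristic of $\mathcal{D}$ is essentially the $e=1$ case from \cite[4.1]{B18b}: using the hypothesis that $\operatorname{gr}^i(\operatorname{Hom}(\mathfrak{P},\mathfrak{M})_k) = 0$ for $i<-p$ to control the behaviour at large negative powers of $u$, one obtains $\chi(\mathcal{D}) = \dim_{\mathbb{F}} \operatorname{Hom}(\mathfrak{P},\mathfrak{M})_k / F^0\operatorname{Hom}(\mathfrak{P},\mathfrak{M})_k$; this simultaneously yields the $\mathbb{F}$-finiteness of $H^1$. For the kernel complex and the cokernel of $\pi$, I would introduce partial-sum coordinates $s_j = \sum_{i<j} g_i$ in the target; the containment conditions defining the $A_i$'s then factor through the graded pieces $\mathcal{G}^{i+1}/u\mathcal{G}^i$ on the source and $\mathcal{E}^i/\mathcal{E}^{i+1}$ on the target, producing the sum $\sum_{i=1}^{e-1} \dim_{\mathbb{F}}\operatorname{Hom}(\mathcal{G}^{i+1}/u\mathcal{G}^i, \mathcal{E}^i / \mathcal{E}^{i+1})$. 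Assembling everything via additivity yields the stated formula.

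The main obstacle will be the cokernel calculation: showing precisely that the partial-sum reparametrisation, combined with the $A_i$-containment conditions, yields exactly the graded-piece Homs claimed. This requires careful bookkeeping with the filtration conditions $u\mathcal{G}^{i-1} \subset \mathcal{G}^i$ and $u\mathcal{E}^{i-1} \subset \mathcal{E}^i$ so that neither spurious terms appear nor required terms are missed, and that the non-surjectivity of $\sigma$ onto $\operatorname{Hom}[\tfrac{1}{u}]$ contributes exactly the remaining portion of the sum.
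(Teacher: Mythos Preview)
Your projection $\pi:\mathcal{C}_{\operatorname{SD}}\to\mathcal{D}$ is a natural idea, but as written the argument breaks: with $\mathcal{D}^1=\operatorname{Hom}(\mathfrak{P},\mathfrak{M})[\tfrac{1}{u}]$, the map $\varphi^{-1}-1$ has image contained in a bounded lattice (indeed $\varphi^{-1}(h_1)\in\operatorname{Hom}(\mathfrak{P},\mathfrak{M})$ and $h_1\in\varphi(\operatorname{Hom}(\mathfrak{P},\mathfrak{M}))\subset u^{-(e+p-1)}\operatorname{Hom}(\mathfrak{P},\mathfrak{M})$), so $H^1(\mathcal{D})$ is infinite-dimensional. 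Likewise the cokernel of $\sigma$ in degree $1$ is infinite-dimensional for the same reason. Additivity of Euler characteristics is therefore not available; you are implicitly asking two infinities to cancel to a specific finite number, and the phrase ``the non-surjectivity of $\sigma$ contributes exactly the remaining portion of the sum'' is precisely where this cancellation would have to be made rigorous. Replacing $\mathcal{D}^1$ by $\operatorname{im}(\sigma)$ restores exactness but then the identification with the $e=1$ complex of \cite{B18b} is no longer clear, since $\operatorname{im}(\sigma)$ depends on all $e$ filtration steps. Your kernel complex has the same issue: its differential is injective, but its $H^1$ is a quotient of one infinite-dimensional lattice by another, and showing this quotient is finite of the predicted dimension is essentially the original problem again.

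The paper avoids these infinities by a different device: rather than projecting, it shifts $\mathfrak{M}\rightsquigarrow u\mathfrak{M}$, obtaining a short exact sequence $0\to\mathcal{C}_{\operatorname{SD}}(1)\to\mathcal{C}_{\operatorname{SD}}\to\mathcal{Q}\to 0$ in which $\mathcal{Q}$ is visibly $\mathbb{F}$-finite in both degrees. One computes $\dim\mathcal{Q}^1-\dim\mathcal{Q}^0$ explicitly (this is where the hypothesis $\operatorname{gr}^i=0$ for $i<-p$ enters, to control $F^0\operatorname{Hom}/F^0\operatorname{Hom}(\mathfrak{P},u\mathfrak{M})$), and iterates: for $n$ large enough the twisted module $u^n\mathfrak{M}$ satisfies $\varphi(\operatorname{Hom})\subset u\operatorname{Hom}$ and $\varphi(\operatorname{Hom})\subset\operatorname{Hom}(\mathcal{G}^i,\mathcal{E}^i)$ for all $i$, whereupon $\varphi-1$ becomes bijective on $\operatorname{Hom}$ and $\chi$ is computed directly via a further filtration of $\mathcal{C}^1_{\operatorname{SD}}$ by the submodules $\mathcal{C}^j_{\operatorname{SD}}$. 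Each step is a comparison of finite-dimensional objects, so no regularisation is needed.
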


We begin by proving the proposition under the following assumptions: (i) every $h \in \varphi(\operatorname{Hom}(\mathfrak{P},\mathfrak{M}))$ maps $\mathcal{G}^i$ into $\mathcal{E}^i$ for every $i$ and (ii) $\varphi(\operatorname{Hom}(\mathfrak{P},\mathfrak{M})) \subset u\operatorname{Hom}(\mathfrak{P},\mathfrak{M})$. 

\begin{lemma}\label{firstcase}
	Proposition~\ref{extdim} holds under assumptions (i) and (ii).
\end{lemma}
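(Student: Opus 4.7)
The plan is to compute the Euler characteristic of $\mathcal{C}_{\operatorname{SD}}$ directly, by solving the differential equations and organising the cokernel into finite-dimensional pieces. Write $H = \operatorname{Hom}(\mathfrak{P},\mathfrak{M})$. Under assumption (i) we have $F^0(\operatorname{Hom}(\mathfrak{P},\mathfrak{M})) = \varphi(H)$, so $\mathcal{C}^0_{\operatorname{SD}} = \varphi(H) \times \prod_{i=2}^{e-1}\operatorname{Hom}(\mathcal{G}^i,\mathcal{E}^i)$.

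First I would verify $H^0(\mathcal{C}_{\operatorname{SD}})=0$ using (ii): an element of $H^0$ corresponds to $h \in \varphi(H)$ satisfying $\varphi^{-1}(h)=h$, equivalently $h=\varphi(h) \in uH$ by (ii); iterating yields $h \in \bigcap_n u^n H = 0$. So $\chi = \dim_{\mathbb{F}} H^1$ and it suffices to compute $H^1$.

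To compute $H^1$, I would solve the system $d_{\operatorname{SD}}(h_1,\ldots,h_{e-1}) = (g_1,\ldots,g_{e-1})$ step by step. The first $e-2$ equations force $h_i = h_1 + g_1+\cdots+g_{i-1}$, and the last equation reduces to $\varphi^{-1}(h_1) - h_1 = \sum_i g_i$ in $H$. The additional requirement $h_i \in \operatorname{Hom}(\mathcal{G}^i,\mathcal{E}^i)$ is automatic for the $h_1$-part by (i), but it imposes conditions on the partial sums $\sum_{j<i} g_j$ landing in $\operatorname{Hom}(\mathcal{G}^i,\mathcal{E}^i)$. The cokernel $H^1$ therefore splits into two contributions: (A) obstructions arising from these partial-sum conditions, and (B) an obstruction arising from $\varphi^{-1}-\operatorname{id}$.

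For (A), I expect each filtration step to contribute a class in the graded piece $\operatorname{Hom}(\mathcal{G}^{i+1}/u\mathcal{G}^i,\mathcal{E}^i/\mathcal{E}^{i+1})$, summing to the first term on the right-hand side of the claimed formula. For (B), assumption (ii) implies $\varphi^{-1}-\operatorname{id}$ acts as $-\operatorname{id}$ plus a $u$-adically nilpotent correction on $\varphi(H)$; combined with the hypothesis that $\operatorname{gr}(\operatorname{Hom}(\mathfrak{P},\mathfrak{M})_k)=0$ for $i<-p$, the cokernel is finite-dimensional and reduces to the $e=1$ case treated in \cite[4.1]{B18b}, giving the term $\operatorname{Hom}(\mathfrak{P},\mathfrak{M})_k/F^0(\operatorname{Hom}(\mathfrak{P},\mathfrak{M})_k)$. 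The main obstacle I foresee is showing that the two obstruction classes combine additively with no cross-terms; this will require organising $\mathcal{C}^1_{\operatorname{SD}}$ by a filtration adapted to both the $\mathcal{G}^\bullet$-filtration on the domain and the $\varphi$-action, so that the associated graded complex cleanly separates (A) from (B).
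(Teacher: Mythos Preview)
Your outline has the right skeleton but is confused about how assumptions (i) and (ii) enter, and this produces a spurious contribution (B).

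First, under (i) every $h \in \varphi(H)$ already sends $\mathcal{G}^1$ into $\mathcal{E}^1$, so $F^0(\operatorname{Hom}(\mathfrak{P},\mathfrak{M})_k)$ is all of $\operatorname{Hom}(\mathfrak{P},\mathfrak{M})_k$ and the first term in the target formula vanishes. What you should be proving under (i)+(ii) is simply $\chi = \sum_{i=1}^{e-1}\dim_{\mathbb{F}}\operatorname{Hom}(\mathcal{G}^{i+1}/u\mathcal{G}^i,\mathcal{E}^i/\mathcal{E}^{i+1})$.

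Second, under (ii) the map $\varphi - 1$ is \emph{bijective} on $H$: if $h \in H$ then $-\sum_{n\geq 0}\varphi^n(h)$ converges $u$-adically and is a preimage. Hence once $\sum_i g_i$ lies in $H$, the equation $\varphi^{-1}(h_1) - h_1 = \sum_i g_i$ has a solution $h_1 \in \varphi(H)$ and there is no residual obstruction. Your appeal to the hypothesis $\operatorname{gr}^i(\operatorname{Hom}(\mathfrak{P},\mathfrak{M})_k)=0$ for $i<-p$ is misplaced: that hypothesis plays no role in this lemma and enters only later, in the reduction of the general case to this one via the $u^n$-twist.

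The paper avoids your (A)/(B) dichotomy entirely. Because of (i), the identity map embeds $\mathcal{C}^0_{\operatorname{SD}}$ into $\mathcal{C}^1_{\operatorname{SD}}$ with cokernel exactly $\mathcal{H}^1 = \prod_{i=1}^{e-1}\operatorname{Hom}(\mathcal{G}^{i+1}/u\mathcal{G}^i,\mathcal{E}^i/\mathcal{E}^{i+1})$. This is refined to a descending chain $\ldots \subset \mathcal{C}^{-1}_{\operatorname{SD}} \subset \mathcal{C}^0_{\operatorname{SD}} \subset \mathcal{C}^1_{\operatorname{SD}}$, where $\mathcal{C}^j_{\operatorname{SD}}$ for $j\leq 0$ imposes $g_i(\mathcal{G}^{i+j'}) \subset \mathcal{E}^{i+j'}$ for $0 \leq j' \leq -j$, and $d_{\operatorname{SD}}$ respects this chain. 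The long exact sequences for the two-term complexes $\mathcal{C}_{\operatorname{SD},j}:\mathcal{C}^{j-1}_{\operatorname{SD}}\to\mathcal{C}^j_{\operatorname{SD}}$ telescope. For $j$ sufficiently negative the elements of $\mathcal{C}^j_{\operatorname{SD}}$ lie in $H$, and bijectivity of $\varphi-1$ then gives $H^1(\mathcal{C}_{\operatorname{SD},j}) = 0$ by an explicit construction. The telescoping yields $\dim H^1(\mathcal{C}_{\operatorname{SD}}) = \dim\mathcal{H}^1$.

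So the filtration you gesture at in your last sentence is indeed the heart of the matter, but it should be organised so that \emph{all} of $H^1$ comes from the graded pieces; there is no separate $\varphi$-obstruction to track.
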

\begin{proof}
	Assumption (ii) implies that $u$-adically $\varphi^n(h) \rightarrow 0$ for every $h \in \operatorname{Hom}(\mathfrak{P},\mathfrak{M})$. From this we deduce $\varphi-1$ is an $\mathbb{F}$-linear automorphism of $\operatorname{Hom}(\mathfrak{P},\mathfrak{M})$. Injectivity is clear and, for surjectivity, if $h \in \operatorname{Hom}(\mathfrak{P},\mathfrak{M})$ then $\varphi-1$ sends $-\sum_{n \geq 0} \varphi^n(h)$ onto $h$. From injectivity of $\varphi-1$ we deduce that $H^0(\mathcal{C}_{\operatorname{SD}}) = 0$.
	
	Since $H^0(\mathcal{C}_{\operatorname{SD}}) = 0$ the map $d_{\operatorname{SD}}:\mathcal{C}_{\operatorname{SD}}^0 \rightarrow \mathcal{C}^1_{\operatorname{SD}}$ is injective. However, assumption (i) also allows us to view $\mathcal{C}^0_{\operatorname{SD}} \subset \mathcal{C}^1_{\operatorname{SD}}$ via obvious map $(h_i)\mapsto (h_i)$, Furthermore, the cokernel of this second inclusion naturally identifies with
	$$
	\mathcal{H}^1 := \prod_{i = 1}^{e-1} \operatorname{Hom}(\mathcal{G}^{i+1}/u\mathcal{G}_{i},\mathcal{E}^{i}/\mathcal{E}^{i+1})
	$$
	To relate the cokernel of $d_{\operatorname{SD}}$ with $\mathcal{H}^1$ we refine $\mathcal{C}^0_{\operatorname{SD}} \subset \mathcal{C}^1_{\operatorname{SD}}$ to a sequence
	$$
	\ldots\subset \mathcal{C}_{\operatorname{SD}}^{-1} \subset\mathcal{C}^{0}_{\operatorname{SD}}\subset \mathcal{C}_{\operatorname{SD}}^1
	$$
	by defining $\mathcal{C}^{j}_{\operatorname{SD}} \subset \mathcal{C}^0_{\operatorname{SD}}$ as the subset consisting of those $(g_i) \in \mathcal{C}^0_{\operatorname{SD}}$ for which $g_i(\mathcal{G}^{i+j'}) \subset \mathcal{E}^{i+j'}$ for all $0 \leq j' \leq -j$. By assumption (i) $\varphi(\operatorname{Hom}(\mathfrak{P},\mathfrak{M}))$ maps $\mathcal{G}_i$ into $\mathcal{E}_i$ for each $i$. This implies $d_{\operatorname{SD}}$ induces complexes:
	$$
	\mathcal{C}_{\operatorname{SD},j}: \mathcal{C}^{j-1}_{\operatorname{SD}}\rightarrow \mathcal{C}^{j}_{\operatorname{SD}}
	$$
	For $j\leq 0$ we can also define maps $\mathcal{C}^{j}_{\operatorname{SD}} \rightarrow \mathcal{H}^1$ by
	$$
		(h_i) \mapsto (-1)^{-j+1}(\underbrace{0 ,\ldots, 0}_{-j+1},\overline{h}_2,\ldots,\overline{h}_{e+j-1})
	$$
	(here $\overline{h}_i$ denotes the image of $h$ in $\operatorname{Hom}(\mathcal{G}^{i-j+1}/u\mathcal{G}^{i-j},\mathcal{E}^{i-j}/\mathcal{E}^{i-j+1})$). A short computation shows that
	$$
	\begin{tikzcd}
		\mathcal{C}^{j}_{\operatorname{SD}} \ar[r] & \mathcal{H}_1 \\ \mathcal{C}^{j-1}_{\operatorname{SD}} \ar[u]\ar[r] & \mathcal{H}_1 \ar[u]
		\end{tikzcd}
	$$
	commutes for all $j$. If $\mathcal{H}^{j}$ denotes the image of $\mathcal{C}^{j}_{\operatorname{SD}} \rightarrow \mathcal{H}^1$ then the following diagram commutes and has exact rows
	$$
	\begin{tikzcd}
		0 \arrow[r] & \mathcal{C}^{j-1}_{\operatorname{SD}} \arrow{r} & \mathcal{C}^{j}_{\operatorname{SD}} \arrow[r] & \mathcal{H}^{j} \arrow[r] & 0 \\
		0 \arrow[r] & \mathcal{C}^{j-2}_{\operatorname{SD}} \arrow[r] \arrow[u,"d_{\operatorname{SD}}"] &\mathcal{C}^{j-1}_{\operatorname{SD}} \arrow[r] \arrow{u}{d_{\operatorname{SD}}} &  \mathcal{H}^{j-1}  \arrow[u,hook] \arrow[r] & 0
	\end{tikzcd}
	$$
	By considering the associated long exact sequence we deduce that $H^1(\mathcal{C}_{\operatorname{SD},j})$ is finite if and only if $H^1(\mathcal{C}_{\operatorname{SD},j+1})$ is. Since $H^0(\mathcal{C}_{\operatorname{SD},j}) = 0$ if $H^1(\mathcal{C}_{\operatorname{SD},j})$ is $\mathbb{F}$-finite then we also have:
	$$
	\operatorname{dim}_{\mathbb{F}} H^1(\mathcal{C}_{\operatorname{SD,j}}) = \operatorname{dim}_{\mathbb{F}} H^1(\mathcal{C}_{\operatorname{SD},j+1}) + \operatorname{dim}_{\mathbb{F}} \mathcal{H}^j - \operatorname{dim}_{\mathbb{F}} \mathcal{H}^{j+1}
	$$
	It is easy to see $\mathcal{H}^j = 0$ for $j \leq -e+2$. Therefore, since $\mathcal{C}^1_{\operatorname{SD},1} = \mathcal{C}^1_{\operatorname{SD}}$, the result will follow if $H^1(\mathcal{C}_{\operatorname{SD},j}) = 0$ for sufficiently small $j$.
	
	For this, note that if $j \leq -e+2$ then $\mathcal{C}^{j}_{\operatorname{SD}}$ consists of those $(h_i) \in \mathcal{C}^{1}_{\operatorname{SD}}$ with $h_i(\mathcal{G}^{i'}) \subset \mathcal{E}^{i'}$ for all $i' \geq i$. In particular, if $(h_i)$ is such an element then $h_i \in \operatorname{Hom}(\mathfrak{P},\mathfrak{M})$ for each $i$ and so we can choose, by the first paragraph of the proof, $h_1' \in \varphi(\operatorname{Hom}(\mathfrak{P},\mathfrak{M}))$ so that $\varphi^{-1}(h'_1)-h'_1 = h_1+h_2 + \ldots+ h_{e-1}$. Then define 
	$$
	h_2' = h_2 + h_1',~ h_3' = h_3 + h_2',\ldots,~ h_{e-1}' = h_{e-1} + h_{e-2}'
	$$
	Using that $\varphi(h_e')$ maps $\mathcal{G}_i$ into $\mathcal{E}_i$ for every $i$ we deduce $(h_i') \in \mathcal{C}^{j}_{\operatorname{SD}} = \mathcal{C}^{j-1}_{\operatorname{SD}}$ and that $d_{\operatorname{SD}}((h_i')) = (h_i)$. This completes the proof.
\end{proof}

\begin{lemma}\label{largen}
	After replacing $(\mathfrak{M},\mathcal{E})$ with $(u^n\mathfrak{M},u^n\mathcal{E}) \in \operatorname{Mod}_{\mathcal{F}}$ for $n$ sufficiently large conditions (i) and (ii) are satisfied.
\end{lemma}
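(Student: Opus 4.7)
The plan is to exploit the fact that the Frobenius on $\mathfrak{S}$ raises $u$-adic valuations by a factor of $p$, so that twisting $\mathfrak{M}$ by a high enough power of $u$ forces the Frobenius image of $\operatorname{Hom}(\mathfrak{P},\mathfrak{M})$ into arbitrarily large $u$-adic depth. As a first step I would verify that $(u^n\mathfrak{M},u^n\mathcal{E})$ is indeed an object of $\operatorname{Mod}_{\mathcal{F}}$ for any $n\geq 0$: multiplying the defining inclusions of $\mathcal{E}^\bullet$ by $u^n$ preserves all the axioms $u\mathcal{E}^{i-1}\subset\mathcal{E}^i\subset\mathcal{E}^{i-1}$, $u^p\mathcal{E}^0\subset\mathcal{E}^1$, and the height identity $u^{e+p-1}(u^n\mathfrak{M})=u^n\mathcal{E}^e$, so this is essentially formal once one fixes the convention that the Frobenius on $u^n\mathfrak{M}$ is the one for which the natural scaling identifies $(u^n\mathfrak{M})^\varphi$ with $u^n\mathfrak{M}^\varphi$.

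Next I would use the identification $\operatorname{Hom}(\mathfrak{P},u^n\mathfrak{M})=u^n\operatorname{Hom}(\mathfrak{P},\mathfrak{M})$ inside $\operatorname{Hom}(\mathfrak{P},\mathfrak{M})[1/u]$. The Frobenius on Hom, defined by $\varphi(h)=\varphi_{\mathfrak{M}}\circ h\circ\varphi_{\mathfrak{P}}^{-1}$, is $\varphi$-semi-linear and so satisfies $\varphi(u^n h)=u^{pn}\varphi(h)$; rescaling $\mathfrak{M}$ by $u^n$ therefore rescales the Frobenius output by $u^{pn}$. Since $\operatorname{Hom}(\mathfrak{P},\mathfrak{M})$ is a finitely generated $\mathfrak{S}_{\mathbb{F}}$-module and $\varphi$ becomes bijective after inverting $u$, there exists some $N\geq 0$ with $\varphi(\operatorname{Hom}(\mathfrak{P},\mathfrak{M}))\subset u^{-N}\operatorname{Hom}(\mathfrak{P},\mathfrak{M})$. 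Combining these gives
$$
\varphi(\operatorname{Hom}(\mathfrak{P},u^n\mathfrak{M}))\subset u^{pn-N}\operatorname{Hom}(\mathfrak{P},\mathfrak{M}).
$$

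With this estimate both conditions follow for large $n$. Condition (ii) amounts to containment in $u\cdot\operatorname{Hom}(\mathfrak{P},u^n\mathfrak{M})=u^{n+1}\operatorname{Hom}(\mathfrak{P},\mathfrak{M})$, which holds as soon as $pn-N\geq n+1$. For condition (i), I would use that each $\mathcal{E}^i$ contains $u^{e+p-1}\mathfrak{M}$, so the twisted filtration step $u^n\mathcal{E}^i$ contains $u^{n+e+p-1}\mathfrak{M}$; the displayed estimate then forces $\varphi(h)(\mathcal{G}^i)\subset u^{pn-N}\mathfrak{M}\subset u^n\mathcal{E}^i$ whenever $pn-N\geq n+e+p-1$. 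Both inequalities hold simultaneously for every $n\geq (N+e+p)/(p-1)$, completing the reduction. The only delicate point in the argument is pinning down the conventions that make the Frobenius on the $u^n$-twist and on the Hom module interact compatibly; once this bookkeeping is set up the rest is elementary $u$-adic estimation.
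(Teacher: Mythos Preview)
Your argument is correct and follows essentially the same idea as the paper: both exploit the $(p-1)n$ gain in $u$-adic valuation coming from Frobenius semilinearity to force $\varphi(\operatorname{Hom}(\mathfrak{P},u^n\mathfrak{M}))$ into high enough $u$-adic depth that it lands in every $u^n\mathcal{E}^i$ and in $u\operatorname{Hom}(\mathfrak{P},u^n\mathfrak{M})$. The paper packages this through the filtration identity $F^N(\operatorname{Hom}(\mathfrak{P},u^n\mathfrak{M}))=u^{pn}F^{N-(p-1)n}(\operatorname{Hom}(\mathfrak{P},\mathfrak{M}))$, which it also reuses in the proof of the subsequent proposition; note that this identity and your own computation both require the Frobenius on $\operatorname{Hom}(\mathfrak{P},u^n\mathfrak{M})$ to be the restriction of $\varphi_{\mathfrak{M}}\circ(-)\circ\varphi_{\mathfrak{P}}^{-1}$, rather than the transported structure giving $(u^n\mathfrak{M})^\varphi=u^n\mathfrak{M}^\varphi$ that your first paragraph describes.
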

	\begin{proof}
	If $N \geq 0$ is sufficiently large then $u^N\operatorname{Hom}(\mathcal{G}^1,\mathcal{E}^1)$ is contained in both $u\operatorname{Hom}(\mathfrak{P},\mathfrak{M})$ and $\operatorname{Hom}(\mathcal{G}^i,\mathcal{E}^i)$ for each $i$. For any $n \in \mathbb{Z}$ we have 
	\begin{equation}\label{p-1}
		 F^N(\operatorname{Hom}(\mathfrak{P},u^n\mathfrak{M})) = u^{pn}F^{N-(p-1)n}(\operatorname{Hom}(\mathfrak{P},\mathfrak{M}))
	\end{equation}
	For sufficiently large $n$ we have $F^{N-(p-1)n}(\operatorname{Hom}(\mathfrak{P},\mathfrak{M})) = \varphi(\operatorname{Hom}(\mathfrak{P},\mathfrak{M}))$ and so $u^{pn}F^{N-(p-1)n}(\operatorname{Hom}(\mathfrak{P},\mathfrak{M})) = \varphi(\operatorname{Hom}(\mathfrak{P},u^n\mathfrak{M}))$. This shows $\varphi(\operatorname{Hom}(\mathfrak{P},u^n\mathfrak{M})) \subset u^N\operatorname{Hom}(\mathcal{G}_1,u^n\mathcal{E}_1)$ and so is contained in $u\operatorname{Hom}(\mathfrak{P},u^n\mathfrak{M})$ and $\operatorname{Hom}(\mathcal{G}_i,u^n\mathcal{E}_i)$ for each $i$.
\end{proof}
	\begin{proof}[Proof of Propsition~\ref{extdim}]
	First, note that $H^0(\mathcal{C}_{\operatorname{SD}})$ is contained $\operatorname{Hom}(\mathfrak{P},\mathfrak{M})^{\varphi=1}$  which is always $\mathbb{F}$-finite since it is contained in the finite dimensional $\mathbb{F}$-vector space $(\operatorname{Hom}(\mathfrak{P},\mathfrak{M}) \otimes_{k[[u]]} C^\flat)^{\varphi=1}$. If we replace $(\mathfrak{M},\mathcal{E})$ by $(u^n\mathfrak{M},u^n\mathcal{E}) \in \operatorname{Mod}_{\mathcal{F}}$ in the definition of $\mathcal{C}_{\operatorname{SD}}$ we obtain another complex which we denote $\mathcal{C}_{\operatorname{SD}}(n)$. Taking $n=1$ we obtain an exact sequence
	$$
	0 \rightarrow \mathcal{C}_{\operatorname{SD}}(1) \rightarrow \mathcal{C}_{\operatorname{SD}} \rightarrow \mathcal{Q} \rightarrow 0
	$$
	of complexes, whose associated long exact sequence reads
	\begin{equation}\label{les}
		\begin{aligned}
			0 \rightarrow H^0(\mathcal{C}_{\operatorname{SD}}(1))& \rightarrow H^0(\mathcal{C}_{\operatorname{SD}}) \rightarrow H^0(\mathcal{Q}) \\ &\rightarrow H^1(\mathcal{C}_{\operatorname{SD}}(1)) \rightarrow H^1(\mathcal{C}_{\operatorname{SD}}) \rightarrow H^1(\mathcal{Q}) \rightarrow 0
		\end{aligned}
	\end{equation}
	Note that $\mathcal{Q}$ is a two term complex $\mathcal{Q}^0 \xrightarrow{\gamma} \mathcal{Q}^1$ and the $\mathcal{Q}^i$ can be described explicitly. It is easy to see that $\mathcal{C}_{\operatorname{SD}}(1)^1 = u\mathcal{C}^1_{\operatorname{SD}}$, and so $\mathcal{Q}_1 \cong \mathcal{C}^1_{\operatorname{SD}} / u\mathcal{C}^1_{\operatorname{SD}}$. On the other hand,
	$$
	\mathcal{Q}^0 \cong  \frac{F^0(\operatorname{Hom}(\mathfrak{P},\mathfrak{M}))}{F^0(\operatorname{Hom}(\mathfrak{P},u\mathfrak{M}))}\times \prod_{i=2}^{e-1} \operatorname{Hom}(\mathcal{G}^{i},\mathcal{E}^{i}/u\mathcal{E}_{i}) 
	$$
	We claim
	$$
	\frac{F^0(\operatorname{Hom}(\mathfrak{P},\mathfrak{M}))}{F^0(\operatorname{Hom}(\mathfrak{P},u\mathfrak{M}))} \cong \bigoplus_{i \in p\mathbb{Z}_{\leq 0} \cup \mathbb{Z}_{\geq 1}} \operatorname{gr}^i(\operatorname{Hom}(\mathfrak{P},\mathfrak{M})_k)
	$$
	as $\mathbb{F}$-vector spaces. In particular, we claim both $\mathcal{Q}^0$ and $\mathcal{Q}^1$ are $\mathbb{F}$-finite and so the same is true for the cohomology of $\mathcal{Q}$. Together Lemma~\ref{firstcase} and Lemma~\ref{largen} imply $H^1(\mathcal{C}_{\operatorname{SD}}(n))$ is finite for large $n$. From \eqref{les} we deduce finiteness of $H^1(\mathcal{C}_{\operatorname{SD}})$.  
	
	To verify the claim first choose an $\mathbb{F}$-linear splitting of $0\rightarrow F^1(\operatorname{Hom}(\mathfrak{P},\mathfrak{M})) \rightarrow F^0(\operatorname{Hom}(\mathfrak{P},\mathfrak{M})) \rightarrow \operatorname{gr}^i(\operatorname{Hom}(\mathfrak{P},\mathfrak{M})) \rightarrow 0$ and so write 
	$$
	F^0(\operatorname{Hom}(\mathfrak{P},\mathfrak{M})) \cong F^1(\operatorname{Hom}(\mathfrak{P},\mathfrak{M})) \oplus \operatorname{gr}^0(\operatorname{Hom}(\mathfrak{P},\mathfrak{M}))
	$$
	Note that $F^0(\operatorname{Hom}(\mathfrak{P},u\mathfrak{M}))= u\operatorname{Hom}(\mathfrak{P},\mathfrak{M}) \cap F^1(\operatorname{Hom}(\mathfrak{P},\mathfrak{M}))$, which is the kernel of the surjection $F^1(\operatorname{Hom}(\mathfrak{P},\mathfrak{M})) \rightarrow F^1(\operatorname{Hom}(\mathfrak{P},\mathfrak{M})_k)$. Therefore, 
	$$
	\frac{F^0(\operatorname{Hom}(\mathfrak{P},\mathfrak{M}))}{F^0(\operatorname{Hom}(\mathfrak{P},u\mathfrak{M}))} \cong \operatorname{gr}^0(\operatorname{Hom}(\mathfrak{P},\mathfrak{M})) \oplus F^1(\operatorname{Hom}(\mathfrak{P},\mathfrak{M})_k)
	$$
	Splitting $0 \rightarrow F^{i+1}(\operatorname{Hom}(\mathfrak{P},\mathfrak{M})_k) \rightarrow F^i(\operatorname{Hom}(\mathfrak{P},\mathfrak{M})_k) \rightarrow \operatorname{gr}^i(\operatorname{Hom}(\mathfrak{P},\mathfrak{M})_k) \rightarrow 0$ for $i \geq 1$ allows us to write 
	$$
	F^1(\operatorname{Hom}(\mathfrak{P},\mathfrak{M})_k) \cong \bigoplus_{i \in \mathbb{Z}_{\geq 1}} \operatorname{gr}^i(\operatorname{Hom}(\mathfrak{P},\mathfrak{M})_k)
	$$
	There are also exact sequences $0 \rightarrow \operatorname{gr}^{i-p}(\operatorname{Hom}(\mathfrak{P},\mathfrak{N})) \xrightarrow{u} \operatorname{gr}^i(\operatorname{Hom}(\mathfrak{P},\mathfrak{M})) \rightarrow \operatorname{gr}^i(\operatorname{Hom}(\mathfrak{P},\mathfrak{M})_k) \rightarrow 0$ and, by choosing $\mathbb{F}$-linear splitting, we can identify
	$$
	\operatorname{gr}^0(\operatorname{Hom}(\mathfrak{P},\mathfrak{M})) \cong \bigoplus_{i \in p\mathbb{Z}_{\leq 0}} \operatorname{gr}^i(\operatorname{Hom}(\mathfrak{P},\mathfrak{M})_k)
	$$
	The claim follows. 
	
	To finish the proof note that \eqref{les} implies 
	$$
	\begin{aligned}
		\chi(\mathfrak{P},\mathfrak{M}) &= \chi(\mathfrak{P},u\mathfrak{M}) + \operatorname{dim}_{\mathbb{F}} H^1(\mathcal{C}_{\operatorname{SD}}) - \operatorname{dim}_{\mathbb{F}} H^0(\mathcal{C}_{\operatorname{SD}}) \\
		&= \chi(\mathfrak{P},u\mathfrak{M}) + \operatorname{dim}_{\mathbb{F}} \mathcal{Q}^1 - \operatorname{dim}_{\mathbb{F}} \mathcal{Q}^0
	\end{aligned}
	$$
	Since $\mathcal{C}^1_{\operatorname{SD}}$ is an $\mathbb{F}[[u]]$-lattice inside $\prod_{i=1}^{e-1} \operatorname{Hom}(\mathfrak{P},\mathfrak{M})[\frac{1}{u}]$, $\mathcal{Q}^1$ has $\mathbb{F}$-dimension equal to $(e-1)r$ where $r$ is the $\mathbb{F}[[u]]$-rank of $\operatorname{Hom}(\mathfrak{P},\mathfrak{M})$. The above description of $\mathcal{Q}^0$ shows it has $\mathbb{F}$-dimension $(e-2)r + \sum_{i \not\in p\mathbb{Z}_{\leq 0} \cup \mathbb{Z}_{\geq 1}} \operatorname{dim}_{\mathbb{F}} \operatorname{gr}^i(\operatorname{Hom}(\mathfrak{P},\mathfrak{M})_k)$. Since $r = \sum_i \operatorname{dim}_{\mathbb{F}} \operatorname{gr}^i(\operatorname{Hom}(\mathfrak{P},\mathfrak{M})_k)$ it follows that
	$$
	\chi(\mathfrak{P},\mathfrak{M}) = \chi(\mathfrak{P},u\mathfrak{M}) + \sum_{\substack{i < 0 \\ p\nmid i }} \operatorname{dim}_{\mathbb{F}} \operatorname{gr}^i(\operatorname{Hom}(\mathfrak{P},\mathfrak{M})_k)
	$$
	Using \eqref{p-1} and the assumption that $\operatorname{gr}^i(\operatorname{Hom}(\mathfrak{P},\mathfrak{M})_k) = 0$ for $i <-p$ we deduce
	$$
	\begin{aligned}
		\chi(\mathfrak{P},\mathfrak{M}) &= \chi(\mathfrak{P},u^n\mathfrak{M}) + \sum_{m=0}^{n-1} \left( \sum_{\substack{i < 0 \\ p\nmid i }} \operatorname{dim}_{\mathbb{F}} \operatorname{gr}^{i-(p-1)m}(\operatorname{Hom}(\mathfrak{P},\mathfrak{M})_k)\right) \\
		&= \chi(\mathfrak{P},u^n\mathfrak{M}) +  \sum_{i <0} \operatorname{dim}_{\mathbb{F}} \operatorname{gr}^i(\operatorname{Hom}(\mathfrak{P},\mathfrak{M}))_k \\
		&= \chi(\mathfrak{P},u^n\mathfrak{M}) + \operatorname{dim}_{\mathbb{F}} \frac{\operatorname{Hom}(\mathfrak{P},\mathfrak{M})_k}{F^0(\operatorname{Hom}(\mathfrak{P},\mathfrak{M})_k)}
	\end{aligned}
	$$
	for $n > 2$. Combining this with Lemma~\ref{firstcase} and Lemma~\ref{largen} gives the proposition.
\end{proof}

\subsection{Reformulating the previous result}\label{reform}

We now suppose that $\mathfrak{P}$ and $\mathfrak{M}$ are objects in $\operatorname{Mod}^{\operatorname{SD}}_{\mathcal{F}}$. Attached to $\mathfrak{P}$ is an $e$-tuple of filtered $k\otimes_{\mathbb{F}_p} \mathbb{F}$-modules 
$$
(\overline{\mathfrak{P}}, \overline{\mathcal{G}}^1,\ldots,\overline{\mathcal{G}}^{e-1})
$$
where $\overline{\mathcal{G}}^i = \mathcal{G}^i/u\mathcal{G}^i$ equipped with the one step filtration with $\operatorname{Fil}^1(\overline{\mathcal{G}^i})$ equal the image of $\mathcal{G}^{i+1}$. Thus, we can attach to $\mathfrak{P}$ a Hodge type $\mu(\mathfrak{P})$. Likewise for $\mathfrak{M}$. In this context Proposition~\ref{extdim} can be reformulated as:

\begin{proposition}\label{niceform}
	$\operatorname{dim}_{\mathbb{F}} H^1(\mathcal{C}_{\operatorname{SD}}) = \operatorname{dim}_{\mathbb{F}} H^0(\mathcal{C}_{\operatorname{SD}}) + d(\mu(\mathfrak{P}),\mu(\mathfrak{M}))$
\end{proposition}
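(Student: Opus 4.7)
The strategy is to match the two summands in the explicit formula from Proposition~\ref{extdim} with the pieces of $d(\mu(\mathfrak{P}),\mu(\mathfrak{M})) = \sum_{i,j} d(\mu(\mathfrak{P})_{ij},\mu(\mathfrak{M})_{ij})$ coming respectively from $j \geq 2$ and from $j=1$, and then to invoke the definition $\chi(\mathfrak{P},\mathfrak{M}) = \dim_{\mathbb{F}} H^1(\mathcal{C}_{\operatorname{SD}}) - \dim_{\mathbb{F}} H^0(\mathcal{C}_{\operatorname{SD}})$.

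First I would dispose of the indices $j \geq 2$. For these, the filtered $k\otimes_{\mathbb{F}_p}\mathbb{F}$-module $\overline{\mathcal{G}}^{j-1}$ attached to $\mathfrak{P}$ carries a one-step filtration with $\operatorname{Fil}^1$ equal to the image of $\mathcal{G}^j$, so after decomposing along the $f$ embeddings $k \hookrightarrow \mathbb{F}$ the Hodge type $\mu(\mathfrak{P})_{ij}$ is concentrated in $\{0,1\}$, and likewise for $\mathfrak{M}$. For two one-step-filtered modules the formula $d(\lambda,\lambda') = \sum_{x\in\lambda} \#\{x'\in\lambda':x>x'\}$ collapses to $\dim_{\mathbb{F}}\operatorname{Fil}^1 M \cdot \dim_{\mathbb{F}}\operatorname{gr}^0 M'$, and summing over embedding components identifies $\sum_i d(\mu(\mathfrak{P})_{ij},\mu(\mathfrak{M})_{ij})$ with $\dim_{\mathbb{F}}\operatorname{Hom}_{k\otimes\mathbb{F}}(\mathcal{G}^j/u\mathcal{G}^{j-1},\mathcal{E}^{j-1}/\mathcal{E}^j)$. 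Summing over $j=2,\ldots,e$ recovers the second term of Proposition~\ref{extdim}.

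The bulk of the work lies in the $j=1$ contribution. Fix SD bases $(e_k^P)$ of $\varphi(\mathfrak{P})$ and $(e_l^M)$ of $\varphi(\mathfrak{M})$ with weights $r_k^P,r_l^M \in [0,p]$. The computation in the proof of Lemma~\ref{equivalent} shows that $\operatorname{Fil}^n\overline{\mathfrak{P}}$ is the $\mathbb{F}$-span of the images of those $e_k^P$ with $r_k^P \leq -n$, and analogously for $\overline{\mathfrak{M}}$; partitioning the bases according to the embedding of $k$ on which they are supported therefore identifies $\mu(\mathfrak{P})_{i,1}$ with the multiset $\{-r_k^P\}$ restricted to that component, and similarly for $\mathfrak{M}$. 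The $\varphi$-module $\operatorname{Hom}(\mathfrak{P},\mathfrak{M})$ inherits a dual SD structure with $\varphi(\operatorname{Hom})$ admitting a basis $(e_k^{P,*}\otimes e_l^M)$ whose weight (in the sense of Definition~\ref{SD}) is $r_l^M - r_k^P \in [-p,p]$; this in particular verifies the hypothesis $\operatorname{gr}^i(\operatorname{Hom}(\mathfrak{P},\mathfrak{M})_k)=0$ for $i<-p$ invoked in Proposition~\ref{extdim}. Applying the same computation as before, $F^0(\operatorname{Hom}(\mathfrak{P},\mathfrak{M})_k)$ is the span of those pairs with $r_l^M - r_k^P \leq 0$, so
\[
\dim_{\mathbb{F}}\frac{\operatorname{Hom}(\mathfrak{P},\mathfrak{M})_k}{F^0(\operatorname{Hom}(\mathfrak{P},\mathfrak{M})_k)} = \#\{(k,l):r_l^M > r_k^P\},
\]
and grouping the pairs $(k,l)$ by the common embedding index $i$ on which $e_k^P$ and $e_l^M$ are supported expresses this as $\sum_i d(\mu(\mathfrak{P})_{i,1},\mu(\mathfrak{M})_{i,1})$.

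Adding the two pieces identifies the right-hand side of Proposition~\ref{extdim} with $d(\mu(\mathfrak{P}),\mu(\mathfrak{M}))$, whence the proposition follows. The main technical obstacle is the $j=1$ step: one must verify that the Frobenius twist $\varphi(h) = \varphi_{\mathfrak{M}}\circ h\circ \varphi_{\mathfrak{P}}^{-1}$ on $\operatorname{Hom}(\mathfrak{P},\mathfrak{M})$ really yields a basis of $\varphi(\operatorname{Hom})$ whose weights are the differences $r_l^M - r_k^P$ and that $F^0$ picks out precisely the non-positive-weight summands; the remaining bookkeeping (in particular the passage to the $(i,j)$-indexed decomposition) is formal.
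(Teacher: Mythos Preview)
Your proposal is correct and follows essentially the same route as the paper. The paper's proof is more abstract: rather than computing with SD bases, it observes via Lemma~\ref{equivalent} that the filtered module $\operatorname{Hom}(\mathfrak{P},\mathfrak{M})_k$ identifies with $\operatorname{Hom}_{\operatorname{Fil}}(\overline{\mathfrak{P}},\overline{\mathfrak{M}})$ (so the first summand in Proposition~\ref{extdim} is $d$ of the $j=1$ piece by the very definition in Section~\ref{filt}), and rewrites $\operatorname{Hom}(\mathcal{G}^{i+1}/u\mathcal{G}^i,\mathcal{E}^i/\mathcal{E}^{i+1})$ as $\operatorname{Hom}_{\operatorname{Fil}}(\overline{\mathcal{G}}^i,\overline{\mathcal{E}}^i)/\operatorname{Fil}^0$ for the remaining pieces. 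Your explicit basis computation is exactly what underlies that identification, so the two arguments coincide once unpacked.
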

\begin{proof}
	As both $\mathfrak{P}$ and $\mathfrak{M}$ are strongly divisible Lemma~\ref{equivalent} implies that the filtered module $\operatorname{Hom}(\mathfrak{P},\mathfrak{M})_k$ identifies with the filtered module $\operatorname{Hom}_{\operatorname{Fil}}(\overline{\mathfrak{P}},\overline{\mathfrak{M}})$ (as described in Section~\ref{filt}). Since the filtrations on $\overline{\mathfrak{P}}$ and $\overline{\mathfrak{M}}$ are concentrated in degrees $[0,p]$ this implies $\operatorname{gr}^i(\operatorname{Hom}(\mathfrak{P},\mathfrak{M})_k)= 0$ for $i<-p$. Thus Proposition~\ref{extdim} applies. Since
	$$
	\operatorname{Hom}(\mathcal{G}^{i+1}/u\mathcal{G}^i,\mathcal{E}^i/\mathcal{E}^{i+1}) = \operatorname{Hom}(\operatorname{Fil}^1(\overline{\mathcal{G}}^i),\operatorname{gr}^0(\overline{\mathcal{E}}^i)) = \frac{\operatorname{Hom}(\overline{\mathcal{G}}^i,\overline{\mathcal{E}}^i)}{\operatorname{Fil}^0(\operatorname{Hom}(\overline{\mathcal{G}^i},\overline{\mathcal{E}}^i))}
	$$
	it follows that $\operatorname{dim}_{\mathbb{F}}\frac{\operatorname{Hom}(\mathfrak{P},\mathfrak{M})_k}{F^0(\operatorname{Hom}(\mathfrak{P},\mathfrak{M})_k)} + \sum_{i=1}^{e-1} \operatorname{dim}_{\mathbb{F}} \operatorname{Hom}(\mathcal{G}^{i+1}/u\mathcal{G}^{i},\mathcal{E}^{i}/\mathcal{E}^{i+1})$ is precisely $d(\mu(\mathfrak{P}),\mu(\mathfrak{M}))$.
\end{proof}
\section{Crystalline vs. strong divisibility}\label{sectionfilonphi}

\subsection{Filtrations on the image of Frobenius}\label{filonphi}
Fix $(\mathfrak{M},\alpha,\mathcal{F})$ corresponding to an $\mathcal{O}$-valued point of $\mathcal{L}^{\mu,\operatorname{conv}}_R$ with $\mathcal{L}^{\mu,\operatorname{conv}}_R$ as in Section~\ref{moduli}. For the moment $\mu$ is any Hodge type concentrated in degrees $[0,h_j]$ with $h_j\geq 0$. Write $V = V_R \otimes_{\alpha} \mathcal{O}$. Then $V[\frac{1}{p}]$ is a crystalline representation of Hodge type $\mu$ and so, as described in Section~\ref{crys} we have  $D:= D_{\operatorname{crys}}(V[\frac{1}{p}])$ a finite free $K_0\otimes_{\mathbb{Q}_p} E$-module and $D_K = D\otimes_{K_0}K$ a filtered $K\otimes_{\mathbb{Q}_p} E$-module of type $\mu$. 

Since $\mathfrak{M}$ and $V$ satisfy \eqref{ident} it follows that $\mathfrak{M}$ is the Breuil--Kisin module associated to $V$ by Kisin as in e.g. \cite[1.2.1]{Kis10} or \cite{Kis06}. In particular there is $\varphi$-equivariant identification
$$
\mathfrak{M}^\varphi \otimes_{\mathfrak{S}} S[\tfrac{1}{p}] \cong D \otimes_{K_0} S[\tfrac{1}{p}]
$$
where $S$ denotes the $p$-adic completion of the divided power envelope of $W(k)[u]$ relative to the ideal generated by $E(u)$, cf. \cite[3.2]{GLS}. Concretely $S$ can be viewed as the subring of $K_0[[u]]$ consisting of series of the form
$$
\sum_{i=0}^\infty a_i \frac{u^i}{e(i)!}, \qquad a_i \in W(k)
$$
where $e(i)$ denotes the largest integer $\leq i/e$.
This allows us to define a $\varphi$-equivariant diffential operator on $\mathfrak{M}^\varphi\otimes_{\mathfrak{S}} S$ by the formula
$$
\mathcal{N}(d\otimes a) = d\otimes (-u\frac{d}{du}(a))
$$
for $d \in D$ and $a \in S$. 

\begin{construction}
	For integers $n_{ij} \geq 0$ inductively define $S\otimes_{\mathbb{Z}_p} E$-submodules $\operatorname{Fil}^{\lbrace n_{ij} \rbrace}$ of $\mathfrak{M}^\varphi \otimes_{\mathfrak{S}} S$ by setting $\operatorname{Fil}^{\lbrace n_{ij}\rbrace} = \mathfrak{M}^\varphi \otimes_{\mathfrak{S}} S$ if every $n_{ij} \leq 0$ and
	$$
	\operatorname{Fil}^{\lbrace n_{ij}\rbrace} = \lbrace x \in \mathfrak{M}^\varphi \otimes_{\mathfrak{S}} S \mid f_{ij}(x) \in \operatorname{Fil}^{n_{ij}}(D_{K,ij}) \text{ for all $ij$ and }\mathcal{N}(x) \in \operatorname{Fil}^{\lbrace n_{ij}-1\rbrace} \rbrace
	$$
	otherwise.
\end{construction}

Tensoring along the map $S \rightarrow K$ sending $u \mapsto \pi$ produces a surjection $f_\pi:\mathfrak{M}^\varphi \otimes_{\mathfrak{S}}S[\frac{1}{p}] \rightarrow D_K$.
Let $f_{ij}$ denote the composition of this surjection with the projection $D_K =\prod_{ij} D_{K,ij} \rightarrow D_{K,ij}$.

\begin{lemma}\label{filproperties}
The $\operatorname{Fil}^{\lbrace n_{ij}\rbrace}$ enjoy the following properties
\begin{enumerate}
	\item $f_{ij}(\operatorname{Fil}^{\lbrace n_{ij} \rbrace})=\operatorname{Fil}^{n_{ij}}(D_{K,ij})$ for each $ij$.
	\item These are filtrations in the sense that $E_{ij}(u)\operatorname{Fil}^{\lbrace n_{ij} - 1_{i'j'}\rbrace} \subset \operatorname{Fil}^{\lbrace n_{ij} \rbrace} \subset \operatorname{Fil}^{\lbrace n_{ij} - 1_{i'j'}\rbrace}$ for every $i'j'$ (here $1_{i'j'}$ denotes the tuple which is zero everywhere but in the $i'j'$-th position where it is $1$).
	\item $E_{i'j'}(u)x \in \operatorname{Fil}^{\lbrace n_{ij}\rbrace}$ implies $x \in \operatorname{Fil}^{\lbrace n_{ij}-1_{i'j'}\rbrace}$.
	\item $\operatorname{Fil}^{\lbrace h_j\rbrace} \cap \mathfrak{M}^\varphi = (\prod E_{ij}(u)^{h_j})\mathfrak{M}$.
\end{enumerate}
\end{lemma}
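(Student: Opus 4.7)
The natural approach is induction on the multi-index $(n_{ij})$, with base case ``all $n_{ij}\le 0$'', where $\operatorname{Fil}^{\lbrace n_{ij}\rbrace}=\mathfrak{M}^\varphi\otimes_{\mathfrak{S}}S$ and each of (1)--(4) is immediate. The engine of the induction is the explicit behaviour of $E_{i'j'}(u)$ under the decomposition $K\otimes_{\mathbb{Q}_p}E\cong\prod_{i''j''}E$: the image $E_{i'j'}(\pi)$ vanishes in the $(i',j')$-component and is a unit in every other component. I would prove the parts in the order (2), (3), (1), (4).

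For (2) (reading $E_{i'j'}(u)$ in place of $E_{ij}(u)$ on the left, which appears to be the intended statement), the right inclusion is tautological. For the left inclusion I check the two defining conditions of $\operatorname{Fil}^{\lbrace n_{ij}\rbrace}$ applied to $E_{i'j'}(u)x$: the $f$-condition reduces to the ``zero/unit'' behaviour of $E_{i'j'}(\pi)$ noted above, while the $\mathcal{N}$-condition follows from the Leibniz identity $\mathcal{N}(E_{i'j'}(u)x)=E_{i'j'}(u)\mathcal{N}(x)-uE_{i'j'}'(u)x$ together with the inductive hypothesis applied to both summands. Claim (3) is essentially dual: invertibility of $E_{i'j'}(\pi)$ at every $(i'',j'')\ne(i',j')$ lets one divide component by component in the $f$-condition, the $(i',j')$-condition automatically relaxes by one, and the $\mathcal{N}$-part is again handled by induction.

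Part (1) is the heart of the matter. The inclusion $f_{ij}(\operatorname{Fil}^{\lbrace n_{ij}\rbrace})\subseteq\operatorname{Fil}^{n_{ij}}(D_{K,ij})$ is built into the definition. For surjectivity, given $\bar x\in\operatorname{Fil}^{n_{ij}}(D_{K,ij})$, I first lift $\bar x$ to an element $d\in D$ supported in the $i$-th factor of the decomposition $D=\prod_i D_i$; then $d\otimes 1\in\mathfrak{M}^\varphi\otimes_{\mathfrak{S}}S$ is annihilated by $\mathcal{N}$, but typically has nonzero images in $D_{K,ij'}$ for $j'\ne j$. To kill these unwanted components without disturbing the $(i,j)$-one, I multiply by a suitable polynomial built from the $E_{ij'}(u)$ for $j'\ne j$, rescaled so that its value at position $(i,j)$ is $1$. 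The resulting $\mathcal{N}$-correction consists of derivatives of this polynomial factor, which by induction lies in $\operatorname{Fil}^{\lbrace n_{ij}-1\rbrace}$, yielding an element of $\operatorname{Fil}^{\lbrace n_{ij}\rbrace}$ as required.

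Finally (4) is obtained by combining (2) and (3): the inclusion $\prod E_{ij}(u)^{h_j}\mathfrak{M}\subseteq\operatorname{Fil}^{\lbrace h_j\rbrace}\cap\mathfrak{M}^\varphi$ follows by applying (2) once per factor, starting from $\mathfrak{M}^\varphi\otimes_{\mathfrak{S}}S=\operatorname{Fil}^{\lbrace 0\rbrace}$ (and using the height hypothesis to ensure the product lands in $\mathfrak{M}^\varphi$), while the reverse inclusion proceeds by iterating (3) to peel off factors of $E_{ij}(u)$ from an element of $\operatorname{Fil}^{\lbrace h_j\rbrace}\cap\mathfrak{M}^\varphi$ until one is left with an element of $\mathfrak{M}$. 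The main obstacle throughout is (1): coordinating the $(i,j)$-decomposition of $D_K$ with the $\mathcal{N}$-filtration requires pinning down the correct ``cut-off'' polynomial, after which the rest of the argument is routine Breuil-filtration bookkeeping.
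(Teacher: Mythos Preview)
Your approach to parts (1)--(3) via induction on the multi-index is reasonable and is essentially what \cite[2.1.9]{GLS15} does (which is what the paper cites). The key observations---the zero/unit behaviour of $E_{i'j'}(\pi)$ across components, and the Leibniz rule for $\mathcal{N}$---are the right ingredients, though some of the details you leave implicit (especially the $(i',j')$-component in (3) and the $\mathcal{N}$-bookkeeping in (1)) require more care than you indicate.

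Your argument for (4), however, has a genuine gap. You claim the reverse inclusion $\operatorname{Fil}^{\{h_j\}}\cap\mathfrak{M}^\varphi\subseteq(\prod E_{ij}(u)^{h_j})\mathfrak{M}$ follows by ``iterating (3) to peel off factors of $E_{ij}(u)$''. But (3) does not provide a factorisation: it says that \emph{if} $E_{i'j'}(u)x$ lies in a given filtration step, \emph{then} $x$ lies one step lower. It does not assert that every element of $\operatorname{Fil}^{\{h_j\}}$ is divisible by $E_{i'j'}(u)$ in $\mathfrak{M}^\varphi\otimes_{\mathfrak{S}}S$, so there is nothing to peel. Moreover, even granting such a factorisation in $\mathfrak{M}^\varphi\otimes_{\mathfrak{S}}S[\tfrac{1}{p}]$, you would still need to show the quotient lands in the \emph{integral} lattice $\mathfrak{M}$ rather than merely in $\mathfrak{M}[\tfrac{1}{p}]$---and this integrality is precisely the content of the comparison between the Breuil--Kisin module and the crystalline filtration.

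This is why the paper cites \cite[(2.2.1)]{GLS15} separately for (4): that result (ultimately going back to Kisin's theory) establishes exactly this compatibility, and it is not a formal consequence of the ``soft'' filtration properties (1)--(3). Indeed, the paper's Corollary~\ref{intfil} derives the general identity $\operatorname{Fil}^{\{n_{ij}\}}\cap\mathfrak{M}^\varphi=\mathfrak{M}^\varphi\cap(\prod E_{ij}(u)^{n_{ij}})\mathfrak{M}$ by \emph{decreasing} induction starting from (4), confirming that (4) is the essential input rather than a consequence of the rest.
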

\begin{proof}
	Properties (1), (2) and (3) follow from \cite[2.1.9]{GLS15}. Part (4) follows from \cite[(2.2.1)]{GLS15}.
\end{proof}

\begin{corollary}\label{intfil}
	$\operatorname{Fil}^{\lbrace n_{ij}\rbrace} \cap \mathfrak{M}^\varphi = \mathfrak{M}^\varphi \cap \left( \prod_{ij} E_{ij}(u)^{n_{ij}} \right) \mathfrak{M}$ and there are exact sequences
	$$
	0 \rightarrow \operatorname{Fil}^{\lbrace n_{ij}-1_{i'j'}\rbrace} \cap \mathfrak{M}^\varphi \xrightarrow{E_{i'j'}(u)} \operatorname{Fil}^{\lbrace n_{ij} \rbrace} \cap \mathfrak{M}^\varphi \xrightarrow{f_{ij}} \operatorname{Fil}^{n_{i'j'}}(D_{K,i'j'}) \cap f_{i'j'}(\mathfrak{M}^\varphi)
	$$
	whose rightmost map becomes surjective after inverting $p$.
	Furthermore,
	$$
	\mathcal{F}^j = \operatorname{Fil}^{\lbrace h_1,\ldots,h_j,0,\ldots,0\rbrace} \cap \mathfrak{M}^\varphi
	$$
	where $(h_1,\ldots,h_j,0,\ldots,0)$ indicates the tuple with $h_{j'}$ in the $ij'$-th position if $j' \leq j$ and $0$ otherwise.
\end{corollary}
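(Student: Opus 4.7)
The plan is to establish the exact sequence first, as it is the technical core, then use it to induct for the identity $\operatorname{Fil}^{\lbrace n_{ij}\rbrace}\cap \mathfrak{M}^\varphi = \mathfrak{M}^\varphi \cap \left(\prod_{ij}E_{ij}(u)^{n_{ij}}\right)\mathfrak{M}$, and finally to derive the characterisation of $\mathcal{F}^j$ by combining this identity with the uniqueness argument from Proposition~\ref{conv}.

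For the exact sequence, injectivity of multiplication by $E_{i'j'}(u)$ is automatic because, under the product decomposition $\mathfrak{S}_{\mathcal{O}}\cong \prod_i \mathcal{O}[[u]]$, it acts on the projective module $\mathfrak{M}^\varphi$ as the regular element $u - \kappa_{i'j'}(\pi)$ on the $i'$-th factor and as $1$ on the other factors. Well-definedness of the map into $\operatorname{Fil}^{\lbrace n_{ij}\rbrace}$ is Lemma~\ref{filproperties}(2), and the composition $f_{i'j'}\circ E_{i'j'}(u)$ vanishes because sending $u\mapsto \pi$ turns $E_{i'j'}(u)$ into $\pi-\kappa_{i'j'}(\pi)=0$ in the $i'j'$-component of $D_K$. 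For exactness at the middle term I would observe that $\mathfrak{M}^\varphi/E_{i'j'}(u)\mathfrak{M}^\varphi$ is finite projective over $\mathfrak{S}_\mathcal{O}/(E_{i'j'})\cong \mathcal{O}$ and is therefore $p$-torsionfree; this upgrades $f_{i'j'}(x)=0$ for $x\in \mathfrak{M}^\varphi$ from a rational to an integral divisibility $x = E_{i'j'}(u)y$ with $y\in \mathfrak{M}^\varphi$, after which Lemma~\ref{filproperties}(3) places $y$ in $\operatorname{Fil}^{\lbrace n_{ij}-1_{i'j'}\rbrace}$. Surjectivity onto $\operatorname{Fil}^{n_{i'j'}}(D_{K,i'j'})$ after inverting $p$ is then Lemma~\ref{filproperties}(1) together with the identification $f_{i'j'}(\mathfrak{M}^\varphi)[\tfrac{1}{p}] = D_{K,i'j'}$.

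For the identity I would induct on $\sum(h_j - n_{ij})$, decreasing one coordinate at a time starting from the tuple $\lbrace h_j\rbrace$. The base case is Lemma~\ref{filproperties}(4), noting that $(\prod E_{ij}(u)^{h_j})\mathfrak{M}$ is already contained in $\mathfrak{M}^\varphi$. The inductive step from $\lbrace m_{ij}\rbrace$ to $\lbrace m_{ij}-1_{i'j'}\rbrace$ uses the exact sequence to identify $\operatorname{Fil}^{\lbrace m_{ij}-1_{i'j'}\rbrace}\cap \mathfrak{M}^\varphi$ with the preimage $\lbrace y\in \mathfrak{M}^\varphi : E_{i'j'}(u)y\in \operatorname{Fil}^{\lbrace m_{ij}\rbrace}\cap \mathfrak{M}^\varphi\rbrace$; substituting the inductive hypothesis and cancelling the non-zero-divisor $E_{i'j'}(u)$ on $\mathfrak{M}$ rewrites this as $\mathfrak{M}^\varphi\cap(\prod E_{ij}(u)^{m_{ij}-1_{i'j'}})\mathfrak{M}$. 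For the characterisation of $\mathcal{F}^j$, the uniqueness argument in the proof of Proposition~\ref{conv} forces $\mathcal{F}^j[\tfrac{1}{p}] = (\prod_{j'\leq j}E_{j'}(u)^{h_{j'}})\mathfrak{M}[\tfrac{1}{p}]\cap \mathfrak{M}^\varphi[\tfrac{1}{p}]$, and intersecting integrally with the $p$-torsionfree $\mathfrak{M}^\varphi$ followed by the identity just proved delivers the stated tuple.

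The main obstacle is the integrality used in the middle of the exact sequence: without the $p$-torsionfreeness of $\mathfrak{M}^\varphi/E_{i'j'}(u)\mathfrak{M}^\varphi$ one would obtain the identity, the exact sequence, and the formula for $\mathcal{F}^j$ only after inverting $p$, whereas the applications in Sections~\ref{moduli} and later require the integral statement.
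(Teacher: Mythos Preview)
Your argument tracks the paper's closely: left-exactness via Lemma~\ref{filproperties}(2),(3) and the observation that $\ker(f_{i'j'}|_{\mathfrak{M}^\varphi}) = E_{i'j'}(u)\mathfrak{M}^\varphi$, decreasing induction for the identity anchored at Lemma~\ref{filproperties}(4), and the uniqueness from Proposition~\ref{conv} for the description of $\mathcal{F}^j$. These parts are correct and essentially identical to the paper.

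There is, however, a genuine gap in your justification of right-exactness after inverting $p$. Lemma~\ref{filproperties}(1) asserts that $f_{i'j'}(\operatorname{Fil}^{\{n_{ij}\}}) = \operatorname{Fil}^{n_{i'j'}}(D_{K,i'j'})$, but $\operatorname{Fil}^{\{n_{ij}\}}$ is a submodule of $\mathfrak{M}^\varphi \otimes_{\mathfrak{S}} S[\tfrac{1}{p}]$, not of $\mathfrak{M}^\varphi[\tfrac{1}{p}]$. The identification $f_{i'j'}(\mathfrak{M}^\varphi)[\tfrac{1}{p}] = D_{K,i'j'}$ alone does not let you pass from an $S$-level lift to one lying in $\mathfrak{M}^\varphi[\tfrac{1}{p}]$ while remaining inside the filtration. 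The paper fills this by using that every $x \in S[\tfrac{1}{p}]$ can be written as $x_1 + E(u)^h x_2$ with $x_1 \in \mathfrak{S}[\tfrac{1}{p}]$; since $E(u)^h(\mathfrak{M}^\varphi \otimes_{\mathfrak{S}} S) \subset \operatorname{Fil}^{\{n_{ij}\}}$ for $h$ large, this lets one replace any element of $\operatorname{Fil}^{\{n_{ij}\}}$ by one in $\operatorname{Fil}^{\{n_{ij}\}} \cap \mathfrak{M}^\varphi[\tfrac{1}{p}]$ with the same image under each $f_{ij}$. Alternatively, since your induction for the identity uses only left-exactness, you could reorder: establish the identity first, and then read off surjectivity after inverting $p$ directly from $\operatorname{Fil}^{\{n_{ij}\}}\cap \mathfrak{M}^\varphi[\tfrac{1}{p}] = (\prod E_{ij}(u)^{n_{ij}})\mathfrak{M}[\tfrac{1}{p}] \cap \mathfrak{M}^\varphi[\tfrac{1}{p}]$ together with the Chinese remainder decomposition of $\mathfrak{S}_E/(\prod E_{ij}^{n_{ij}})$.

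A minor point: your induction only reaches tuples with $n_{ij} \le h_j$, whereas the paper first handles the region $n_{ij} \ge h_j$ (where $\operatorname{Fil}^{n_{ij}}(D_{K,ij}) = 0$ forces $\operatorname{Fil}^{\{n_{ij}\}}\cap\mathfrak{M}^\varphi = (\prod E_{ij}^{n_{ij}-h_j})(\operatorname{Fil}^{\{h_j\}}\cap\mathfrak{M}^\varphi)$) and then descends. This does not affect the applications here, but the stated corollary is for arbitrary $n_{ij} \ge 0$.
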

\begin{proof}
	Lemma~\ref{filproperties} together with the fact that the kernel of $f_{i'j'}$ on $\mathfrak{M}^\varphi$ is $E_{i'j'}(u)\mathfrak{M}^\varphi$, ensures that the above sequence is exact. If $n_{ij} \geq h_j$ for each $ij$ then $\operatorname{Fil}^{n_{ij}}(D_{K,ij}) =0$ and therefore 
	$$
	\operatorname{Fil}^{\lbrace n_{ij}\rbrace} \cap \mathfrak{M}^\varphi = \left( \prod_{ij} E_{ij}(u)^{n_{ij}-h_j}\right) (\operatorname{Fil}^{\lbrace h_j \rbrace}\cap \mathfrak{M}^\varphi)
	$$
	As a consequence, part (4) of Lemma~\ref{filproperties} gives the first identity when $n_{ij} \geq h_j$. For general $n_{ij}$ we then argue by decreasing induction on $\sum n_{ij}$ (the previous sentence gives the basis case). If $x \in \operatorname{Fil}^{\lbrace n_{ij}\rbrace}\cap \mathfrak{M}^\varphi$ then we can choose $i'j'$ so that the inductive hypothesis gives $E_{i'j'}(u)x \in\operatorname{Fil}^{\lbrace n_{ij}+1_{i'j'}\rbrace}\cap \mathfrak{M}^\varphi = \mathfrak{M}^\varphi \cap \left( \prod_{ij} E_{ij}(u)^{n_{ij}+1_{i'j'}}\right) \mathfrak{M}$. Therefore $x \in \mathfrak{M}^\varphi \cap \left( \prod_{ij} E_{ij}(u)^{n_{ij}}\right) \mathfrak{M}$. A similar argument gives the converse inclusion.
	
	For exactness on the right after inverting $p$ we use that for every $h\geq 0$ each $x \in S[\frac{1}{p}]$ can be written as $x_1 + E(u)^hx_2$ with $x_1 \in \mathfrak{S}[\frac{1}{p}]$ and $x_2 \in S[\frac{1}{p}]$. Since $E(u)^h\mathfrak{M}^\varphi \otimes_{\mathfrak{S}} S \subset \operatorname{Fil}^{\lbrace n_{ij} \rbrace}$ for sufficiently large $h$ it follows that for each $x \in \operatorname{Fil}^{\lbrace n_{ij}\rbrace}$ there exists $x' \in \operatorname{Fil}^{\lbrace n_{ij}\rbrace} \cap \mathfrak{M}^\varphi[\frac{1}{p}]$ such that $f_{ij}(x) = f_{ij}(x')$ for each $ij$. This, combined with (1) of Lemma~\ref{filproperties}, shows that the above sequence is exact on the right after inverting $p$. 
	
	For the final assertion, if we define $\mathcal{F}'^j:=\operatorname{Fil}^{\lbrace h_1,\ldots,h_j,0,\ldots,0\rbrace} \cap \mathfrak{M}^\varphi$ then Lemma~\ref{filproperties} implies $E_j(u)^{h_j} \mathcal{F}'^{j-1} \subset \mathcal{F}'^j \subset \mathcal{F}'^{j-1}$ and $\mathcal{F}'^e = \left(\prod_j E_j(u)^{h_j}\right) \mathfrak{M}$. Also $\mathcal{F}'^j /\mathcal{F}'^{j-1}$ is $\mathcal{O}$-flat. The proof of Proposition~\ref{conv} shows that these properties uniquely determine $\mathcal{F}'^j$ so $\mathcal{F}'^j = \mathcal{F}^j$ as desired.
\end{proof}

\subsection{Integral filtrations}

Our aim is to prove the following:
\begin{proposition}\label{basis}
	There exists a $\mathbb{Z}_p[[u]]$-basis $(e_i)$ of $\mathfrak{M}^\varphi$ and integers $r_i \in [0,p]$ such that
	\begin{enumerate}
		\item $\operatorname{Fil}^{\lbrace p,0,\ldots,0\rbrace} \cap \mathfrak{M}^\varphi$ is generated by $(E_1(u)^{r_i}e_i)$
		\item $e_i = f_i + \pi g_i$ for some $f_i \in \varphi(\mathfrak{M})$ and $g_i \in \mathfrak{M}^\varphi$. 
	\end{enumerate}
\end{proposition}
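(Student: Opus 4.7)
My approach is to construct the basis in two stages: first produce an $\mathfrak{S}_{\mathcal{O}}$-basis of $\mathfrak{M}^\varphi$ adapted to the filtration $\mathcal{F}^1$ (giving condition (1)), then perturb it to achieve condition (2).

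For the first stage I would use Corollary~\ref{intfil} to identify $\mathcal{F}^1 = \mathfrak{M}^\varphi \cap E_1(u)^p \mathfrak{M}$. Setting $G^n := \mathfrak{M}^\varphi \cap E_1(u)^n \mathfrak{M}$ for $0 \leq n \leq p$ yields a decreasing filtration $\mathfrak{M}^\varphi = G^0 \supset \cdots \supset G^p = \mathcal{F}^1$ of $\mathfrak{S}_{\mathcal{O}}$-submodules with $E_1(u) G^{n-1} \subset G^n$. Using parts (1) and (3) of Lemma~\ref{filproperties} together with the $\mathcal{O}$-projectivity of the graded pieces of $\mathcal{F}^\bullet$ (coming from the definition of $\mathcal{L}^{\mu,\operatorname{conv}}$), each quotient $G^n/G^{n+1}$ should be a finite projective $\mathcal{O}$-module, injecting into an appropriate graded piece of the Hodge filtration on $D_K$. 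Compatibly lifting $\mathcal{O}$-bases of the successive graded pieces then produces an $\mathfrak{S}_{\mathcal{O}}$-basis $(e_i)$ of $\mathfrak{M}^\varphi$ with integers $r_i \in [0, p]$ such that $e_i \in G^{p - r_i}$ projects to a basis element of $G^{p - r_i}/G^{p - r_i + 1}$, whence the $E_1(u)^{r_i}e_i$ generate $\mathcal{F}^1$ as an $\mathfrak{S}_{\mathcal{O}}$-module, giving (1).

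For the second stage I would exploit the decomposition $\mathfrak{M}^\varphi = \bigoplus_{k=0}^{p-1} u^k \varphi(\mathfrak{M})$ of $\varphi(\mathfrak{S}_{\mathcal{O}})$-modules, arising because $\mathfrak{S}_{\mathcal{O}}$ is free of rank $p$ over $\varphi(\mathfrak{S}_{\mathcal{O}})$ with basis $(1, u, \ldots, u^{p-1})$. Writing $e_i = \sum_{k=0}^{p-1} u^k e_i^{(k)}$ with $e_i^{(k)} \in \varphi(\mathfrak{M})$, condition (2) translates to $e_i^{(k)} \in \pi\varphi(\mathfrak{M})$ for all $k \geq 1$. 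I would perturb each $e_i$ by an element of $G^{p - r_i + 1}$, which leaves its image in $G^{p - r_i}/G^{p - r_i + 1}$ (and hence condition (1)) unchanged, and use the freedom in these perturbations to kill the unwanted $u^k$-components modulo $\pi$.

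The main obstacle is ensuring the compatibility between the two stages: that the image of $G^{p - r_i + 1}$ in $\mathfrak{M}^\varphi/(\varphi(\mathfrak{M}) + \pi \mathfrak{M}^\varphi)$ is large enough to absorb the deviation of each $e_i$ from $\varphi(\mathfrak{M}) + \pi \mathfrak{M}^\varphi$. This is exactly where the pseudo-Barsotti--Tate hypothesis $h_1 = p$ enters, via the bound $r_i \leq p$ which keeps the perturbation space from being empty (if $r_i = p$ then $e_i \in \mathcal{F}^1$ already and no perturbation is needed to preserve (1); otherwise $G^{p - r_i + 1}$ is a proper submodule providing room to manoeuvre). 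An inductive argument treating the basis elements in decreasing order of $r_i$ should then allow all perturbations to be chosen consistently.
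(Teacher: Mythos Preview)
Your first stage is fine (modulo the cosmetic point that the proposition asks for a $\mathbb{Z}_p[[u]]$-basis, not an $\mathfrak{S}_\mathcal{O}$-basis). The second stage has a genuine gap. You need, for each $i$, some $\delta_i \in G^{p-r_i+1}$ with $e_i + \delta_i \in \varphi(\mathfrak{M}) + \pi\mathfrak{M}^\varphi$, but you give no mechanism for finding it: the filtration $G^\bullet$ is defined via $E_1(u)$-divisibility in $\mathfrak{M}$, while $\varphi(\mathfrak{M}) \subset \mathfrak{M}^\varphi$ records the $\varphi(\mathfrak{S})$-structure, and nothing in your outline links the two. Beyond the identification $\mathcal{F}^1 = G^p$ your argument uses nothing about $V$ being crystalline, so if it worked it would apply to an arbitrary Breuil--Kisin module of the correct height; that alone should make you suspicious. (Your parenthetical also has the indexing reversed: $r_i = p$ means $e_i \in G^0 = \mathfrak{M}^\varphi$, not $e_i \in \mathcal{F}^1$.)

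The paper runs in the opposite direction and uses the crystalline hypothesis essentially. It starts from a $\mathbb{Z}_p[[u^p]]$-basis $(f_i)$ of $\varphi(\mathfrak{M})$ lifting a filtration-adapted $\mathbb{Z}_p$-basis of $M_1 = f_1(\mathfrak{M}^\varphi) \subset \prod_i D_{K,i1}$, so that condition~(2) is built in from the outset, and then pushes each $f_i$ into the correct filtration level by applying an operator $f_i \mapsto f_i^{(p)}$ built from the monodromy $\mathcal{N}$ on $\mathfrak{M}^\varphi \otimes_\mathfrak{S} S$ (Lemma~\ref{lift}). The crucial input is Theorem~\ref{GLS} (Gee--Liu--Savitt, and Wang for $p=2$), which controls the $\pi$-adic denominators of powers of $\mathcal{N}$ on $\varphi(\mathfrak{M})$: it guarantees that $f_i^{(p)} \equiv f_i + \pi g_i$ modulo $E_1(u)^p(\mathfrak{M}^\varphi \otimes S[\tfrac{1}{p}])$ for some $g_i \in \mathfrak{M}^\varphi$, whence $e_i := f_i + \pi g_i$ satisfies both conditions. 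This integrality of $\mathcal{N}$ --- a genuinely nontrivial fact about crystalline Breuil--Kisin modules --- is exactly the bridge between the Hodge filtration and the Frobenius lattice $\varphi(\mathfrak{M})$ that your perturbation argument needs but does not supply.
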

Here, as in Corollary~\ref{intfil}, $\lbrace p,0,\ldots,0\rbrace$ indicates the tuple with $p$ in the $i1$-th position for each $i$ and $0$ everywhere else. Before proving the proposition we need some preparations. First take any $x= x^{(0)} \in \mathfrak{M}^\varphi \otimes_{\mathfrak{S}} S$ and inductively define 
$$
x^{(i)} = \sum_{l=0}^{i-1} \frac{H(u)^l}{l!}\mathcal{N}^l(x^{(i-1)})
$$
where $H(u) = \frac{E_1(u)}{E_1(0)}$.

\begin{lemma}\label{lift}
	If $f_{i1}(x) \in \operatorname{Fil}^{n}(D_{K,i1})$ for each $i$ and $\delta_{i} = \operatorname{min}\lbrace i,n\rbrace$ then $x^{(i)} \in \operatorname{Fil}^{\lbrace \delta_i,0,\ldots,0\rbrace}$.
\end{lemma}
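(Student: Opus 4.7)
The plan is to proceed by induction on $i$. For the base case $i=1$, the sum defining $x^{(1)}$ collapses to the $l=0$ term, so $x^{(1)} = x$. Unpacking the recursive definition of $\operatorname{Fil}^{\{\delta_1, 0, \ldots, 0\}}$: if $n = 0$ this is the whole module, while if $n \geq 1$ the required conditions reduce to the hypothesis $f_{i'1}(x) \in \operatorname{Fil}^n \subset \operatorname{Fil}^1$ together with the automatic condition $\mathcal{N}(x) \in \operatorname{Fil}^{\{0, 0, \ldots\}} = \mathfrak{M}^\varphi \otimes_{\mathfrak{S}} S$.

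The key structural fact for the inductive step is that $f_{i'1}(H(u)) = 0$ for each $i'$: indeed $H(u) = E_1(u)/E_1(0)$ with $E_1(u) = \prod_{i'} E_{i'1}(u)$, and under the decomposition $K \otimes_{\mathbb{Q}_p} E = \prod_{ij} E$ the factor $E_{i'1}(u)$ vanishes in position $(i',1)$ when $u$ is sent to $\pi$. Combined with Lemma~\ref{filproperties}(2) applied at each $(i',1)$, this yields the shift property
\[
H \cdot \operatorname{Fil}^{\{m, 0, \ldots, 0\}} \subset \operatorname{Fil}^{\{m+1, 0, \ldots, 0\}}.
\]
It follows immediately that $f_{i'1}(x^{(i)}) = f_{i'1}(x^{(i-1)}) = \cdots = f_{i'1}(x) \in \operatorname{Fil}^n \subset \operatorname{Fil}^{\delta_i}$, verifying the first of the two conditions in the recursive definition of $\operatorname{Fil}^{\{\delta_i, 0, \ldots, 0\}}$.

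For the second condition, $\mathcal{N}(x^{(i)}) \in \operatorname{Fil}^{\{\delta_i - 1, 0, \ldots, 0\}}$, I would differentiate the defining sum by Leibniz using $\mathcal{N}(H^l) = l H^{l-1} \mathcal{N}(H)$, arriving at the identity
\[
\mathcal{N}(x^{(i)}) = \mathcal{N}(H) \cdot T_{i-1}\bigl(\mathcal{N}(x^{(i-1)})\bigr) + T_i\bigl(\mathcal{N}(x^{(i-1)})\bigr),
\]
where $T_k(y) := \sum_{l=0}^{k-1} \frac{H^l}{l!} \mathcal{N}^l(y)$ is the operator used in the definition of $x^{(k)}$. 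The inductive hypothesis applied to $x^{(i-1)}$ yields $\mathcal{N}^l(x^{(i-1)}) \in \operatorname{Fil}^{\{\delta_{i-1} - l, 0, \ldots, 0\}}$, and combining this with the shift property for $H$ controls the filtration level of each summand in $T_k(\mathcal{N}(x^{(i-1)}))$.

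The main obstacle I anticipate is the case $n \geq i$, where $\delta_i - 1 = i - 1$ strictly exceeds what the naive term-by-term estimate on $T_k(\mathcal{N}(x^{(i-1)}))$ provides (which only sees $\delta_{i-1} - 1 = i-2$). Here the extra level of filtration must be gained by exploiting a cancellation built into the telescoping structure of the two terms above, using the computation $f_{i'1}(\mathcal{N}(H)) = 1$ (arising from $\mathcal{N}(H)_{i'} = u/\kappa_{i'1}(\pi)$ in the $i'$-th component) together with the fact that $f_{i'1}$ of the whole expression reduces back to a quantity controlled by the original hypothesis on $f_{i'1}(x)$. Once this fine bookkeeping is carried out, the induction closes and yields $x^{(i)} \in \operatorname{Fil}^{\{\delta_i, 0, \ldots, 0\}}$ as desired.
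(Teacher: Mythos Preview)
Your inductive framework, the base case, the verification of the $f_{i'1}$-condition, and your Leibniz identity
\[
\mathcal{N}(x^{(i)}) = \mathcal{N}(H)\cdot T_{i-1}\bigl(\mathcal{N}(x^{(i-1)})\bigr) + T_i\bigl(\mathcal{N}(x^{(i-1)})\bigr)
\]
are all correct and match the paper's approach. The gap is precisely where you flag it, and your proposed resolution does not work as stated.

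The paper regroups the displayed expression as
\[
\frac{H^{i-1}}{(i-1)!}\,\mathcal{N}^i(x^{(i-1)}) \;+\; \sum_{l=1}^{i-1}(1+\partial H)\,\frac{H^{l-1}}{(l-1)!}\,\mathcal{N}^l(x^{(i-1)}),
\]
and the entire argument hinges on the algebraic identity $1+\partial H(u) = -H(u)$. This turns the coefficient $(1+\partial H)H^{l-1}$ into $-H^l$, so each summand in the second group lies in $\operatorname{Fil}^{\{\delta_{i-1}-l+l,0,\ldots,0\}} = \operatorname{Fil}^{\{\delta_{i-1},0,\ldots,0\}}$ by the shift property combined with the inductive bound $\mathcal{N}^l(x^{(i-1)}) \in \operatorname{Fil}^{\{\delta_{i-1}-l,0,\ldots,0\}}$. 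The leading term is handled by the cruder observation $H^{i-1}\cdot(\text{anything}) \in \operatorname{Fil}^{\{i-1,0,\ldots,0\}} \subset \operatorname{Fil}^{\{\delta_{i-1},0,\ldots,0\}}$. Note also that the paper aims for $\operatorname{Fil}^{\{\delta_{i-1},0,\ldots,0\}}$ rather than $\operatorname{Fil}^{\{\delta_i-1,0,\ldots,0\}}$; since $\delta_{i-1} \geq \delta_i-1$, this is stronger and unifies the two regimes you were treating separately.

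Your alternative plan via $f_{i'1}(\mathcal{N}(H)) = 1$ does not close the gap. Applying $f_{i'1}$ to your formula kills all $l\ge 1$ terms in each $T_k$ and gives $f_{i'1}(\mathcal{N}(x^{(i)})) = 2\,f_{i'1}(\mathcal{N}(x^{(i-1)}))$; the inductive hypothesis places this only in $\operatorname{Fil}^{\delta_{i-1}-1}$, one step short of the needed $\operatorname{Fil}^{\delta_i-1}$ when $i\le n$. The hypothesis on $f_{i'1}(x)$ controls the value of $x$, not of $\mathcal{N}(x)$, so it cannot supply the missing level. The gain has to come from the scalar factor $1+\partial H$ itself, which is the point you are missing.
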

\begin{proof}
	It suffices to show $\mathcal{N}(x^{(i)}) \in \operatorname{Fil}^{\lbrace \delta_i-1,0,\ldots,0\rbrace}$. Since $\delta_{i-1} \geq \delta_{i} -1$ we may instead show $\mathcal{N}(x^{(i)}) \in \operatorname{Fil}^{\lbrace \delta_{i-1},0,\ldots,0\rbrace}$. Writing $\partial = -u\frac{d}{du}$ we compute
	$$
	\begin{aligned}
		\mathcal{N}(x^{(i)}) &= \sum_{l=0}^{i-1} \left( \frac{H(u)^{l-1}\partial(H(u))}{(l-1)!} + \frac{H(u)^l}{l!}\mathcal{N}^{l+1}(x^{i-1}) \right) \\
		&=\underbrace{\frac{H(u)^{i-1}}{(i-1)!} \mathcal{N}^i(x^{(i-1)})}_{(a)} + \sum_{l=1}^{i-1}\underbrace{(1+ \partial(H(u)))\frac{H(u)^{l-1}}{(l-1)!} \mathcal{N}^l(x^{(i-1)})}_{(b)}
	\end{aligned}
	$$
	If $x \in \operatorname{Fil}^{\lbrace r,0,\ldots,0\rbrace}$ then $H(u)^lx \in \operatorname{Fil}^{\lbrace r+l,0,\ldots,0 \rbrace}$. Therefore $(a)$ is contained in $\operatorname{Fil}^{\lbrace i-1,0,\ldots,0\rbrace} \subset \operatorname{Fil}^{\lbrace\delta_{i-1},0,\ldots,0\rbrace}$. The inductive hypothesis implies $x^{(i-1)} \in \operatorname{Fil}^{\lbrace \delta_{i-1},0,\ldots,0\rbrace}$ and so $\mathcal{N}^l(x^{(i-1)}) \in \operatorname{Fil}^{\lbrace \delta_{i-1} - l,0,\ldots,0\rbrace}$. Since $1+\partial(H(u)) = -H(u)$, each $(b)$ term is contained $\operatorname{Fil}^{\lbrace \delta_{i-1}-l+l,0,\ldots,0\rbrace}= \operatorname{Fil}^{\lbrace \delta_{i-1},0,\ldots,0\rbrace}$ also.
\end{proof}

To apply Lemma~\ref{lift} in the proof of Proposition~\ref{basis} we require some control of the denominators appearing in the operator $\mathcal{N}$. This is given by the following result. It is here that the particular choice of $\pi$ from Section~\ref{notation} when $p=2$ is important.
\begin{theorem}[Gee--Liu--Savitt,Wang]\label{GLS}
	For each $x \in \varphi(\mathfrak{M})$ and $b\geq 1$ there exist $x_i \in \varphi(\mathfrak{M})$ and $a_i \in E$ such that
	$$
	\mathcal{N}^b(x) = \sum_{i=0}^{\infty} E_1(u)^i a_i x_i
	$$
	with $\pi^{p-i} \mid a_i$ for $i=0,\ldots,p-1$.
\end{theorem}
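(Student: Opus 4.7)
The plan is to invoke the results of Gee--Liu--Savitt \cite{GLS15} (for $p>2$) and Wang \cite{Wang17} (for $p=2$), to which the attribution in the theorem alludes; the argument proceeds by induction on $b$, with the base case being essentially the content of those papers and the induction being a bookkeeping exercise in $\pi$-adic valuations.

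For the base case $b = 1$, I would write $x = \varphi(y)$ for some $y \in \mathfrak{M}$ and work inside the identification $\mathfrak{M}^\varphi \otimes_{\mathfrak{S}} S[\tfrac{1}{p}] \cong D \otimes_{K_0} S[\tfrac{1}{p}]$ recalled in Section~\ref{filonphi}. Under this identification $\mathcal{N}$ kills $D$ and acts only on the $S$-factor by $-u\,d/du$, so $\mathcal{N}(\varphi(y))$ is determined by the $S[\tfrac{1}{p}]$-valued coefficients of $\varphi(y)$ with respect to a $K_0 \otimes_{\mathbb{Q}_p} E$-basis of $D$. Taylor expanding these coefficients in powers of $u - \pi$, or equivalently in powers of $E_1(u)$, produces an expansion $\mathcal{N}(x) = \sum E_1(u)^i a_i x_i$ of the desired form. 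The divisibilities $\pi^{p-i} \mid a_i$ for $i < p$ are precisely the integrality estimates established in \cite[Section 3]{GLS15} and, when $p=2$, in \cite[Section 2]{Wang17}; in the latter they hinge on the condition $K_\infty \cap K(\mu_{p^\infty}) = K$ supplied by Lemma~\ref{1.1.1}.

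For the inductive step, assuming the result for $b$, I would apply $\mathcal{N}$ to the expansion $\mathcal{N}^b(x) = \sum E_1(u)^i a_i x_i$ and use the Leibniz rule:
$$
\mathcal{N}^{b+1}(x) \;=\; \sum_i i\, E_1(u)^{i-1}\bigl(-u E_1'(u)\bigr) a_i x_i \;+\; \sum_i E_1(u)^i a_i\, \mathcal{N}(x_i).
$$
The second sum is handled by applying the $b=1$ case to each $\mathcal{N}(x_i)$ with $x_i \in \varphi(\mathfrak{M})$. The first sum shifts the $E_1$-index at the cost of a factor $-u E_1'(u)$, which evaluates to something divisible by $\pi$ at $u=\pi$ (so that expanding again in powers of $E_1(u)$ introduces at worst one extra $\pi$ in the constant term). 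A direct $\pi$-adic bookkeeping then shows that the required divisibilities propagate.

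The main obstacle is the case $p=2$: here the na\"ive denominator estimates are too weak and must be replaced by the refined argument of \cite{Wang17}, which crucially exploits the special choice of uniformiser from Lemma~\ref{1.1.1}. In practice this is the only place where some genuine work is hidden, so I would expect the essential content of the proof to be a reference to \cite{GLS15} and \cite{Wang17} together with the induction outlined above.
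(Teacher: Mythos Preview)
Your approach matches the paper's: induction on $b$ via the Leibniz rule (the paper uses the clean identity $\partial(E_1(u)) = E_1(0) - E_1(u)$ with $\pi \mid E_1(0)$ for the inductive step), with the base case drawn from \cite{GLS} and \cite{Wang17}. One point of precision: what those references actually supply is the containment $\mathcal{N}(x) \in \mathfrak{M}^\varphi \otimes_{\mathfrak{S}} u^p S'$ for $S' = W(k)[[u^p, u^{ep}/p]][\tfrac{1}{p}] \cap S$, and the paper then explicitly Taylor-expands elements of $u^p S'$ in powers of $E_1(u)$ (the key fact being $\pi^{pj}/e(pj)! \in \mathcal{O}$) to obtain the divisibilities $\pi^{p-i}\mid a_i$---so the expansion step you mention is indeed required, but it is carried out here rather than in the cited papers.
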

\begin{proof}
	If $\partial = -u\frac{d}{du}$ then  $\partial(E_1(u)) = E_1(0) - E_1(u)$. Therefore
	$$
	\mathcal{N}(E_1(u)^i a_i x_i) = E_1(u)^ia_i(1 - i)\mathcal{N}(x_i) +iE_1(u)^{i-1}E_1(0)a_ix_i
	$$
	Since $E_1(0)$ is divisible by $\pi$ in $W(k)\otimes_{\mathbb{Z}_p}\mathcal{O}$ this shows, by an easy induction, that if the statement holds for $b=1$ then it holds for $b\geq 1$.
	
	If $p>2$ then \cite[4.7]{GLS} implies that $\mathcal{N}(x)$ is contained in $\mathfrak{M}^\varphi \otimes_{\mathfrak{S}} u^pS'$ for $S' = W(k)[[u^p,\tfrac{u^{ep}}{p}]][\tfrac{1}{p}] \cap S$ and $x \in \varphi(\mathfrak{M})$. When $p=2$ the same is true by the results in \cite{Wang17}.
	Therefore, the theorem will follow if every $s \in u^pS'$ can be written as 
	$$\sum_{i=0}^{\infty} E_1(u)^i a_i$$ for $a_i \in W(k) \otimes_{\mathbb{Z}_p} E$ with $\pi^{p-i}\mid a_i$ for $i=0,\ldots,p-1$.  Here we are viewing $s$ as an element of $K_0[[u]]\otimes_{\mathbb{Q}_p} E$ and so also as a tuple $(s_i) \in \prod_i E[[u]]$. Each $s_i$ can be written as 
	$$
	\begin{aligned}
	\sum_{j=0}^\infty a_{ij} \frac{u^{p(j+1)}}{e(pj)!} &= \sum_{j=1}^{\infty} \frac{a_{ij}}{e(pj)!} \left( \sum_{l=0}^{p(j+1)} \binom{p(j+1)}{l}(u-\pi_{i1})^l\pi_{i1}^{p(j+1)-l}\right)\\ &= \sum_{l=0}^\infty (u-\pi_{i1})^l \underbrace{\left( \sum_{j+1 \geq l/p} \frac{a_{ij}}{e(pj)!} \binom{p(j+1)}{l} \pi_{i1}^{p(j+1)-l} \right)}_{a_l}
	\end{aligned}
	$$
	for some $a_{ij} \in \mathcal{O}$ and $\pi_{i1} = \kappa_{i1}(\pi)$. We must show that the $a_l$ term is divisible by $\pi^{p-l}$ in $\mathcal{O}$ for $l=0,\ldots,p-1$. This follows from the observation that $\frac{\pi_{i1}^{pj}}{e(pj)!} \in \mathcal{O}$.
\end{proof}

\begin{corollary}
	Suppose that $x\in \varphi(\mathfrak{M})$. Then for each $i \leq p$ there are $x_1,\ldots,x_{p-1} \in \mathfrak{M}^\varphi$ 
	$$
	x^{(i)} -x + \pi^{p}x_1 + E_{1}(u)\pi^{p-1}x_2+\ldots + E_{1}(u)^{p-1} \pi x_{p-1}  \in E_{1}(u)^p\mathfrak{M}^\varphi \otimes_{\mathfrak{S}} S[\tfrac{1}{p}]
	$$

\end{corollary}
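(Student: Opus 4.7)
The plan is to proceed by induction on $i$, reducing the problem term-by-term to an application of Theorem~\ref{GLS}. The cases $i = 0, 1$ are trivial since the recursion gives $x^{(i)} = x$, so one may assume $i \geq 2$.

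The first step is the structural claim that, for $i \leq p$, one can write
$$x^{(i)} - x = \sum_{b \geq 1} P_b(H(u))\,\mathcal{N}^b(x)$$
where each $P_b \in \mathbb{Z}_{(p)}[T]$ is a polynomial with $p$-integral rational coefficients and only finitely many are nonzero. The key identity is $\mathcal{N}(H(u)) = 1 - H(u)$, which follows from $\mathcal{N}(E_1(u)) = E_1(0) - E_1(u)$ (already noted in the proof of Theorem~\ref{GLS}). Since $\mathcal{N}$ is a derivation, Leibniz inductively gives $\mathcal{N}^k(H(u)^l) \in \mathbb{Z}[H(u)]$ for all $k,l \geq 0$. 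The recursion $x^{(i)} = \sum_{l=0}^{i-1} H(u)^l \mathcal{N}^l(x^{(i-1)})/l!$ then preserves the shape above: the $l=0$ summand reproduces $x^{(i-1)}$, while each $l \geq 1$ summand contributes only terms of the form (polynomial in $H(u)$)$\,\cdot \mathcal{N}^{b'}(x)$ with $b' \geq 1$. In particular no stray $H(u)^l \cdot x$ term with $l \geq 1$ is ever produced, and the factorials $1/l!$ for $l \leq p-1$ are $p$-units so the coefficients stay in $\mathbb{Z}_{(p)}$.

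Next, apply Theorem~\ref{GLS} to each $\mathcal{N}^b(x)$, writing $\mathcal{N}^b(x) = \sum_j E_1(u)^j a_j^{(b)} x_j^{(b)}$ with $x_j^{(b)} \in \varphi(\mathfrak{M})$ and $\pi^{p-j} \mid a_j^{(b)}$ for $j < p$. Substituting $H(u)^l = E_1(u)^l/E_1(0)^l$, each summand $p_{b,l}\, H(u)^l \mathcal{N}^b(x)$ (with $p_{b,l} \in \mathbb{Z}_{(p)}$ the coefficient of $H(u)^l$ in $P_b$) becomes $\sum_j p_{b,l}\, E_1(u)^{l+j} a_j^{(b)} x_j^{(b)}/E_1(0)^l$. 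When $l \geq p$, or when $l < p$ but $l + j \geq p$, the factor $E_1(u)^{l+j}/E_1(0)^l$ lies in $E_1(u)^p \mathfrak{S}_E$ (since $E_1(0)$ is a unit in $\mathfrak{S}_E$), so contributes only to the error $E_1(u)^p \mathfrak{M}^\varphi \otimes_{\mathfrak{S}} S[\tfrac{1}{p}]$. When $l + j = m < p$, using that $\pi$ divides $E_1(0)$ exactly once, the scalar $p_{b,l}\, a_j^{(b)}/E_1(0)^l$ is divisible by $\pi^{(p-j)-l} = \pi^{p-m}$.

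Grouping contributions by $m = l + j$ and summing over $b$, the coefficient of $E_1(u)^m$ in $x^{(i)} - x$ modulo $E_1(u)^p$ takes the form $\pi^{p-m} y_m$ for some $y_m \in \mathfrak{M}^\varphi$ (an $\mathcal{O}$-linear combination of the $x_j^{(b)} \in \varphi(\mathfrak{M})$). Setting $x_{m+1} = -y_m$ with the appropriate indexing yields the required congruence. The step I expect to be most delicate is the structural claim: verifying by induction that the recursion preserves the precise shape $x + \sum_{b \geq 1} P_b(H(u)) \mathcal{N}^b(x)$ without ever introducing a stray $H(u)^l \cdot x$ term (such a term would lack the needed $\pi^p$-divisibility upon substitution) and with denominators remaining $p$-integral. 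Once that claim is in hand, the rest reduces to routine $\pi$-valuation bookkeeping from Theorem~\ref{GLS}.
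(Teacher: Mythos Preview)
Your approach is correct and essentially the same as the paper's: both reduce to a structural decomposition of $x^{(i)}-x$ in terms of $H(u)$ and iterates $\mathcal{N}^b(x)$, then invoke Theorem~\ref{GLS} and do the $\pi$-valuation bookkeeping. The only difference is in how the structural claim is phrased. The paper shows $x^{(i)}-x$ is a $\mathbb{Z}$-linear combination of terms $\tfrac{H(u)^a}{a'!}\,\mathcal{N}^b(x)$ with $a,b\geq 1$ and $1\leq a'\leq a$, proving along the way an auxiliary lemma that $\partial^k(H^a)/a!$ is a $\mathbb{Z}$-combination of $H^{a'}/a'!$ for $1\leq a'\leq a$; this formulation is valid for arbitrary $i$. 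You instead exploit the hypothesis $i\leq p$ directly to note that every factorial appearing is at most $(p-1)!$, hence a $p$-unit, so the coefficients lie in $\mathbb{Z}_{(p)}$. Your version is a bit more streamlined for the case at hand, while the paper's bound $a'\leq a$ is sharper and does the necessary work uniformly in $i$ (the constraint $a'\leq a$ is exactly what forces $a'<p$ once one restricts to terms with $a+j<p$). Either route yields the same conclusion after substituting $H(u)=E_1(u)/E_1(0)$ and applying Theorem~\ref{GLS}.
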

\begin{proof}
	Using Theorem~\ref{GLS} it suffices to show that $x^{(i)}-x$ can be written as a $\mathbb{Z}$-linear combination of terms $\frac{H(u)^a}{a'!}\mathcal{N}^b(x)$ for $a,b\geq 1$ and $1\leq a'\leq a$. Arguing by induction it is enough to show
	$$
	\frac{H(u)^l}{l!}\mathcal{N}^l(\frac{H(u)^a}{a'}\mathcal{N}^b(x)) = \sum_{k=0}^l \binom{l}{k} \frac{H(u)^l \partial^k(H(u)^a)}{l!a'!} \mathcal{N}^{l-k+b}(x)
	$$
	is a $\mathbb{Z}$-linear combination of such terms. This follows from the claim that $\frac{\partial^k(H(u)^a)}{a!}$ is a $\mathbb{Z}$-linear combination of terms of the form $\frac{H(u)^{a'}}{a'!}$ for $1 \leq a' \leq a$. To see this note that $\partial(H(u)^a) = aH(u)^{a-1}(-1-H(u))$ and so $\partial^k(H(u)^a)/a!$ equals
	$$
	\begin{aligned}
		\frac{1}{(a-1)!} \partial^{k-1}(H(u)(-1-H(u))) &= \frac{1}{(a-1)!} \sum_{j=0}^{k-1} \binom{k-1}{j} \partial^j(H(u)^{a-1}) \partial^{k-1-j}(-1-H(u)) \\
		&= \frac{1}{(a-1)!}(-1-H(u)) \sum_{j=0}^{k-1} \binom{k-1}{j} (-1)^{k-1-j}\partial^j(H(u)^{a-1})
	\end{aligned}
	$$
	(for the second equality we've used that $\partial^n(H(u)) = (-1)^n(-1-H(u))$ for $n >0$). Inducting on $k$ finishes the proof.
\end{proof}
\begin{proof}[Proof of Proposition~\ref{basis}]
	Set $M_j$ equal to the image of $\mathfrak{M}^\varphi$ under $f_j := \prod_i f_{ij}$. This is a $W(k)\otimes_{\mathbb{Z}_p} \mathcal{O}$-lattice inside $\prod_i D_{K,ij}$ which we equip with the filtration 
	$$
	\operatorname{Fil}^{n}(M_{j}) = M_{j} \cap \prod_i  \operatorname{Fil}^n(D_{K,ij})
	$$
	Choose a $\mathbb{Z}_p$-basis $(\overline{e}_i)$ of $M_1$ adapted to the filtration, i.e. so that there exists integers $r_i$ with $\operatorname{Fil}^n(M)$ is generated by those $\overline{e}_i$ with $n\leq r_i$. This is possible since the graded pieces of the filtration on $M_1$ are $p$-torsionfree by construction.
	
	Note that $f_{1}$ induces a surjection $\varphi(\mathfrak{M}) \rightarrow \mathfrak{M}^\varphi \rightarrow  M_1$ and so we can lift $(\overline{e}_i)$ to a $\mathbb{Z}_p[[u^p]]$-basis $(f_i)$ of $\varphi(\mathfrak{M})$. Lemma~\ref{lift} implies 
	$$
	f_i^{(p)} \in \operatorname{Fil}^{\lbrace \operatorname{min}\lbrace r_i,p\rbrace,0,\ldots,0\rbrace}
	$$
	If $f_i^{(p)} = f_i + \pi f_i' + H(u)^pf_i''$ with $f_i' = \pi^{p-1} f_{i,1} + \ldots + E_{ij}(u)^{p-1} f_{i,p-1}$ as in Lemma~\ref{lift} then 
	$$
	e_i := f_i +\pi f_i' \in \operatorname{Fil}^{\lbrace \operatorname{min}\lbrace r_i,p\rbrace,0,\ldots,0\rbrace} \cap \mathfrak{M}^\varphi
	$$
	To finish the proof we show by induction on $n$ that $\operatorname{Fil}^{\lbrace n,0,\ldots,0\rbrace} \cap \mathfrak{M}^\varphi$ is equal to the submodule $Y_n$ generated over $\mathbb{Z}_p[[u]]$ by $E_1(u)^{\operatorname{max}\lbrace n-r_i,0\rbrace}e_i$ whenever $n \leq p$. The case $n=0$ is clear so assume that assertion holds for $n-1$. Since the image of $e_i$ in $M$ equals that of $f_i$, the image of $Y_n$ in $M$ equals $\operatorname{Fil}^n(M)$ and so contains the image of $\operatorname{Fil}^{\lbrace n,0,\ldots,0\rbrace}\cap \mathfrak{M}^\varphi$. Corollary~\ref{intfil} shows that the kernel of $\operatorname{Fil}^{\lbrace n,0,\ldots,0\rbrace}\cap \mathfrak{M}^\varphi \rightarrow M$ equals $E_1(u)\operatorname{Fil}^{\lbrace n-1,0,\ldots,0\rbrace} \cap \mathfrak{M}^\varphi$ which, by the inductive hypothesis, equals $E_1(u)Y_{n-1}$. Since $E_1(u)Y_{n-1} \subset Y_n$ we conclude that $Y_n = \operatorname{Fil}^{\lbrace n,0,\ldots,0\rbrace}\cap \mathfrak{M}^\varphi$ as desired.
\end{proof}

\subsection{Application to strong divisibility}

Maintain the notation from above but assume additionally that $\mu$ is pseudo-Barsotti--Tate, i.e. that $h_1=p$ and $h_2=\ldots=h_e=1$.

\begin{proposition}\label{hodgetypes}
	If $(\mathfrak{M},\alpha,\mathcal{F})$ corresponds to an $A$-valued point of $\mathcal{L}^{\mu,\operatorname{conv}}_R$ for any finite flat $\mathcal{O}$-algebra $A$ then
	$$
	(\mathfrak{M}_{\mathbb{F}},\mathcal{F}_{\mathbb{F}}) := (\mathfrak{M},\mathcal{F})\otimes_{\mathcal{O}} \mathbb{F}
	$$
	is an object of $\operatorname{Mod}^{\operatorname{SD}}_{\mathcal{F}}$. If $A = \mathcal{O}$ then this object has Hodge type $\mu$ (in the sense of Section~\ref{reform}) .
\end{proposition}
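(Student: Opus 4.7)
The plan is to reduce to the case $A = \mathcal{O}$ and then apply Proposition~\ref{basis} directly.

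\emph{Reduction to $A = \mathcal{O}$.} The strong divisibility condition on $(\mathfrak{M}_{\mathbb{F}},\mathcal{F}_{\mathbb{F}})$ can be checked at each closed point $x$ of the image of the $A$-valued point in $\mathcal{L}^{\mu,\operatorname{conv}}_R$. Since by definition $\mathcal{L}^{\mu,\operatorname{conv}}_R$ is the closure of $\operatorname{Spec}R^{\mu}[\tfrac{1}{p}]$, every such $x$ is a specialisation of some $E'$-valued generic point, for $E'/E$ a finite extension; the remark after Proposition~\ref{conv} lifts this $E'$-point uniquely to an $\mathcal{O}_{E'}$-valued point of $\mathcal{L}^{\mu,\operatorname{conv}}_R$ whose mod-$p$ fibre can be arranged to be $x$ (using the $\mathbb{Z}_p$-flatness of $\mathcal{L}^{\mu,\operatorname{conv}}_R$ and the uniqueness of integral filtrations from the proof of Proposition~\ref{conv}). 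After enlarging $\mathcal{O}$ so that $\mathcal{O} = \mathcal{O}_{E'}$, we may therefore assume $A = \mathcal{O}$.

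\emph{Applying Proposition~\ref{basis}.} Proposition~\ref{basis} supplies a $\mathbb{Z}_p[[u]]$-basis $(e_i)$ of $\mathfrak{M}^\varphi$ and integers $r_i \in [0,p]$ such that $\mathcal{F}^1 = \operatorname{Fil}^{\lbrace p,0,\ldots,0\rbrace}\cap \mathfrak{M}^\varphi$ is generated by $(E_1(u)^{r_i}e_i)$, together with $e_i = f_i + \pi g_i$ for some $f_i \in \varphi(\mathfrak{M})$ and $g_i \in \mathfrak{M}^\varphi$. Reducing modulo $p$ I would use two observations. First, since $v_p(\kappa_{i1}(\pi)) = 1/e > 0$ we have $\pi \in \mathfrak{m}_\mathcal{O}$, so $\overline{e}_i = \overline{f}_i$ in $\mathfrak{M}^\varphi_{\mathbb{F}}$ and $(\overline{f}_i)$ remains an $\mathbb{F}_p[[u^p]]$-basis of $\varphi(\mathfrak{M}_{\mathbb{F}})$. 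Second, under the product decomposition of Section~\ref{coeff} each factor $u - \kappa_{i1}(\pi)$ of $E_1(u)$ reduces modulo $\mathfrak{m}_\mathcal{O}$ to $u$, so $\overline{E_1(u)} = u$ in $\mathfrak{S}_{\mathbb{F}}$. Combining, $\mathcal{F}^1_{\mathbb{F}}$ is generated by $(u^{r_i}\overline{f}_i)$, which verifies Definition~\ref{SD} with exponents $r_i \in [0,p]$, so $(\mathfrak{M}_\mathbb{F},\mathcal{F}_\mathbb{F}) \in \operatorname{Mod}^{\operatorname{SD}}_{\mathcal{F}}$.

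\emph{Hodge type when $A = \mathcal{O}$.} Using Corollary~\ref{intfil} I would identify $\mathcal{F}^j = \operatorname{Fil}^{\lbrace h_1,\ldots,h_j,0,\ldots,0\rbrace} \cap \mathfrak{M}^\varphi$ and unwind the exact sequences provided there. The $k\otimes_{\mathbb{F}_p}\mathbb{F}$-modules $\overline{\mathcal{G}}^{j} = \mathcal{F}^j/u\mathcal{F}^j$ (with $\operatorname{Fil}^1$ the image of $\mathcal{F}^{j+1}$) match, after reduction modulo $p$ and $u$, the corresponding graded pieces of the filtration on $D_{K,j} = \prod_i D_{K,ij}$; similarly for $\overline{\mathfrak{P}}$ via the $j=1$ slice. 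Since $D_K$ has Hodge type $\mu$, the tuple $(\overline{\mathfrak{P}},\overline{\mathcal{G}}^1,\ldots,\overline{\mathcal{G}}^{e-1})$ has Hodge type $\mu$ in the sense of Section~\ref{reform}.

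The main obstacle is the first step: ensuring that every closed point of the image of the $A$-valued point in $\mathcal{L}^{\mu,\operatorname{conv}}_R$ is realised as the mod-$p$ reduction of an $\mathcal{O}_{E'}$-valued point, without begging the very smoothness result we are aiming for. Once this reduction is in place the rest is essentially bookkeeping, translating the integral data of Proposition~\ref{basis} and Corollary~\ref{intfil} through reduction modulo $p$ into the strong divisibility basis and the claimed Hodge type.
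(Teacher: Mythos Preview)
Your reduction to $A=\mathcal{O}$ is unnecessary and, as you yourself flag, contains a real gap. The paper's argument for the first part is a single sentence: view $\mathfrak{M}$ as an $\mathfrak{S}_{\mathcal{O}}$-module (forgetting the $A$-structure) and $V_\alpha$ as a representation on a finite free $\mathcal{O}$-module; then Proposition~\ref{basis} applies verbatim. The point is that the hypotheses of Section~\ref{filonphi} only use that $\mathfrak{M}$ is the Breuil--Kisin module over $\mathcal{O}$ attached to a crystalline $\mathcal{O}$-lattice with Hodge--Tate weights in the required range, not that the underlying Galois module has any particular $\mathbb{F}$-rank or comes from the original $R_{V_{\mathbb{F}}}$. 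Forgetting the $A$-action changes which framed deformation ring one is implicitly working over, but Proposition~\ref{basis} is indifferent to that.

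Your proposed reduction is also problematic on its own terms. The object $(\mathfrak{M}_{\mathbb{F}},\mathcal{F}_{\mathbb{F}}) = (\mathfrak{M},\mathcal{F})\otimes_{\mathcal{O}}\mathbb{F}$ is a single object of $\operatorname{Mod}_{\mathcal{F}}$, of $\mathfrak{S}_{\mathbb{F}}$-rank equal to $(\operatorname{rank}_{\mathcal{O}} A)\cdot(\operatorname{rank}_A \mathfrak{M})$. Checking strong divisibility for the fibres $\mathfrak{M}\otimes_A\kappa(x)$ over closed points of $\operatorname{Spec}A$ does not obviously say anything about this larger object, since the nilpotent structure in $A\otimes_{\mathcal{O}}\mathbb{F}$ is invisible at the residue fields. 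So even if you could produce the $\mathcal{O}_{E'}$-lifts you want, you would still need a separate argument to assemble them into a basis as in Definition~\ref{SD}.

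Your treatment of the case $A=\mathcal{O}$ (reducing the output of Proposition~\ref{basis} modulo $\mathfrak{m}_{\mathcal{O}}$) is correct and matches the paper. For the Hodge type claim your sketch is headed in the right direction but glosses over the one non-formal point: one must check that the graded pieces of the integral filtrations $\overline{\mathcal{F}}^{j-1}=\mathcal{F}^{j-1}/E_j(u)\mathcal{F}^{j-1}$ are $\mathcal{O}$-flat, so that their types survive reduction modulo $\mathfrak{m}_{\mathcal{O}}$. For $j>1$ this is immediate from the two-step shape of the filtration; for $j=1$ the paper uses the explicit basis $(e_i)$ from Proposition~\ref{basis} to see directly that the graded pieces are $\mathcal{O}$-free.
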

\begin{proof}
	The first part follows immediately by viewing $\mathfrak{M}$ as an $\mathfrak{S}_{\mathcal{O}}$-module rather than an $\mathfrak{S}_A$-module and $V_\alpha$ as an $\mathcal{O}$-representation rather than a representation on an $A$-module, and then applying Proposition~\ref{basis}.
	
	For the second part, from $(\mathfrak{M},\mathcal{F})$ we obtain filtered $W(k) \otimes_{\mathbb{Z}_p} \mathcal{O}$-modules for $j=1,\ldots,e$
	$$
	\overline{\mathcal{F}}^{j-1} = \mathcal{F}^{j-1} / E_j(u),\qquad  \operatorname{Fil}^n(\overline{\mathcal{F}}^{j-1}) = \text{the image of $E_j(u)^n\mathcal{F}^j \cap \mathcal{F}^{j-1}$}
	$$
	Corollary~\ref{intfil} implies that $f_j := \prod_i f_{ij}$ induces an embedding $\overline{\mathcal{F}}^{j-1} \hookrightarrow \prod_i D_{K,ij}$ of filtered modules, which becomes an isomorphism after inverting $p$. Therefore the $e$-tuple of filtered $K_0 \otimes_{\mathbb{Q}_p} E$-modules $\overline{\mathcal{F}}^{j-1}[\frac{1}{p}]$ has Hodge type $\mu$ (in the sense of Section~\ref{Hodgetype}). 
	
	In general the graded pieces of the $\overline{\mathcal{F}}^{j-1}$ may not be $\mathcal{O}$-flat. However, we will show that this is the case when $\mu$ is pseudo-Barsotti--Tate. This will imply that the $e$-tuple $\overline{\mathcal{F}}^{j-1} \otimes_{\mathcal{O}} \mathbb{F}$ of filtered $k \otimes_{\mathbb{F}_p} \mathbb{F}$-modules also has Hodge type $\mu$. Firstly, consider for $j>1$. Then
	$$
	\operatorname{Fil}^n(\overline{\mathcal{F}}^{j-1}) = \begin{cases}
		0 & \text{for $n\geq 1$}\\ 
		\text{the image of $\mathcal{F}^j$} & \text{for $n=0$} \\
		\overline{\mathcal{F}}^{j-1} & \text{for $n<0$} 
	\end{cases}
	$$
 so the graded pieces are clearly $\mathcal{O}$-flat. This also shows that $\overline{\mathcal{F}}^{j-1} \otimes_{\mathcal{O}} \mathbb{F} = \overline{\mathcal{F}}^{j-1}_{\mathbb{F}}$ where $\overline{\mathcal{F}}^{j-1}_{\mathbb{F}}$ is the filtration attached to $(\mathfrak{M}_{\mathbb{F}},\mathcal{F}_{\mathbb{F}})$ as in Section~\ref{reform}. We claim that the same assertions hold when $j=1$. For this we use the notation from Proposition~\ref{reform}: after recalling that $\operatorname{Fil}^{\lbrace p,0,\ldots,0\rbrace} \cap \mathfrak{M}^\varphi = \mathcal{F}^1$ we have $\mathfrak{M}^\varphi \cap E_1(u)^{n}\mathcal{F}^1$ generated over $\mathbb{Z}_p[[u]]$ by those $E(u)^{\operatorname{max}\lbrace 0,r_i +n\rbrace}e_i$. Therefore $\operatorname{Fil}^n(\overline{\mathcal{F}}^0)$ is generated over $\mathbb{Z}_p$ by the images of those $e_i$'s with $r_i + n \leq n$. This shows the graded pieces are $\mathcal{O}$-flat. It also shows that the map $\overline{\mathcal{F}}^0 \otimes_{\mathcal{O}} \mathbb{F} \rightarrow \overline{\mathfrak{M}}_{\mathbb{F}}$ (where again $\overline{\mathfrak{M}}_{\mathbb{F}}$ is equipped with the filtration as in Section~\ref{reform}) is an isomorphism of filtered $k\otimes_{\mathbb{F}_p} \mathbb{F}$-modules since an identical description of the filtered pieces on  $\overline{\mathfrak{M}}_{\mathbb{F}}$ can be given in terms of the images of the $e_i$'s in $\mathfrak{M}_{\mathbb{F}}$. This finishes the proof.
\end{proof}
\section{Tangent spaces}

\subsection{Tangent space dimensions and extension groups}\label{tgt}

For any $\mathbb{F}$-valued point $x \in \mathcal{L}^{\mu,\operatorname{conv}}_R$ corresponding to $(\mathfrak{M}_x,\alpha_x,\mathcal{F}_x)$ we write $T_x$ for the tangent space of $\mathcal{L}^{\mu,\operatorname{conv}}_R \otimes_{\mathcal{O}} \mathbb{F}$ at $x$. In other words $T_x = \mathcal{L}^{\mu,\operatorname{conv}}_R(\mathbb{F}[\epsilon])$ where $\mathbb{F}[\epsilon]$ denotes the ring of dual numbers over $\mathbb{F}$. To understand these vector spaces observe that if $(\mathfrak{M},\alpha,\mathcal{F}) \in T_x$ then, since $\mathfrak{M}$ and $\mathcal{F}^i$ are $\mathbb{F}[\epsilon]$-flat,
$0\rightarrow \mathfrak{M}_x\rightarrow \mathfrak{M} \xrightarrow{\epsilon} \mathfrak{M}_x\rightarrow 0
$
is an exact sequence in $\operatorname{Mod}_{\mathcal{F}}$ (here we write $\mathfrak{M}_x$ in place of $(\mathfrak{M}_x,\mathcal{F}_x)$ and likewise for $\mathfrak{M}$). This construction produces an $\mathbb{F}$-linear homomorphism
$$
T_x \rightarrow \operatorname{Ext}^1_{\mathcal{F}}(\mathfrak{M}_x,\mathfrak{M}_x)
$$
into the Yoneda extension group in $\operatorname{Mod}_{\mathcal{F}}$.

\begin{proposition}\label{sdextensions}
	If $\mu$ is pseudo-Barsotti--Tate then $T_x\rightarrow \operatorname{Ext}^1_{\mathcal{F}}(\mathfrak{M}_x,\mathfrak{M}_x)$ factors through $\operatorname{Ext}^1_{\operatorname{SD}}(\mathfrak{M}_x,\mathfrak{M}_x)$.
\end{proposition}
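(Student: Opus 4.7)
The plan is to lift the tangent vector $v\in T_x$ to an integral point of $\mathcal{L}^{\mu,\operatorname{conv}}_R$ and then transfer the strong divisibility guaranteed by Proposition~\ref{hodgetypes} back down to $\mathbb{F}[\epsilon]$. Write $(\mathfrak{M},\alpha,\mathcal{F})$ for the $\mathbb{F}[\epsilon]$-valued triple corresponding to $v$; the goal is to exhibit $(\mathfrak{M},\mathcal{F})$, viewed as an object of $\operatorname{Mod}_{\mathcal{F}}$ through its underlying $\mathfrak{S}_{\mathbb{F}}$-structure, as an element of $\operatorname{Mod}^{\operatorname{SD}}_{\mathcal{F}}$. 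The basic point is that $\mathbb{F}[\epsilon]$ is not a finite flat $\mathcal{O}$-algebra, so Proposition~\ref{hodgetypes} cannot be applied directly; one must pass through a finite flat witness.

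To set up the lift, I would fix a totally ramified extension $\mathcal{O}'/\mathcal{O}$ with uniformiser $\pi'$ and absolute ramification index $e'\geq 2$, with residue field $\mathbb{F}$. The quotient map $\mathcal{O}'\twoheadrightarrow \mathcal{O}'/\pi'^2$ then induces an isomorphism of $\mathcal{O}$-algebras $\mathcal{O}'/\pi'^2\cong \mathbb{F}[\epsilon]$ (sending $\pi'\mapsto\epsilon$), since $p$ vanishes in $\mathcal{O}'/\pi'^2$. The crucial first step is to produce a homomorphism $\widetilde v\colon \widehat{\mathcal{O}}_{\mathcal{L}^{\mu,\operatorname{conv}}_R,x}\to \mathcal{O}'$ whose reduction modulo $\pi'^2$ recovers $v$. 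For this I would exploit the $\mathcal{O}$-flatness of $\mathcal{L}^{\mu,\operatorname{conv}}_R$, which is built into the definition of the scheme as the closure of $\operatorname{Spec}R^{\mu}[\tfrac1p]$ inside $\mathcal{L}^{\leq h_j,\operatorname{conv}}_R$. Presenting $\widehat{\mathcal{O}}_x\cong \mathcal{O}[[t_1,\dots,t_n]]/I$ with $I\cap \mathcal{O}=0$, one constructs $\widetilde v(t_i)\in \pi'\mathcal{O}'$ inductively in powers of $\pi'$, using the flatness to control the obstructions coming from $p$.

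Once $\widetilde v$ is fixed, Proposition~\ref{hodgetypes} applied to the $\mathcal{O}'$-valued point $\widetilde v$ (viewed as an $\mathcal{O}$-valued point by restriction of scalars) gives that $(\mathfrak{M}_{\widetilde v},\mathcal{F}_{\widetilde v})\otimes_{\mathcal{O}}\mathbb{F}\in\operatorname{Mod}^{\operatorname{SD}}_{\mathcal{F}}$. Since $p$ equals a unit multiple of $\pi'^{e'}$ with $e'\geq 2$, this module equals $\mathfrak{M}_{\widetilde v}/\pi'^{e'}\mathfrak{M}_{\widetilde v}$ and it surjects onto $\mathfrak{M} = \mathfrak{M}_{\widetilde v}/\pi'^2\mathfrak{M}_{\widetilde v}$ with kernel $\pi'^2\mathfrak{M}_{\widetilde v}/\pi'^{e'}\mathfrak{M}_{\widetilde v}$. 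The flatness of $\mathfrak{M}_{\widetilde v}$ and of the $\mathcal{F}^i_{\widetilde v}$ over $\mathcal{O}'$ ensures this is an exact sequence in $\operatorname{Mod}_{\mathcal{F}}$, at which point Proposition~\ref{SDext}(1) forces $(\mathfrak{M},\mathcal{F})\in\operatorname{Mod}^{\operatorname{SD}}_{\mathcal{F}}$.

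The main obstacle is clearly Step 1. For an arbitrary $\mathbb{Z}_p$-flat local ring, not every mod-$p$ tangent direction lifts to a sufficiently ramified DVR-valued point; simple examples like $\mathbb{Z}_p[[x,y]]/(x^2-py)$ at the origin already fail. So the argument must make use of specific structural features of $\mathcal{L}^{\mu,\operatorname{conv}}_R$ beyond mere $\mathcal{O}$-flatness, for instance by combining the properness of $\mathcal{L}^{\mu,\operatorname{conv}}_R\to \operatorname{Spec}R$ with the density of $A$-valued points for $A$ finite flat (the observation cited in the proof of Proposition~\ref{conv}), perhaps via an Artin-approximation-style thickening of a generic-fibre arc through $x$. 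Getting this lifting argument clean is the heart of the proof; the transfer of strong divisibility in Steps 2 and 3 is then formal, given the groundwork laid in Sections~\ref{categories}--\ref{sectionfilonphi}.
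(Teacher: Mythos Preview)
Your overall strategy is exactly the paper's: lift the $\mathbb{F}[\epsilon]$-point to a point valued in a finite flat $\mathcal{O}$-algebra, invoke Proposition~\ref{hodgetypes}, and then descend. The difficulty you flag in Step~1 is real \emph{for DVR lifts}, but self-imposed. The paper does not lift to a DVR; it lifts to an arbitrary finite flat $\mathcal{O}$-algebra $A$ surjecting onto $\mathbb{F}[\epsilon]$, citing \cite[4.1.2]{B19}. That lemma needs only that the scheme be $\mathcal{O}$-flat with reduced generic fibre, both of which hold for $\mathcal{L}^{\mu,\operatorname{conv}}_R$ by construction (its generic fibre is $\operatorname{Spec}R^\mu[\tfrac{1}{p}]$). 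Your counterexample $\mathbb{Z}_p[[x,y]]/(x^2-py)$ with tangent vector $(x,y)\mapsto(\epsilon,0)$ shows precisely why insisting on a DVR is too much; but that same tangent vector lifts to the finite flat non-reduced algebra $A=\mathbb{Z}_p[s]/(s^2)$ via $x\mapsto s,\ y\mapsto 0$, with $A\twoheadrightarrow \mathbb{F}_p[\epsilon]$ given by reduction modulo $p$. No properness or Artin-approximation input is required.

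Once the lift to $A$ is in hand, your Steps~2--3 go through verbatim with $\mathcal{O}'$ replaced by $A$: Proposition~\ref{hodgetypes} gives $(\mathfrak{M}_A,\mathcal{F}_A)\otimes_{\mathcal{O}}\mathbb{F}\in\operatorname{Mod}^{\operatorname{SD}}_{\mathcal{F}}$, and since $A\to\mathbb{F}[\epsilon]$ factors through $A\otimes_{\mathcal{O}}\mathbb{F}$ one obtains a surjection $(\mathfrak{M}_A,\mathcal{F}_A)\otimes_{\mathcal{O}}\mathbb{F}\twoheadrightarrow(\mathfrak{M},\mathcal{F})$ whose kernel is $\mathfrak{M}_A\otimes_A J$ for $J=\ker(A\otimes_{\mathcal{O}}\mathbb{F}\to\mathbb{F}[\epsilon])$. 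Flatness of $\mathfrak{M}_A$ and of each $\mathcal{F}^i_A$ over $A$ makes this an exact sequence in $\operatorname{Mod}_{\mathcal{F}}$, and Proposition~\ref{SDext}(1) forces $(\mathfrak{M},\mathcal{F})\in\operatorname{Mod}^{\operatorname{SD}}_{\mathcal{F}}$. So the only genuine gap is the unnecessary restriction to DVRs in your lifting step.
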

\begin{proof}
	Since $\mathcal{L}^{\mu,\operatorname{conv}}_R$ is $\mathcal{O}$-flat and $\mathcal{L}^{\mu,\operatorname{conv}}_R[\frac{1}{p}] = \operatorname{Spec}R^\mu[\frac{1}{p}]$ which is reduced it follows from \cite[4.1.2]{B19} that every $\overline{A}$-valued point of $\mathcal{L}^{\mu,\operatorname{conv}}_R$ valued in an Artin local ring with finite residue field is induced from an $A$-valued point for $A$ some finite flat $\mathcal{O}$-algebra. The claim therefore follows by applying this with $\overline{A} = \mathbb{F}[\epsilon]$ and using the first part of Proposition~\ref{hodgetypes}.
\end{proof}

\subsection{Cyclotomic-freeness}
To describe the kernel of $T_x \rightarrow \operatorname{Ext}^1_{\mathcal{F}}(\mathfrak{M}_x,\mathfrak{M}_x)$ we will need to use the cyclotomic-freeness assumption. We will also need a second technical result from \cite{GLS}. It is very closely related to Theorem~\ref{GLS} (in fact it is the main ingredient going into the proof of Theorem~\ref{GLS}). 

\begin{theorem}[Gee--Liu--Savitt]
	Suppose that $A$ is a finite local $\mathcal{O}$-algebra and $(\mathfrak{M},\alpha,\mathcal{F}) \in \mathcal{L}^{\mu,\operatorname{conv}}_R$ for any $\mu$. Then the identification
	$$
	\mathfrak{M} \otimes_{\mathfrak{S}} W(C^\flat) = V_\alpha \otimes_{\mathbb{Z}_p} W(C^\flat)
	$$
	is such that $(\sigma-1)(m) \in \mathfrak{M} \otimes [\pi^\flat]\varphi^{-1}\mu A_{\operatorname{inf}}$ for all $\sigma \in G_K$ and all $m \in \mathfrak{M}$. Here $\mu = [\epsilon]-1$ for some generator $\epsilon \in \mathbb{Z}_p(1)$. 
\end{theorem}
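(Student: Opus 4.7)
The plan is to reduce to the case where $A$ is finite flat over $\mathcal{O}$, and then invoke directly the Gee--Liu--Savitt descent (with Wang's extension to $p=2$). These are essentially assertions about a crystalline representation together with its associated Kisin module, so once the reduction is made the statement is already available in \cite{GLS} and \cite{Wang17}.

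First I would reduce from arbitrary finite local $A$ to finite flat $A$. Since $\mathcal{L}^{\mu,\operatorname{conv}}_R$ is $\mathcal{O}$-flat and its generic fibre $\operatorname{Spec}R^\mu[\tfrac{1}{p}]$ is reduced, the same argument used in Proposition~\ref{sdextensions} (via \cite[4.1.2]{B19}) shows that every $A$-valued point with $A$ finite local of finite residue field is induced from an $A'$-valued point for some finite flat $\mathcal{O}$-algebra $A'$. The inclusion $(\sigma-1)(m) \in \mathfrak{M} \otimes [\pi^\flat]\varphi^{-1}(\mu)A_{\operatorname{inf}}$ is transported compatibly along $A' \to A$, so it suffices to treat the flat case.

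Next, with $A$ finite flat, viewing $\mathfrak{M}$ as an $\mathfrak{S}$-module (forgetting the $A$-action) it is exactly the Kisin module associated to the crystalline representation $V_\alpha[\tfrac{1}{p}]$, as in \cite{Kis06}; this is the same identification already used in Section~\ref{filonphi}. The statement to prove thus becomes the assertion that the $G_K$-cocycle of this integral Kisin module, taken with respect to the embedding $\mathfrak{M} \hookrightarrow \mathfrak{M} \otimes_{\mathfrak{S}} A_{\operatorname{inf}} \hookrightarrow V_\alpha \otimes_{\mathbb{Z}_p} W(C^\flat)$, lies in the ideal $[\pi^\flat]\varphi^{-1}(\mu)A_{\operatorname{inf}}$. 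This is precisely the integrality output of \cite[\S 4]{GLS} when $p>2$; they construct the corresponding $(\varphi,\hat G)$-module and compute the cocycle explicitly modulo the relevant ideal. For $p=2$ the same conclusion is obtained by Wang \cite{Wang17}, and this is exactly where the specific choice of uniformiser from Lemma~\ref{1.1.1} is needed so that $K_\infty \cap K(\mu_{p^\infty}) = K$.

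The main obstacle is bookkeeping: one has to match the normalisations of $\pi^\flat$, of $\mu = [\epsilon]-1$, and of the $\varphi$-equivariant embedding $\mathfrak{S} \hookrightarrow A_{\operatorname{inf}}$ between the present paper and \cite{GLS}/\cite{Wang17}, and check that the ideal into which the cocycle is shown to land in those references is exactly $[\pi^\flat]\varphi^{-1}(\mu)A_{\operatorname{inf}}$ and not something slightly coarser. Once these conventions are aligned, no further computation is required; the theorem is a direct translation of the cited integrality results, together with the standard fact that $G_{K_\infty}$ acts trivially on $\mathfrak{M}$ built into the definition of $\mathcal{L}^{\leq h_j}_R$.
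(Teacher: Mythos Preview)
Your proposal is correct and follows essentially the same approach as the paper: reduce to the $\mathcal{O}$-flat case via \cite[4.1.2]{B19}, then invoke the known integrality result. The only difference is that the paper cites \cite[2.1.12]{B19} for the flat case rather than unwinding directly to \cite{GLS} and \cite{Wang17}, so the bookkeeping you flag about matching normalisations of $[\pi^\flat]$, $\mu$, and the embedding $\mathfrak{S}\hookrightarrow A_{\operatorname{inf}}$ is already absorbed into that reference.
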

\begin{proof}
	We reduce to the case where $A$ is $\mathcal{O}$-flat using \cite[4.1.2]{B19}. In this case the Theorem is one direction of \cite[2.1.12]{B19}.
\end{proof}

When $pA=0$ we have $\mathfrak{M} \otimes_{\mathfrak{S}} [\pi^\flat]\varphi^{-1}(\mu)A_{\operatorname{inf}} = \mathfrak{M} \otimes_{k[[u]]} u^{(e+p-1)/(p-1)}\mathcal{O}_{C^\flat}$ as follows from the well-known calculation that $\epsilon-1$ generates the ideal $u^{ep/(p-1)}\mathcal{O}_{C^\flat}$ whenever $\epsilon$ is a compatible system of primitive $p$-th power roots of unity, cf. \cite[5.2.1]{Fon00}. This motivates the following setup. Let $\mathfrak{M}_1,\mathfrak{M}_2$ be Breuil--Kisin modules over $\mathfrak{S}_{\mathbb{F}}$ satisfying
\begin{enumerate}
	\item $u^{e+p-1}\mathfrak{M}_i \subset \mathfrak{M}^\varphi_i$.
	\item Each $\mathfrak{M}_i \otimes_{k[[u]]} C^\flat$ is equipped with a $\varphi$-equivariant $C^\flat$-semilinear action of $G_K$ for which
	\begin{enumerate}
		\item $(\sigma-1)(m) \in \mathfrak{M}_i \otimes_{k[[u]]} u^{(e+p-1)/(p-1)}\mathcal{O}_{C^\flat}$ for $\sigma \in G_K$
		\item $(\sigma-1)(m) = 0$ for $\sigma \in G_{K_\infty}$
	\end{enumerate}
whenever $m \in \mathfrak{M}_i$.
\end{enumerate}

\begin{proposition}\label{unique}
		Let $\gamma \in \overline{k}$ be such that $\sigma(\gamma u^{(p+e-1)/(p-1)}) = \gamma u^{(p+e-1)/(p-1)} \chi_{\operatorname{cyc}}(\sigma)$ for all $\sigma \in G_{K_\infty}$. 
		If there exists no $\varphi$-equivariant $\mathfrak{S}_A$-linear map
		$$
		\mathfrak{M}_1 \rightarrow \gamma u^{(p+e-1)/(p-1)}\mathfrak{M}_2
		$$
		then any $\varphi$-equivariant $\mathfrak{S}_{\mathbb{F}}$-linear map $\mathfrak{M}_1 \rightarrow \mathfrak{M}_2$ becomes  $G_K$-equivariant after extending scalars to $\mathcal{O}_{C^\flat}$.
	\end{proposition}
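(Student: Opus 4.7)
The plan is to measure the failure of $G_K$-equivariance of the scalar extension $\hat f\colon \mathfrak{M}_1 \otimes_{k[[u]]}\mathcal{O}_{C^\flat} \to \mathfrak{M}_2 \otimes_{k[[u]]}\mathcal{O}_{C^\flat}$ of a given $\varphi$-equivariant $\mathfrak{S}_{\mathbb{F}}$-linear map $f\colon \mathfrak{M}_1 \to \mathfrak{M}_2$ by the cocycle $g_\sigma := \sigma\hat f\sigma^{-1} - \hat f$ for $\sigma \in G_K$, and to rule out a nonzero cocycle via the hypothesis. Each $g_\sigma$ is automatically $\mathcal{O}_{C^\flat}$-linear, $\varphi$-equivariant, and satisfies the cocycle identity $g_{\sigma\sigma'} = g_\sigma + \sigma g_{\sigma'}\sigma^{-1}$; it vanishes whenever $\sigma \in G_{K_\infty}$ by condition (2)(b), since the elements of $\mathfrak{M}_i$ and hence of $\hat f(\mathfrak{M}_1)\subset \mathfrak{M}_2$ are $G_{K_\infty}$-fixed. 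Writing $g_\sigma(m) = (\sigma-1)\hat f(m) + \sigma \hat f((\sigma^{-1}-1)m)$ for $m \in \mathfrak{M}_1$, applying condition (2)(a) to each summand, and using that $\mathfrak{M}_2 \otimes u^{(p+e-1)/(p-1)}\mathcal{O}_{C^\flat}$ is $\sigma$-stable, yields
$$
g_\sigma(\mathfrak{M}_1) \subset \mathfrak{M}_2 \otimes_{k[[u]]} u^{(p+e-1)/(p-1)}\mathcal{O}_{C^\flat}.
$$

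Since $\gamma u^{(p+e-1)/(p-1)}$ is a non-zero-divisor in $\mathcal{O}_{C^\flat}$, the inclusion above produces a unique $\mathfrak{S}_{\mathbb{F}}$-linear $h_\sigma\colon\mathfrak{M}_1 \to \mathfrak{M}_2 \otimes\mathcal{O}_{C^\flat}$ with $g_\sigma(m) = \gamma u^{(p+e-1)/(p-1)} h_\sigma(m)$. The identity $\varphi(\gamma u^{(p+e-1)/(p-1)}) = \gamma^{p-1}u^{p+e-1}\cdot \gamma u^{(p+e-1)/(p-1)}$ turns the $\varphi$-equivariance of $g_\sigma$ into $\varphi$-equivariance of $h_\sigma$ with respect to the twisted Frobenius $m\mapsto \gamma^{p-1}u^{p+e-1}\varphi(m)$ on the target, which is precisely the Frobenius identifying $\mathfrak{M}_2 \otimes \mathcal{O}_{C^\flat}$ with $\gamma u^{(p+e-1)/(p-1)}\mathfrak{M}_2 \otimes_{k[[u]]}\mathcal{O}_{C^\flat}$ as abstract Frobenius modules.

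The main obstacle, and the only step where the cyclotomic hypothesis on $\gamma$ enters, is to show that $h_\sigma$ actually lands in the sub-$\mathfrak{S}_{\mathbb{F}}$-module $\mathfrak{M}_2 \subset \mathfrak{M}_2 \otimes \mathcal{O}_{C^\flat}$, so that it defines a bona fide $\varphi$-equivariant $\mathfrak{S}_{\mathbb{F}}$-linear map $\mathfrak{M}_1 \to \gamma u^{(p+e-1)/(p-1)}\mathfrak{M}_2$. I plan to argue this by combining the cocycle identity $g_{\tau\sigma} = \tau g_\sigma\tau^{-1}$ for $\tau \in G_{K_\infty}$ (which follows from $g_\tau = 0$) with the transformation law $\tau(\gamma u^{(p+e-1)/(p-1)}) = \chi_{\operatorname{cyc}}(\tau)\gamma u^{(p+e-1)/(p-1)}$, which together pin down how $h_\sigma(m)$ must transform under $G_{K_\infty}$; the usual étale $\varphi$-module descent then recovers $h_\sigma$ from its Frobenius-module structure. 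Once descent is achieved, the hypothesis forces $h_\sigma = 0$, hence $g_\sigma = 0$ for every $\sigma \in G_K$, i.e., $\hat f$ is $G_K$-equivariant as required.
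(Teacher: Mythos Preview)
Your setup matches the paper's: form the cocycle $g_\sigma = \sigma\hat f\sigma^{-1}-\hat f$, note that it is $\varphi$-equivariant, vanishes on $G_{K_\infty}$, and lands in $\mathfrak{M}_2\otimes u^{(p+e-1)/(p-1)}\mathcal{O}_{C^\flat}$. The divergence is in how you try to kill the cocycle, and there is a genuine gap in your descent step.

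You want to show that each $h_\sigma$ lands in $\mathfrak{M}_2$, i.e.\ that $h_\sigma(m)$ is $G_{K_\infty}$-fixed for $m\in\mathfrak{M}_1$. But the cocycle identity you invoke only gives $g_{\tau\sigma}=\tau\cdot g_\sigma$ for $\tau\in G_{K_\infty}$, which translates to $h_{\tau\sigma}(m)=\chi_{\operatorname{cyc}}(\tau)\,\tau(h_\sigma(m))$. This relates $h_{\tau\sigma}$ to $h_\sigma$; it does \emph{not} say anything about $\tau(h_\sigma(m))$ versus $h_\sigma(m)$ for a fixed $\sigma$. Since $G_{K_\infty}$ is not normal in $G_K$, you cannot combine this with $h_{\sigma\tau'}=h_\sigma$ to get $G_{K_\infty}$-invariance. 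Nor does ``\'etale $\varphi$-module descent'' help: descent from $\mathcal{O}_{C^\flat}$ to $k[[u]]$ is precisely the statement $\mathcal{O}_{C^\flat}^{G_{K_\infty}}=k[[u]]$, so it presupposes the invariance you are trying to prove. In short, there is no reason for an individual $g_\sigma$ to lie in $\mathcal{H}^{\varphi=1}$.

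The paper proceeds differently. It does not attempt to descend each $g_\sigma$; instead it observes that the cocycle takes values in the finite-dimensional $G_K$-module $V=(\mathcal{H}\otimes_k\overline{k})^{\varphi=1}$ (using that $\mathcal{H}=\operatorname{Hom}(\mathfrak{M}_1,\gamma u^{(p+e-1)/(p-1)}\mathfrak{M}_2)$ is $\varphi$-stable). The choice of $\gamma$ makes $V\otimes\mathbb{F}(-1)$ unramified as a $G_{K_\infty}$-module, and the hypothesis $\mathcal{H}^{\varphi=1}=0$ translates into $(V\otimes\mathbb{F}(-1))^{G_{K_\infty}}=0$. One then appeals to a short group-cohomology lemma: for such $V$, inflation--restriction through $\widehat{K}=K_\infty(\mu_{p^\infty})$ shows $H^1(G_K,V)\to H^1(G_{K_\infty},V)$ is injective, so a cocycle vanishing on $G_{K_\infty}$ is identically zero. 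The hypothesis thus enters only indirectly, to guarantee $(V\otimes\mathbb{F}(-1))^{G_{K_\infty}}=0$; it is the cohomological argument, not a pointwise descent, that forces $g_\sigma=0$.
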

We point out that such a $\gamma$ exists because the character of $G_{K_\infty}$ defined by $\sigma(u^{(e+p-1)/(p-1)}) = \chi(\sigma)u^{(e+p-1)/(p-1)}$ is an unramified twist of the $\chi_{\operatorname{cyc}}$.
	\begin{proof}
		The $G_K$-action on $\operatorname{Hom}(\mathfrak{M}_1,\mathfrak{M}_2) \otimes_{k[[u]]} C^\flat$ given by $h \mapsto \sigma \circ h \circ \sigma^{-1}$ is $\varphi$-equivariant. We must show $(\sigma-1)(H) =0$ for all $\sigma \in G_K$ if $H \in \operatorname{Hom}(\mathfrak{M},\mathfrak{N})$ satisfies $(\varphi-1)(H)=0$. Assumption (2a) implies $(\sigma-1)(H) \in \mathcal{H} \otimes_{k[[u]]} \mathcal{O}_{C^\flat}$ for
		$$
		\mathcal{H} = \operatorname{Hom}(\mathfrak{M}_1,\gamma u^{(p+e-1)/(p-1)}\mathfrak{M}_2)
		$$
		Since the element $\gamma$ satisfies $\varphi(\gamma)/\gamma \in k$ assumption (1) implies
		$$
		(\gamma u^{(p+e-1)/(p-1)} \mathfrak{N})^\varphi \subset \gamma u^{p+e-1+(p+e-1)/(p-1)}\mathfrak{N}
		$$
		It follows that $\mathcal{H}$ is $\varphi$-stable inside $\operatorname{Hom}(\mathfrak{M},\mathfrak{N}) \otimes_k \overline{k}$. Since $H$ is $\varphi$-equivariant $\sigma \mapsto (\sigma-1)(H)$ defines a continuous $1$-cocycle
		$$
		G_K \rightarrow (\mathcal{H}\otimes_{k[[u]]} \mathcal{O}_{C^\flat})^{\varphi=1}
		$$
		which vanishes on $G_{K_\infty}$. We will show any such cocycle is zero. First, since $\mathcal{H}$ is $\varphi$-stable it is easy to see that $V:= (\mathcal{H}\otimes_k \overline{k})^{\varphi=1} = (\mathcal{H} \otimes_{k[[u]]} \mathcal{O}_{C^\flat})^{\varphi=1}$. By the choice of $\gamma$ and the fact that $G_{K_\infty}$-acts as the identity on $\operatorname{Hom}(\mathfrak{M}_1,\mathfrak{M}_2)$ it follows that the action of $G_{K_\infty}$ on $V \otimes_{\mathbb{F}} \mathbb{F}(-1)$ is unramified (i.e, is trivial on $G_{K_\infty} \cap I_K$ where $I_K \subset G_K$ denotes the inertia subgroup). We also claim that $V \otimes_{\mathbb{F}} \mathbb{F}(-1)$ contains no element invariant under $G_{K_\infty}$. This follows because, $K_\infty$ being totally ramified, the composite $G_{K_\infty} \rightarrow G_K \rightarrow G_k$ is surjective and so any such element would lie in $\mathcal{H}^{\varphi=1}$. However, by assumption no such element exists. The proposition then follows from Lemma~\ref{cohom} below.
	\end{proof}
	\begin{lemma}\label{cohom}
		Let $V$ be a continuous representation of $G_K$ on an $\mathbb{F}$-vector space such that $V\otimes_{\mathbb{F}} \mathbb{F}(-1)|_{G_{K_\infty}}$ is unramified and contains no $G_{K_\infty}$-invariant element. Then any continuous $1$-cocycle $G_K \rightarrow V$ which vanishes on $G_{K_\infty}$ is zero.
	\end{lemma}
	\begin{proof}
		Let $\widehat{K} = K_\infty(\mu_{p^\infty})$. Since the cyclotomic character is trivial on $G_{\widehat{K}}$ our first assumption implies that $G_{\widehat{K}}$ acts on $V$ through the composite $G_{\widehat{K}} \hookrightarrow G_K \rightarrow G_k$. This composite is surjective and so our second assumption implies $V^{G_{\widehat{K}}}=0$. Inflation-restriction therefore implies $H^1(G_K,V)\rightarrow H^1(G_{\widehat{K}},V)$ is injective and so $H^1(G_K,V)\rightarrow H^1(G_{K_\infty},V)$ is also. It follows that any $1$-cocycle as in the lemma is a coboundary $\sigma \mapsto (\sigma-1)(v)$ for some $v\in V^{G_{K_\infty}}$. However $V^{G_{K_\infty}} \subset V^{G_{\widehat{K}}}$ which we've just seen is zero.
	\end{proof}
	
	\begin{lemma}\label{cyclofree}
		Suppose that for every unramified $G_{K_\infty}$-submodule $V \subset V_{\mathbb{F}}$ there exists no $G_{K_\infty}$-equivariant quotient $V_{\mathbb{F}} \rightarrow W$ with $V \cong W\otimes \mathbb{F}(1)$. Then there exists no non-zero $\varphi$-equivariant map
		$$
		\mathfrak{M}_x \rightarrow \gamma u^{(e+p-1)/(p-1)}\mathfrak{M}_x
		$$
	\end{lemma}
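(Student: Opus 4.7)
The plan is to prove the contrapositive: assuming a non-zero $\varphi$-equivariant $\mathfrak{S}_{\mathbb{F}}$-linear map $f \colon \mathfrak{M}_x \to \gamma u^{(e+p-1)/(p-1)} \mathfrak{M}_x$ exists, I will produce an unramified $G_{K_\infty}$-submodule $V \subseteq V_{\mathbb{F}}$ together with a $G_{K_\infty}$-equivariant quotient $V_{\mathbb{F}} \twoheadrightarrow W$ satisfying $V \cong W \otimes \mathbb{F}(1)$, contradicting the hypothesis.

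The first step is to translate $f$ into a map of $G_{K_\infty}$-representations. By the defining property of $\gamma$, multiplication by $\gamma u^{(e+p-1)/(p-1)}$ is a $\chi_{\operatorname{cyc}}$-eigenvector inside $\mathcal{O}_{C^\flat}$, so $\gamma u^{(e+p-1)/(p-1)} \mathfrak{M}_x \subset \mathfrak{M}_x \otimes_{\mathfrak{S}_{\mathbb{F}}} \mathcal{O}_{C^\flat}$ is a $\varphi$-stable submodule whose $G_{K_\infty}$-representation under Fontaine's equivalence is $V_{\mathbb{F}}(-1)$. Thus $f$ induces a non-zero $G_{K_\infty}$-equivariant map $\phi \colon V_{\mathbb{F}} \to V_{\mathbb{F}}(-1)$; set $W := \operatorname{image}(\phi) \subseteq V_{\mathbb{F}}(-1)$ and $V := W(1) \subseteq V_{\mathbb{F}}$, so that $\phi$ factors as a $G_{K_\infty}$-equivariant surjection $V_{\mathbb{F}} \twoheadrightarrow W = V \otimes \mathbb{F}(-1)$.

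It remains to show that $V$ is unramified, which is where the $\mathfrak{S}_{\mathbb{F}}$-integrality of $f$ (as opposed to merely $\mathfrak{S}_{\mathbb{F}}[\tfrac{1}{u}]$-linearity, the level at which étale $\varphi$-module formalism sees the map) is crucial. The key input is the Gee--Liu--Savitt/Wang theorem recalled just above, which states that $(\sigma-1)(m) \in \mathfrak{M}_x \otimes [\pi^\flat]\varphi^{-1}(\mu) A_{\operatorname{inf}}$ for all $\sigma \in G_K$ and $m \in \mathfrak{M}_x$, i.e.\ in $\mathfrak{M}_x \otimes u^{(e+p-1)/(p-1)} \mathcal{O}_{C^\flat}$ modulo $p$. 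For $v \in V$, writing $\phi(v)$ using $f$ and exploiting the $\mathfrak{S}_{\mathbb{F}}$-linearity of $f$, one verifies that the inertia action on $\phi(V_{\mathbb{F}})$ predicted by the Gee--Liu--Savitt/Wang bound is precisely cancelled by the $\chi_{\operatorname{cyc}}$-transformation behavior of the generator $\gamma u^{(e+p-1)/(p-1)}$ of the target lattice. After untwisting by $\mathbb{F}(1)$, this leaves the inertia $I_K$ acting trivially on $V$, so $V$ is indeed unramified, and the hypothesis (applied to $V$) furnishes the desired contradiction.

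The main obstacle is the computation in the previous paragraph. Concretely, one must take $v \in V$, lift it to $\tilde v \in V_{\mathbb{F}} \otimes W(C^\flat)$ via the Fontaine identification, use that $f$ is $\mathfrak{S}_{\mathbb{F}}$-linear to compute $\phi(\tilde v)$ in terms of a lattice element of $\gamma u^{(e+p-1)/(p-1)} \mathfrak{M}_x$, apply $(\sigma - 1)$ for $\sigma \in I_K$, and show that the Gee--Liu--Savitt/Wang error term together with the $\chi_{\operatorname{cyc}}$-eigenvector property of $\gamma u^{(e+p-1)/(p-1)}$ combines to give zero in $V$ rather than merely a controlled error. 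Making this delicate valuation-cancellation argument rigorous, including the bookkeeping needed to pass the integrality across the Fontaine equivalence, is the technical heart of the lemma.
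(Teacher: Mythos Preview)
Your overall outline matches the paper's: argue by contrapositive, pass from $f$ to a nonzero $G_{K_\infty}$-equivariant map $\phi\colon V_{\mathbb{F}}\to V_{\mathbb{F}}(-1)$, set $W=\operatorname{im}(\phi)$ and $V=W(1)\subset V_{\mathbb{F}}$, and then prove $V$ is unramified. The disagreement is entirely in this last step.

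Your proposed mechanism for unramifiedness is the Gee--Liu--Savitt/Wang bound $(\sigma-1)(m)\in\mathfrak{M}_x\otimes u^{(e+p-1)/(p-1)}\mathcal{O}_{C^\flat}$, combined with the $\chi_{\operatorname{cyc}}$-behaviour of $\gamma u^{(e+p-1)/(p-1)}$. This does not work as stated. The unramifiedness to be proven is that of $V$ as a $G_{K_\infty}$-module, i.e.\ triviality of the $I_{K_\infty}=I_K\cap G_{K_\infty}$-action. But for every $\sigma\in G_{K_\infty}$ one already has $(\sigma-1)(m)=0$ for $m\in\mathfrak{M}_x$, so the GLS bound contributes nothing on the relevant subgroup. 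The nontrivial $G_{K_\infty}$-action on $V_{\mathbb{F}}=(\mathfrak{M}_x\otimes C^\flat)^{\varphi=1}$ comes from the action on $C^\flat$, and no valuation inequality on $(\sigma-1)(\mathfrak{M}_x)$ constrains it. The ``delicate valuation-cancellation argument'' you allude to therefore has no input to cancel; the integrality of $f$ is invisible to GLS, and the sketch cannot be completed along these lines. (A further minor issue: you speak of $I_K$ acting on $V$, but $V$ is only a $G_{K_\infty}$-submodule, so only $I_{K_\infty}$ is in play.)

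The paper's proof of unramifiedness is purely in terms of Breuil--Kisin modules and does not use GLS at all. One first reduces to the case where $H$ is injective and $H[\tfrac{1}{u}]$ is an isomorphism. Writing the Frobenii of $\mathfrak{M}=\mathfrak{M}_x$ and $\mathfrak{N}=\gamma u^{(e+p-1)/(p-1)}\mathfrak{M}_x$ as matrices $A,B$ and $H$ as a matrix $C$, the $\varphi$-equivariance reads $B\varphi(C)A^{-1}=C$. The height conditions $u^{e+p-1}\mathfrak{M}\subset\mathfrak{M}^\varphi$ and $\mathfrak{N}^\varphi\subset u^{e+p-1}\mathfrak{N}$ say that $u^{e+p-1}A^{-1}$ and $u^{-(e+p-1)}B$ have entries in $\mathfrak{S}_{\mathbb{F}}$; taking $u$-adic valuations of determinants in the displayed identity forces $C$, $u^{-(e+p-1)}B$, and $u^{e+p-1}A^{-1}$ all to be units over $\mathfrak{S}_{\mathbb{F}}$. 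Consequently the Frobenius on the twist $\mathfrak{M}'=(\gamma u^{(e+p-1)/(p-1)})^{-1}\mathfrak{M}$ is a semilinear bijection, so $(\mathfrak{M}'\otimes C^\flat)^{\varphi=1}$ is an unramified $G_{K_\infty}$-representation; since this equals $(\mathfrak{M}\otimes C^\flat)^{\varphi=1}\otimes\mathbb{F}(1)$, the image $V$ is unramified. The essential idea you are missing is this determinant/valuation argument, which extracts from the mere existence of $f$ an exact saturation of the height bound, and it is precisely the $\mathfrak{S}_{\mathbb{F}}$-integrality of $f$ that makes the valuation count go through.
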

\begin{proof}
	First consider $\mathfrak{M}$ and $\mathfrak{N}$ with $u^{e+p-1}\mathfrak{M} \subset \mathfrak{M}^\varphi$ and $\mathfrak{N}^\varphi \subset u^{e+p-1}\mathfrak{N}$ and suppose $H:\mathfrak{M}\rightarrow \mathfrak{N}$ is non-zero and $\varphi$-equivariant. Then there is a non-zero induced $G_{K_\infty}$-equivariant map
	\begin{equation}\label{map}
	(\mathfrak{M}\otimes_{k[[u]]} C^\flat)^{\varphi=1} \rightarrow (\mathfrak{N} \otimes_{k[[u]]} C^\flat)^{\varphi=1}
\end{equation}
	We claim that the action of $G_{K_\infty}$ on the image of this map is unramified after twisting by $\mathbb{F}(1)$. Applying this with $\mathfrak{M} = \mathfrak{M}_x$ and $\mathfrak{N} = \gamma u^{(e+p-1)/(p-1)}\mathfrak{M}_x$ gives the lemma since then $(\mathfrak{N} \otimes_{k[[u]]} C^\flat)^{\varphi=1}= V_{\mathbb{F}} \otimes \mathbb{F}(-1)$ and $(\mathfrak{M}\otimes_{k[[u]]} C^\flat)^{\varphi=1} = V_{\mathbb{F}}$.
	
	To establish the claim we can assume that $H$ is injective and becomes surjective after inverting $u$ so that \eqref{map} is an isomorphism. Choose bases of $\mathfrak{M}$ and $\mathfrak{N}$ so that their Frobenii are respectively represented by matrices $A$ and $B$, and so that $H$ is represented by $C$. The $\varphi$-equivariance of $H$ implies $B\varphi(C)A^{-1} = C$. The fact that $u^{e+p-1}\mathfrak{M} \subset \mathfrak{M}^\varphi$ and $\mathfrak{N}^\varphi \subset u^{e+p-1}\mathfrak{N}$ implies $u^{-(e+p-1)}B$ and $u^{e+p-1}A^{-1}$ have coefficients in $\mathfrak{S}_{\mathbb{F}}$. Considering the $u$-adic valuation of determinants in the identity $u^{-(e+p-1)}B \varphi(C) u^{e+p-1}A^{-1} = C$ implies that $C, u^{-(e+p-1)}B$ and $u^{(e+p-1)}A$ are invertible over $\mathfrak{S}_{\mathbb{F}}$. In particular, the Frobenius on $\mathfrak{M}' := (\gamma u^{(e+p-1)/(p-1)})^{-1}\mathfrak{M}$ is a semilinear automorphism and so the $G_{K_\infty}$-action on $(\mathfrak{M}' \otimes_{k[[u]]} C^\flat)^{\varphi=1}$ is unramified. The definition of $\gamma$ gives that $(\mathfrak{M}' \otimes_{k[[u]]} C^\flat)^{\varphi = 1} = (\mathfrak{M} \otimes_{k[[u]]} C^\flat)^{\varphi=1} \otimes \mathbb{F}(1)$ as $G_{K_\infty}$-representations, and the claim follows.
\end{proof}

\begin{lemma}\label{cyclofree=>}
	The conclusions of Lemma~\ref{cyclofree} apply if $V_{\mathbb{F}}$ is cyclotomic-free.
\end{lemma}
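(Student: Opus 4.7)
The plan is to argue by contradiction. Suppose $V_{\mathbb{F}}$ is cyclotomic-free with witness unramified extension $K'/K$, yet there exists an unramified $G_{K_\infty}$-submodule $V \subset V_{\mathbb{F}}$ together with a $G_{K_\infty}$-quotient $V_{\mathbb{F}} \twoheadrightarrow W$ satisfying $V \cong W \otimes \mathbb{F}(1)$. The goal is to extract from this data an unramified $G_{K'}$-Jordan--Hölder factor $\widetilde{\psi}$ of $V_{\mathbb{F}}|_{G_{K'}}$ together with another Jordan--Hölder factor isomorphic to $\widetilde{\psi} \otimes \mathbb{F}(-1)$, contradicting the second requirement of cyclotomic-freeness.

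First I would replace $K$ by $K'$ throughout, noting that $K'_\infty = K' \cdot K_\infty$ (since $K'/K$ is unramified and $\pi$ remains a uniformiser), so the hypotheses on $V$ and $W$ restrict compatibly. By cyclotomic-freeness each $G_{K'}$-Jordan--Hölder factor of $V_{\mathbb{F}}|_{G_{K'}}$ is a one-dimensional character. Pick any one-dimensional $G_{K'_\infty}$-subquotient $\psi$ of $V|_{G_{K'_\infty}}$; it is automatically unramified because $V$ is. Using the isomorphism $V \cong W \otimes \mathbb{F}(1)$, the character $\psi \otimes \mathbb{F}(-1)$ occurs as a $G_{K'_\infty}$-Jordan--Hölder factor of $W|_{G_{K'_\infty}}$, hence of $V_{\mathbb{F}}|_{G_{K'_\infty}}$.

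The core step, which I expect to be the main obstacle, is to lift both $G_{K'_\infty}$-characters to $G_{K'}$-characters appearing in $V_{\mathbb{F}}|_{G_{K'}}$. For this I would prove that restriction along $G_{K'_\infty} \hookrightarrow G_{K'}$ is injective on the set of continuous characters valued in $\mathbb{F}^\times$. The idea is that any such character has prime-to-$p$ image and so factors through the maximal tame abelian quotient of $G_{K'}$; meanwhile $K'_\infty/K'$ is totally and wildly ramified of $p$-power degree, and Lemma~\ref{1.1.1} (needed for $p=2$, automatic otherwise via the pro-$p$ versus prime-to-$p$ dichotomy) yields $K'^{\operatorname{tame}} \cap K'_\infty = K'$. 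Therefore $G_{K'_\infty}$ surjects onto $\operatorname{Gal}(K'^{\operatorname{tame}}/K')$, through which every character factors. Applying the same argument to the pair $I_{K'_\infty} \subset I_{K'}$ shows that if $\psi$ is unramified as a $G_{K'_\infty}$-character then its lift $\widetilde{\psi}$ is unramified as a $G_{K'}$-character.

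Putting everything together: because all $G_{K'}$-Jordan--Hölder factors of $V_{\mathbb{F}}|_{G_{K'}}$ are one-dimensional, their restrictions to $G_{K'_\infty}$ yield precisely the $G_{K'_\infty}$-Jordan--Hölder factors; by the injectivity of restriction, $\psi$ and $\psi \otimes \mathbb{F}(-1)$ lift uniquely to $G_{K'}$-characters $\widetilde{\psi}$ and $\widetilde{\psi}\otimes\mathbb{F}(-1)$ (uniqueness forcing compatibility with the Tate twist) both appearing as Jordan--Hölder factors, with $\widetilde{\psi}$ unramified. This directly contradicts the defining property of cyclotomic-freeness, completing the proof.
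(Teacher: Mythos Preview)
Your argument is correct and reaches the same contradiction as the paper, but by a different mechanism. The paper's proof is a two-line appeal to a cited result (\cite[2.2.1]{B18b}): restriction from $G_K$ to $G_{K_\infty}$ is an \emph{equivalence} on irreducible mod~$p$ representations and respects unramifiedness, so a bad pair $(V,W)$ of $G_{K_\infty}$-modules immediately yields a bad pair of $G_K$-Jordan--H\"older factors, and this visibly persists after restricting to any unramified $K'/K$. You instead pass to the witness $K'$ first, exploit that cyclotomic-freeness already forces all $G_{K'}$-Jordan--H\"older factors to be characters, and then prove by hand that restriction $G_{K'}\to G_{K'_\infty}$ is injective on $\mathbb{F}^\times$-valued characters via the disjointness $K'^{\operatorname{tame}}\cap K'_\infty = K'$. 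Your route is more elementary and self-contained (no external equivalence-of-categories input), at the cost of being longer; the paper's is terser but leans on a black box. One small remark: your invocation of Lemma~\ref{1.1.1} for $p=2$ is unnecessary, since $K'_\infty/K'$ is totally wildly ramified for every $p$ (each $x^{p^n}-\pi$ is Eisenstein), and that alone forces $K'^{\operatorname{tame}}\cap K'_\infty = K'$; Lemma~\ref{1.1.1} concerns the intersection with $K(\mu_{p^\infty})$, which is not what you need here.
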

\begin{proof}
	Restriction from $G_K$ to $G_{K_\infty}$ is an equivalence between irreducible representations of either group (cf. \cite[2.2.1]{B18b}) which respects being unramified. Thus, if $V$ and $W$ as in Lemma~\ref{cyclofree} exist then there is an unramified $G_K$-Jordan--Holder factor $V'$ of $V_{\mathbb{F}}$ for which $V' \otimes \mathbb{F}(1)$ is also a $G_K$-Jordan--Holder factor. Thus $V_{\mathbb{F}}$ is not cyclotomic-free.
\end{proof}
\subsection{Tangent space bounds}

\begin{proposition}\label{bound}
	If $\mu$ is pseudo-Barsotti--Tate and there are no  non-zero $\varphi$-equivariant maps
	$$
	\mathfrak{M}_x \rightarrow \gamma u^{(e+p-1)/(p-1)}\mathfrak{M}_x
	$$
	then $\operatorname{dim}_{\mathbb{F}}T_x \leq d^2 + d(\mu,\mu)$.
\end{proposition}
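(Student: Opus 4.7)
The plan is to produce a left-exact sequence
\[
0 \longrightarrow K_x \longrightarrow T_x \stackrel{\Psi}{\longrightarrow} \operatorname{Ext}^1_{\operatorname{SD}}(\mathfrak{M}_x,\mathfrak{M}_x)
\]
and bound each end separately. Writing $h := \dim_{\mathbb{F}}\operatorname{Hom}_{\mathcal{F}}(\mathfrak{M}_x,\mathfrak{M}_x)$, I aim for $\dim\operatorname{im}\Psi \leq h+d(\mu,\mu)$ and $\dim K_x \leq d^2-h$; adding the two bounds gives the desired inequality.

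The map $\Psi$ sends $(\mathfrak{M},\alpha,\mathcal{F})\in T_x$ to the class of
\[
0 \to (\mathfrak{M}_x,\mathcal{F}_x) \to (\mathfrak{M},\mathcal{F}) \stackrel{\epsilon}{\to} (\mathfrak{M}_x,\mathcal{F}_x) \to 0,
\]
which lies in $\operatorname{Ext}^1_{\operatorname{SD}}$ by Proposition~\ref{sdextensions} (this is where the pseudo-Barsotti--Tate hypothesis enters, via Proposition~\ref{hodgetypes}). The image bound is then immediate from the injection $\operatorname{Ext}^1_{\operatorname{SD}}(\mathfrak{M}_x,\mathfrak{M}_x)\hookrightarrow H^1(\mathcal{C}_{\operatorname{SD}})$ combined with Proposition~\ref{niceform}:
\[
\dim_{\mathbb{F}}\operatorname{im}\Psi \leq \dim_{\mathbb{F}}H^1(\mathcal{C}_{\operatorname{SD}}) = \dim_{\mathbb{F}}H^0(\mathcal{C}_{\operatorname{SD}}) + d(\mu,\mu) = h + d(\mu,\mu),
\]
using $H^0(\mathcal{C}_{\operatorname{SD}}) = \operatorname{Hom}_{\mathcal{F}}(\mathfrak{M}_x,\mathfrak{M}_x)$ from the lemma preceding Proposition~\ref{extdim}.

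For the kernel: any $\xi \in K_x$ admits an isomorphism $\iota:(\mathfrak{M}_x,\mathcal{F}_x)\otimes_{\mathbb{F}}\mathbb{F}[\epsilon] \xrightarrow{\sim}(\mathfrak{M},\mathcal{F})$ lifting the identity, determined up to an element of $\operatorname{Hom}_{\mathcal{F}}(\mathfrak{M}_x,\mathfrak{M}_x)$. After fixing $\iota$, the embedding $\mathfrak{M}\hookrightarrow V_\alpha\otimes W(C^\flat)$ yields (by inverting $[\pi^\flat]$ and taking $\varphi$-invariants, applying the equivalence between étale $\varphi$-modules and $G_{K_\infty}$-representations) a canonical identification $V_\alpha|_{G_{K_\infty}} \cong V_{\mathbb{F}}|_{G_{K_\infty}}\otimes\mathbb{F}[\epsilon]$. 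Combined with the framing of $V_\alpha$ inherited from $R=R^\square_{V_{\mathbb{F}}}$, this records a lift of the given basis of $V_{\mathbb{F}}$ inside the free $\mathbb{F}[\epsilon]$-module $V_{\mathbb{F}}\otimes\mathbb{F}[\epsilon]$, i.e.\ an element of $M_d(\mathbb{F})$, of dimension $d^2$. Proposition~\ref{unique} combined with the hypothesis implies that every element of $\operatorname{Hom}_{\mathcal{F}}(\mathfrak{M}_x,\mathfrak{M}_x)$ defines a $G_K$-equivariant endomorphism of $V_{\mathbb{F}}\otimes\mathcal{O}_{C^\flat}$, and a nonzero such element produces a nonzero change of framing (otherwise it would yield a nonzero $\varphi$-equivariant map $\mathfrak{M}_x\to \gamma u^{(e+p-1)/(p-1)}\mathfrak{M}_x$, contradicting the hypothesis). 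Hence the $\operatorname{Hom}_{\mathcal{F}}$-ambiguity cuts out an $h$-dimensional subspace of the $d^2$-dimensional framing space, giving $\dim K_x \leq d^2-h$.

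The principal technical obstacle is the kernel bound: I need to justify carefully that $\xi\in K_x$ is \emph{entirely} determined by the framing lift once $\iota$ is fixed, i.e.\ that the $G_K$-action on $V_\alpha$ beyond its $G_{K_\infty}$-restriction contributes no extra tangent directions. Formally this requires injectivity of $H^1(G_K,\operatorname{ad})\to H^1(G_{K_\infty},\operatorname{ad})$, which should follow from the hypothesis by an inflation-restriction argument in the spirit of Lemma~\ref{cohom}; verifying this translation (and ruling out unwanted Galois cocycles that vanish on $G_{K_\infty}$) is where the bulk of the work lies.
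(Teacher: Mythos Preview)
Your overall architecture matches the paper's exactly: split $T_x$ into the image and kernel of $\Psi$, bound the image by $h+d(\mu,\mu)$ via Proposition~\ref{niceform}, and bound the kernel by $d^2-h$. The image bound is handled identically.

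For the kernel, your packaging differs slightly from the paper's but is equivalent. The paper factors $T_x\to T\to \operatorname{Ext}^1(V_{\mathbb{F}},V_{\mathbb{F}})$, where $T$ is the tangent space of $R\otimes\mathbb{F}$, shows $K_x$ lands in the kernel of the composite, and then computes $\ker(T_x\to T)\cong \operatorname{Hom}(V_{\mathbb{F}},V_{\mathbb{F}})^{G_K}/\operatorname{Hom}_{\mathcal{F}}(\mathfrak{M}_x,\mathfrak{M}_x)$ via a transitivity argument. Your direct framing approach amounts to the same count.

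Where you go slightly astray is in your diagnosis of the ``principal technical obstacle''. You do \emph{not} need injectivity of $H^1(G_K,\operatorname{ad})\to H^1(G_{K_\infty},\operatorname{ad})$; indeed this is not what the hypothesis gives and is not what the paper proves. The correct move---and this is what the paper does---is to apply Proposition~\ref{unique} a second time, now to the filtered $\varphi$-equivariant isomorphism $\iota$ itself (viewing $\mathfrak{M}_x\otimes\mathbb{F}[\epsilon]$ and $\mathfrak{M}$ as Breuil--Kisin modules over $\mathfrak{S}_{\mathbb{F}}$). The hypothesis on $\mathfrak{M}_x$ transfers to the pair $(\mathfrak{M}_x\otimes\mathbb{F}[\epsilon],\mathfrak{M})$ since both are extensions of $\mathfrak{M}_x$ by itself, so any nonzero $\varphi$-map $\mathfrak{M}_x\otimes\mathbb{F}[\epsilon]\to \gamma u^{(e+p-1)/(p-1)}\mathfrak{M}$ would induce a nonzero $\mathfrak{M}_x\to\gamma u^{(e+p-1)/(p-1)}\mathfrak{M}_x$. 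Proposition~\ref{unique} then forces $\iota$ to be $G_K$-equivariant after base change, so $V_\alpha$ is the trivial $G_K$-deformation and $\xi$ is determined by the framing alone. The point is that the kernel condition gives you a \emph{Breuil--Kisin module} isomorphism, not merely a $G_{K_\infty}$-isomorphism, and it is this extra rigidity that Proposition~\ref{unique} leverages---bypassing any need for the full $H^1$ restriction map to be injective.
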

\begin{proof}
	By Proposition~\ref{sdextensions}, Proposition~\ref{niceform}, and Proposition~\ref{hodgetypes} it suffices to show that the kernel of $T_x \rightarrow \operatorname{Ext}^1_{\operatorname{SD}}(\mathfrak{M}_x,\mathfrak{M}_x)$ has $\mathbb{F}$-dimension 
	$$
	\leq d^2 - \operatorname{Hom}_{\mathcal{F}}(\mathfrak{M}_x,\mathfrak{M}_x)
	$$
	For this suppose $(\mathfrak{M}_i,\alpha_i,\mathcal{F}_i)$ are in this kernel for $i=1,2$. Observe that there is no non-zero $\varphi$-equivariant map $H :\mathfrak{M}_1 \rightarrow \gamma u^{(e+p-1)/(p-1)}\mathfrak{M}_2$
	as in Proposition~\ref{unique}. indeed, since $\mathfrak{M}_1$ and $\mathfrak{M}_2$ are both extensions of $\mathfrak{M}_x$ by itself any non-zero such $H$ would induce a non-zero $\mathfrak{M}_x \rightarrow \gamma u^{(e+p-1)/(p-1)}\mathfrak{M}_x$.
	
	The fact that $(\mathfrak{M}_i,\alpha_i,\mathcal{F}_i)$ are both in the kernel of $T_x \rightarrow \operatorname{Ext}^1_{\operatorname{SD}}(\mathfrak{M}_x,\mathfrak{M}_x)$ implies the existence of a morphism $\alpha:\mathfrak{M}_1 \rightarrow \mathfrak{M}_2$ in $\operatorname{Mod}_{\mathcal{F}}$ which is the identity on $\mathfrak{M}_x$ viewed as either a submodule and a quotient. The previous paragraph combined with Proposition~\ref{unique} shows this identifies with a $G_K$-equivariant map $V_{\alpha_1} \rightarrow V_{\alpha_2}$ after base-changing to $W(C^\flat)$ which induces the identity on $V_{\mathbb{F}}$ when viewed as either a submodule or a quotient.
	
	In particular, it follows that the kernel of $T_x \rightarrow \operatorname{Ext}^1_{\operatorname{SD}}(\mathfrak{M}_x,\mathfrak{M}_x)$ is contained in the kernel of the composite $T_x \rightarrow T \rightarrow \operatorname{Ext}^1(V_{\mathbb{F}},V_{\mathbb{F}})$ where $T$ denotes the tangent space of $R\otimes_{\mathcal{O}} \mathbb{F}$ at its closed point. Since the kernel of $T\rightarrow \operatorname{Ext}^1(V_{\mathbb{F}},V_{\mathbb{F}})$ identifies with $$
	\operatorname{Hom}(V_{\mathbb{F}},V_{\mathbb{F}})/\operatorname{Hom}(V_{\mathbb{F}},V_{\mathbb{F}})^{G_K}
	$$
	we are reduced to considering the kernel of $T_x\rightarrow T$. The group of $G_K$-equivariant automorphisms of $V_{\mathbb{F}} \oplus \epsilon V_{\mathbb{F}}$ which are the identity on $\epsilon V_{\mathbb{F}}$ and modulo $\epsilon$ act on this kernel. This group identifies with $\operatorname{Hom}(V_{\mathbb{F}},V_{\mathbb{F}})$ via $h \mapsto a+b\epsilon \mapsto a + h(b)\epsilon$. The first paragraph shows that this action is transitive. Furthermore, the stabiliser of the zero element in $T_x$ clearly identifies with those $h \in \operatorname{Hom}(V_{\mathbb{F}},V_{\mathbb{F}})^{G_K}$ inducing an endomorphism of $\mathfrak{M}_x$ which is a morphism in $\operatorname{Mod}_{\mathcal{F}}$. In this way we identify the kernel of $T_x\rightarrow T$ with 
	$$\operatorname{Hom}(V_{\mathbb{F}},V_{\mathbb{F}})^{G_K}/\operatorname{Hom}_{\mathcal{F}}(\mathfrak{M}_x,\mathfrak{M}_x)
	$$
	(where $\operatorname{Hom}_{\mathcal{F}}(\mathfrak{M}_x,\mathfrak{M}_x))$ is viewed as a submodule of $\operatorname{Hom}(V_{\mathbb{F}},V_{\mathbb{F}})^{G_K}$ using Proposition~\ref{unique}). This gives the desired bound.
\end{proof}

\begin{corollary}\label{surjective}
	If $\mu$ is pseudo-Barsotti--Tate and $V_{\mathbb{F}}$ is cyclotomic-free then the map $T_x \rightarrow \operatorname{Ext}^1_{\operatorname{SD}}(\mathfrak{M}_x,\mathfrak{M}_x)$ from Proposition~\ref{sdextensions} is surjective.
\end{corollary}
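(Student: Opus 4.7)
The plan is a dimension count that forces the sandwich of inequalities established in this section to be an equality throughout. First I would record that, under the hypotheses of the corollary, Lemma~\ref{cyclofree=>} ensures the assumption of Proposition~\ref{bound} is satisfied, and likewise the hypotheses of Theorem~\ref{main} hold. In particular the proof of Theorem~\ref{main} (which relies on Proposition~\ref{bound} together with the equidimensionality of $R^{\mu}[\tfrac{1}{p}]$ and $\mathcal{O}$-flatness of $\mathcal{L}^{\mu,\operatorname{conv}}_R$) yields the exact equality
$$
\dim_{\mathbb{F}} T_x = d^2 + d(\mu,\mu),
$$
not just the upper bound.

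Next I would recall the bound on the kernel extracted inside the proof of Proposition~\ref{bound}: its last few lines identify the kernel of $T_x \to \operatorname{Ext}^1_{\operatorname{SD}}(\mathfrak{M}_x,\mathfrak{M}_x)$ with a subquotient of $\operatorname{Hom}(V_{\mathbb{F}},V_{\mathbb{F}})$, giving
$$
\dim_{\mathbb{F}} \ker\bigl(T_x \to \operatorname{Ext}^1_{\operatorname{SD}}(\mathfrak{M}_x,\mathfrak{M}_x)\bigr) \leq d^2 - \dim_{\mathbb{F}} \operatorname{Hom}_{\mathcal{F}}(\mathfrak{M}_x,\mathfrak{M}_x).
$$
On the target side, the injection $\operatorname{Ext}^1_{\operatorname{SD}}(\mathfrak{M}_x,\mathfrak{M}_x) \hookrightarrow H^1(\mathcal{C}_{\operatorname{SD}})$ combined with Proposition~\ref{niceform} (applicable since $\mathfrak{M}_x \in \operatorname{Mod}^{\operatorname{SD}}_{\mathcal{F}}$ by Proposition~\ref{hodgetypes}) gives
$$
\dim_{\mathbb{F}} \operatorname{Ext}^1_{\operatorname{SD}}(\mathfrak{M}_x,\mathfrak{M}_x) \leq \dim_{\mathbb{F}} \operatorname{Hom}_{\mathcal{F}}(\mathfrak{M}_x,\mathfrak{M}_x) + d(\mu,\mu).
$$

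Finally I would assemble the chain of inequalities
$$
d^2+d(\mu,\mu) = \dim T_x \leq \dim \ker + \dim \operatorname{im} \leq \dim \ker + \dim \operatorname{Ext}^1_{\operatorname{SD}} \leq d^2 + d(\mu,\mu),
$$
where the first and last terms coincide. Consequently every inequality in the chain is an equality; in particular $\dim_{\mathbb{F}} \operatorname{im}(T_x \to \operatorname{Ext}^1_{\operatorname{SD}}(\mathfrak{M}_x,\mathfrak{M}_x)) = \dim_{\mathbb{F}} \operatorname{Ext}^1_{\operatorname{SD}}(\mathfrak{M}_x,\mathfrak{M}_x)$, yielding the surjectivity claim. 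There is no real obstacle here beyond bookkeeping: the only subtle point is that one must invoke Theorem~\ref{main} (and not merely Proposition~\ref{bound}) to upgrade the tangent-space bound to an equality, and one must check that the cyclotomic-freeness hypothesis genuinely supplies both the input to Proposition~\ref{bound} via Lemma~\ref{cyclofree=>}-Lemma~\ref{cyclofree} and the input to Theorem~\ref{main}.
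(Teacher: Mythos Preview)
Your proposal is correct and follows essentially the same approach as the paper's proof: both force equality in the chain of inequalities underlying Proposition~\ref{bound} by invoking the lower bound $\dim_{\mathbb{F}} T_x \geq d^2 + d(\mu,\mu)$ extracted from the proof of Theorem~\ref{main}. The paper phrases this as a two-line contradiction argument (non-surjectivity would make the bound in Proposition~\ref{bound} strict), while you unwind the same dimension count explicitly, but the content is identical.
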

\begin{proof}
	If $T_x \rightarrow \operatorname{Ext}^1_{\operatorname{SD}}(\mathfrak{M}_x,\mathfrak{M}_x)$ is not surjective then the bound in Proposition~\ref{bound} would be strict. However, the discussion in the proof of Theorem~\ref{main} illustrates that  $\operatorname{dim}_{\mathbb{F}}T_x \geq d^2 + d(\mu,\mu)$.
\end{proof}
\section{Applications}

\subsection{Constructing crystalline lifts}

Given two Hodge types $\mu$ and $\mu'$ we write $(\mu,\mu')$ for the Hodge type obtained by taking $\mu_{ij} \cup \mu_{ij}'$ for each $ij$. Note that if $V$ and $V'$ are crystalline representations of Hodge type $\mu$ and $\mu'$ respectively then $V\oplus V'$ has Hodge type $(\mu,\mu')$.
\begin{lemma}\label{liftingextensions}
	Suppose that $(\mathfrak{M},\alpha,\mathcal{F})$ and $(\mathfrak{M}',\alpha',\mathcal{F}')$ respectively correspond to $\mathcal{O}$-valued points in $\mathcal{L}^{\mu,\operatorname{conv}}_{R_{V_{\mathbb{F}}}}$ and $\mathcal{L}^{\mu',\operatorname{conv}}_{R_{V_{\mathbb{F}}'}}$ and consider an exact sequence
	\begin{equation}\label{sequencetolift}
	0 \rightarrow (\mathfrak{M},\mathcal{F}) \otimes_{\mathcal{O}} \mathbb{F}\rightarrow (\mathfrak{N}_{\mathbb{F}},\mathcal{G}_{\mathbb{F}}) \rightarrow (\mathfrak{M}',\mathcal{F}') \otimes_{\mathcal{O}} \mathbb{F} \rightarrow 0	
\end{equation}
	in $\operatorname{Mod}_{\mathcal{F}}^{\operatorname{SD}}$. Assume that $\mu$ is pseudo-Barsotti--Tate and $V_{\mathbb{F}}\oplus V_{\mathbb{F}}'$ is cyclotomic-free. Then there exists a $G_K$-equivariant exact sequence
	$$
	0 \rightarrow V_{\mathbb{F}} \rightarrow W_{\mathbb{F}} \rightarrow V_{\mathbb{F}}' \rightarrow 0
	$$
	which identifies $\varphi,G_{K_\infty}$-equivariantly with \eqref{sequencetolift} after base-change to $W(C^\flat)$, and an $\mathcal{O}$-valued point $(\mathfrak{N},\beta,\mathcal{G}) \in \mathcal{L}^{(\mu,\mu'),\operatorname{conv}}_{R_{W_{\mathbb{F}}}}$ with $\mathfrak{N}$ fitting into a $\varphi$-equivariant exact sequence of $\mathfrak{S}_{\mathcal{O}}$-modules
	$$
	0 \rightarrow \mathfrak{M} \rightarrow \mathfrak{N} \rightarrow \mathfrak{M}' \rightarrow 0
	$$
	which recovers \eqref{sequencetolift} after applying $\otimes_{\mathcal{O}} \mathbb{F}$.
\end{lemma}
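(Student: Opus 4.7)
The plan is to work inside $\mathcal{L}^{(\mu,\mu'),\operatorname{conv}}_{R_{V_{\mathbb{F}} \oplus V_{\mathbb{F}}'}}$ at the direct-sum closed point $x$ corresponding to $\mathfrak{M}_x = (\mathfrak{M}\oplus\mathfrak{M}')\otimes_{\mathcal{O}}\mathbb{F}$ (with filtration $(\mathcal{F}\oplus\mathcal{F}')\otimes\mathbb{F}$ and framing $\alpha\oplus\alpha'$), and leverage both the tangent-space surjectivity of Corollary~\ref{surjective} and the formal smoothness of Theorem~\ref{main}. First I would interpret the given extension as a class $[\mathfrak{N}_{\mathbb{F}}]\in\operatorname{Ext}^1_{\operatorname{SD}}(\mathfrak{M}'_{\mathbb{F}},\mathfrak{M}_{\mathbb{F}})$, embedded into the ambient $\operatorname{Ext}^1_{\operatorname{SD}}(\mathfrak{M}_x,\mathfrak{M}_x)$ as the off-diagonal block (the explicit representative being the extension whose middle term is $\mathfrak{N}_{\mathbb{F}}\oplus\mathfrak{M}_{\mathbb{F}}\oplus\mathfrak{M}'_{\mathbb{F}}$, with its canonical subobject $\mathfrak{M}_x$ and quotient $\mathfrak{M}_x$). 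Since $V_{\mathbb{F}}\oplus V'_{\mathbb{F}}$ is cyclotomic-free, Corollary~\ref{surjective} (via Lemma~\ref{cyclofree=>}) gives a tangent vector $t\in T_x$ mapping to $[\mathfrak{N}_{\mathbb{F}}]$.

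Next I would upgrade $t$ to an $\mathcal{O}$-valued point. By \cite[4.1.2]{B19}, cited in the proof of Proposition~\ref{sdextensions}, the underlying $\mathbb{F}[\epsilon]$-point of the moduli is induced from an $A$-valued point for some finite flat $\mathcal{O}$-algebra $A$; combining with formal smoothness at $x$ (which makes the completed local ring a power-series ring over $\mathcal{O}$), we can produce an $\mathcal{O}$-valued point $(\mathfrak{N},\beta,\mathcal{G})$ whose mod-$p$ tangent direction along the closed fibre realises the class $[\mathfrak{N}_{\mathbb{F}}]$. Because the chosen tangent direction is strictly upper triangular, the resulting deformation is itself upper triangular: $\mathfrak{N}$ fits into a $\varphi$-equivariant short exact sequence $0\to\mathfrak{M}\to\mathfrak{N}\to\mathfrak{M}'\to 0$ of $\mathfrak{S}_{\mathcal{O}}$-modules (compatibly with the filtrations), and on the Galois side $V_\beta$ fits into $0\to V_\alpha\to V_\beta\to V_{\alpha'}\to 0$. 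Setting $W_{\mathbb{F}} := V_\beta\otimes_{\mathcal{O}}\mathbb{F}$ gives the desired $G_K$-extension, and the identification with \eqref{sequencetolift} after base-change to $W(C^\flat)$ then follows from Kisin's equivalence for $G_{K_\infty}$-representations combined with the uniqueness of $G_K$-extension supplied by Proposition~\ref{unique}.

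The main obstacle is ensuring that the integral lift $(\mathfrak{N},\beta,\mathcal{G})$ is an honest extension with the \emph{specified} sub and quotient $\mathfrak{M}$ and $\mathfrak{M}'$, rather than some other $\mathcal{O}$-lifts of $\mathfrak{M}_{\mathbb{F}}$ and $\mathfrak{M}'_{\mathbb{F}}$. The key point is that cyclotomic-freeness, via Proposition~\ref{unique}, rigidifies $\mathcal{O}$-lifts: any two $\mathcal{O}$-valued points of $\mathcal{L}^{\mu,\operatorname{conv}}_{R_{V_{\mathbb{F}}}}$ (respectively $\mathcal{L}^{\mu',\operatorname{conv}}_{R_{V'_{\mathbb{F}}}}$) extending the fixed framings are canonically isomorphic, so the sub and quotient arising from the upper-triangular deformation are canonically identified with the originally given $\mathfrak{M}$ and $\mathfrak{M}'$. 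One must also track the framing carefully to pass from a point of $\mathcal{L}^{(\mu,\mu'),\operatorname{conv}}_{R_{V_{\mathbb{F}}\oplus V'_{\mathbb{F}}}}$ to a point of $\mathcal{L}^{(\mu,\mu'),\operatorname{conv}}_{R_{W_{\mathbb{F}}}}$, which is done by using a basis of $V_{\mathbb{F}}$ extended to a basis of $W_{\mathbb{F}}$ as the framing; under this choice the upper-triangular deformation of the direct sum corresponds precisely to a deformation of $W_{\mathbb{F}}$.
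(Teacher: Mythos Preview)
Your overall strategy matches the paper's: work at the direct-sum $\mathcal{O}$-point $x$ of $\mathcal{L}^{(\mu,\mu'),\operatorname{conv}}_{R_{V_{\mathbb{F}}\oplus V'_{\mathbb{F}}}}$, embed the given extension class into $\operatorname{Ext}^1_{\operatorname{SD}}(\mathfrak{M}_x,\mathfrak{M}_x)$ as the off-diagonal block, invoke Corollary~\ref{surjective} to realise it by a tangent vector $t\in T_{x_{\mathbb{F}}}$, and then lift using the formal smoothness of Theorem~\ref{main}. The problem is in the nature of the lift.

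You try to lift $t$ to an $\mathcal{O}$-valued point $(\mathfrak{N},\beta,\mathcal{G})$, but an $\mathcal{O}$-point carries no tangent information in the special fibre: its reduction mod $\pi$ is an $\mathbb{F}$-point, not an $\mathbb{F}[\epsilon]$-point, so the class $[\mathfrak{N}_{\mathbb{F}}]$ is lost. The paper instead lifts $t$ to an $\mathcal{O}[\epsilon]$-valued point (dual numbers over $\mathcal{O}$) which reduces to $x$ modulo $\epsilon$ and to $t$ modulo $\pi$; formal smoothness makes this possible since the completed local ring is a power series ring over $\mathcal{O}$. This $\mathcal{O}[\epsilon]$-point yields a $\varphi$-equivariant exact sequence
\[
0 \to \mathfrak{M}\oplus\mathfrak{M}' \to \mathfrak{N}^* \to \mathfrak{M}\oplus\mathfrak{M}' \to 0
\]
of $\mathfrak{S}_{\mathcal{O}}$-modules (the outer terms being exactly the given $\mathfrak{M},\mathfrak{M}'$ because the point equals $x$ mod $\epsilon$). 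One then pulls back along $\mathfrak{M}'\hookrightarrow\mathfrak{M}\oplus\mathfrak{M}'$ and pushes out along $\mathfrak{M}\oplus\mathfrak{M}'\twoheadrightarrow\mathfrak{M}$ to obtain $0\to\mathfrak{M}\to\mathfrak{N}\to\mathfrak{M}'\to 0$; since these operations commute with $\otimes_{\mathcal{O}}\mathbb{F}$ and undo the off-diagonal embedding, one recovers \eqref{sequencetolift} on the nose. This bypasses entirely the ``main obstacle'' you identify.

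Your proposed fix for that obstacle is also incorrect: Proposition~\ref{unique} asserts that $\varphi$-equivariant maps are automatically $G_K$-equivariant under the cyclotomic-freeness hypothesis, but it does \emph{not} say that $\mathcal{O}$-lifts of a given $\mathbb{F}$-point are canonically isomorphic. Indeed $\mathcal{L}^{\mu,\operatorname{conv}}_{R_{V_{\mathbb{F}}}}$ has positive relative dimension over $\mathcal{O}$, so there are many non-isomorphic $\mathcal{O}$-points over $x_{\mathbb{F}}$.
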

\begin{proof}
	The triple $(\mathfrak{M} \oplus \mathfrak{M}',\alpha \oplus \alpha',\mathcal{F}\oplus \mathcal{F}')$ can be viewed as an $\mathcal{O}$-valued point $x$ of $\mathcal{L}^{(\mu,\mu'),\operatorname{conv}}_{R_{V_{\mathbb{F}}\oplus V_{\mathbb{F}}'}}$. Let $x_{\mathbb{F}}$ be the composite $\operatorname{Spec}\mathbb{F}\rightarrow \operatorname{Spec}\mathcal{O}\xrightarrow{x}\mathcal{L}^{(\mu,\mu'),\operatorname{conv}}_{R_{V_{\mathbb{F}}\oplus V_{\mathbb{F}}'}}$. From \eqref{sequencetolift} we can construct an exact sequence
	$$
	0 \rightarrow (\mathfrak{M}\oplus \mathfrak{M},\mathcal{F}\oplus \mathcal{F}') \otimes_{\mathcal{O}} \mathbb{F}\rightarrow (\mathfrak{N}_{\mathbb{F}}^*,\mathcal{G}_{\mathbb{F}}^*) \rightarrow (\mathfrak{M}\oplus \mathfrak{M},\mathcal{F}\oplus \mathcal{F}') \otimes_{\mathcal{O}} \mathbb{F} \rightarrow 0
	$$
	recovering \eqref{sequencetolift} after pulling back\footnote{Note that pullbacks and pushouts exist in $\operatorname{Mod}_{\operatorname{\mathcal{F}}}$; the pushout of two morphisms $f:\mathfrak{M} \rightarrow \mathfrak{N}$ and $g:\mathfrak{M}\rightarrow \mathfrak{N}'$ is constructed as the cokernel of $(f,-g):\mathfrak{M} \rightarrow \mathfrak{N}\oplus\mathfrak{N}'$. Similarly the pullback of $f:\mathfrak{M} \rightarrow \mathfrak{N}$ and $g:\mathfrak{M}'\rightarrow \mathfrak{N}$ is constructed as the kernel of $f-g:\mathfrak{M} \oplus \mathfrak{M}' \rightarrow \mathfrak{N}$. It follows from Proposition~\ref{SDext} that these construction respect the full subcategory $\operatorname{Mod}^{\operatorname{SD}}_{\mathcal{F}}$.} along $(\mathfrak{M}',\mathcal{F}') \otimes_{\mathcal{O}} \mathbb{F} \rightarrow (\mathfrak{M}\oplus \mathfrak{M}',\mathcal{F}\oplus \mathcal{F}')\otimes_{\mathcal{O}}\mathbb{F}$ and then pushing out along $(\mathfrak{M}\oplus \mathfrak{M}',\mathcal{F}\oplus \mathcal{F}')\otimes_{\mathcal{O}}\mathbb{F}\rightarrow (\mathfrak{M},\mathcal{F}) \otimes_{\mathcal{O}} \mathbb{F}$. Corollary~\ref{surjective} implies that this new exact sequence arises from a tangent vector $t$ in $\mathcal{L}^{(\mu,\mu'),\operatorname{conv}}_{R_{V_{\mathbb{F}}\oplus V_{\mathbb{F}}'}}$ over the point $x_{\mathbb{F}}$.
	
	As $V_{\mathbb{F}}$ and $V_{\mathbb{F}}'$ are cyclotomic-free the same is true of $V_{\mathbb{F}}\oplus V_{\mathbb{F}}'$. Therefore Theorem~\ref{main} applies and the completed local ring of $\mathcal{L}^{(\mu,\mu'),\operatorname{conv}}_{R_{V_{\mathbb{F}}\oplus V_{\mathbb{F}}'}}$ is formally smooth over $\mathbb{Z}_p$. Hence $t$ can be lifted to an $\mathcal{O}[\epsilon]$-valued point (dual numbers over $\mathcal{O}$) inducing $\overline{x}$, cf. . Such a point gives rise to a $\varphi$-equivariant exact sequence
	$$
	0 \rightarrow \mathfrak{M} \oplus \mathfrak{M}' \rightarrow \mathfrak{N}^*\rightarrow \mathfrak{M} \oplus \mathfrak{M}' \rightarrow 0
	$$
	of $\mathfrak{S}_{\mathcal{O}}$-modules which $\varphi,G_{K_\infty}$-equivariantly identifies with an exact sequence of crystalline $G_K$-representations $0 \rightarrow V_{\alpha}\oplus V_{\alpha'} \rightarrow W^* \rightarrow V_{\alpha}\oplus V_{\alpha'}\rightarrow 0$ after base-changing to $ W(C^\flat)$. Pulling back along $\mathfrak{M}' \rightarrow \mathfrak{M}\oplus \mathfrak{M}'$ and then pushing out along $\mathfrak{M}\oplus \mathfrak{M}'\rightarrow \mathfrak{M}$ produces a $\varphi$-equivariant exact sequence
	$$
	0 \rightarrow (\mathfrak{M},\mathcal{F}) \rightarrow (\mathfrak{N},\mathcal{G}) \rightarrow (\mathfrak{M}',\mathcal{F}') \rightarrow 0
	$$
	and a $G_K$-equivariant exact sequence $0 \rightarrow V_{\alpha}\rightarrow W\rightarrow V_{\alpha'}\rightarrow 0$ of crystalline representations which $\varphi,G_{K_\infty}$-equivariantly identify after base-changing to $W(C^\flat)$. Since the formation of the pullbacks and pushouts commutes with $\otimes_{\mathcal{O}} \mathbb{F}$ we recover \eqref{sequencetolift} after basechanging to $\mathbb{F}$. Thus $W = R_{W_{\mathbb{F}}} \otimes_{\beta} \mathcal{O}$ for some $\beta$ and $(\mathfrak{N},\beta,\mathcal{G})$ defines an $\mathcal{O}$-point of $\mathcal{L}^{(\mu,\mu'),\operatorname{conv}}_{R_{W_{\mathbb{F}}}}$ as desired.
\end{proof}
\begin{lemma}
	Suppose that $V_{\mathbb{F}}$ is one-dimensional and that $(\mathfrak{M}_{\mathbb{F}},\alpha_{\mathbb{F}},\mathcal{F}_{\mathbb{F}})$ corresponds to an $\mathbb{F}$-valued $x_{\mathbb{F}}$ point of $\mathcal{L}^{\leq h_j,\operatorname{conv}}_{R}$ with $(\mathfrak{M},\mathcal{F}) \in \operatorname{Mod}^{\operatorname{SD}}_{\mathcal{F}}$ with pseudo-Barsotti--Tate Hodge type $\mu$.\footnote{We point out that every object of $\operatorname{Mod}_{\mathcal{F}}$ of rank one over $\mathfrak{S}_{\mathbb{F}}$ is contained in $\operatorname{Mod}^{\operatorname{SD}}_{\mathcal{F}}$.} Then there exists an $\mathcal{O}$-valued point $(\mathfrak{M},\alpha,\mathcal{F})$ of $\mathcal{L}^{\mu,\operatorname{conv}}_R$ inducing $x_{\mathbb{F}}$.
\end{lemma}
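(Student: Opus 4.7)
My plan is to build the lift by hand, exploiting the rank-one hypothesis.

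First I would extract the relevant combinatorial data. Since $V_{\mathbb{F}}$ is one-dimensional, $\mathfrak{M}_{\mathbb{F}}$ is locally free of rank one over $\mathfrak{S}_{\mathbb{F}}$, and Definition~\ref{SD} supplies an $\mathbb{F}_p[[u^p]]$-basis $(\overline{e}_i)$ of $\varphi(\mathfrak{M}_{\mathbb{F}})$ together with integers $r_i \in [0,p]$ such that $\mathcal{F}^1_{\mathbb{F}}$ is generated by $(u^{r_i}\overline{e}_i)$. By $\mathfrak{S}_{\mathbb{F}}$-stability of $\mathcal{F}^1_{\mathbb{F}}$ the $r_i$ depend only on the embedding $k \hookrightarrow \mathbb{F}$. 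For $j \geq 2$, the rank-one successive filtration pieces $\mathcal{F}^{j-1}_{\mathbb{F}}/\mathcal{F}^j_{\mathbb{F}}$ (subject to $E_j(u) \mathcal{F}^{j-1}_{\mathbb{F}} \subset \mathcal{F}^j_{\mathbb{F}}$) are determined by integers $s_{ij} \in \{0,1\}$ recording which $E_{ij}(u)$-factors appear. By Proposition~\ref{hodgetypes} this combinatorial data encodes precisely the prescribed pseudo--Barsotti--Tate Hodge type $\mu$.

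Next I would lift naively: using the decomposition $\mathfrak{S}_{\mathcal{O}} \cong \prod_i \mathcal{O}[[u]]$ from Section~\ref{coeff}, take $\mathfrak{M}$ free of rank one over $\mathfrak{S}_{\mathcal{O}}$ with a basis $e$, and define the Frobenius by specifying that its $i$-th component acts by multiplication by
$$
c_i \cdot E_{i1}(u)^{r_i}\prod_{j \geq 2} E_{ij}(u)^{s_{ij}}
$$
for some unit $c_i \in \mathcal{O}[[u]]^\times$ lifting the corresponding mod-$p$ Frobenius unit. Let $\mathcal{F}^j \subset \mathfrak{M}^\varphi$ be the submodule generated by the obvious partial $E$-products. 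By construction $(\mathfrak{M},\mathcal{F})$ lies in $\mathcal{L}^{\leq h_j,\operatorname{conv}}(\mathcal{O})$ and reduces mod $p$ to $(\mathfrak{M}_{\mathbb{F}},\mathcal{F}_{\mathbb{F}})$.

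Finally, the classical classification of rank-one crystalline characters (e.g.\ \cite[1.2.1]{Kis10} together with the explicit basis of Proposition~\ref{basis}) identifies $\mathfrak{M}$ with the Breuil--Kisin module of a unique crystalline character $V_\alpha$ of $G_K$, whose Hodge--Tate weights are read off from the exponents $(r_i, s_{ij})$ and hence coincide with $\mu$. Reduction mod $p$ recovers $\chi_{\mathbb{F}} = V_{\mathbb{F}}$, and after adjusting the chosen framing on $V_{\mathbb{F}}$ we may arrange $\alpha$ to lift $\alpha_{\mathbb{F}}$; crystallinity of $V_\alpha[\tfrac{1}{p}]$ then ensures $\alpha$ factors through $R^\mu$, placing $(\mathfrak{M},\alpha,\mathcal{F})$ in $\mathcal{L}^{\mu,\operatorname{conv}}_R$ by $\mathcal{O}$-flatness of $R^\mu$. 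The main obstacle is the bookkeeping that tracks the Frobenius permutation of the factors of $\mathfrak{S}_{\mathcal{O}}$ and verifies that the chosen Frobenius matrix does yield a crystalline character with the prescribed Hodge--Tate weights; this is routine in the rank-one theory once the combinatorics $(r_i, s_{ij})$ are in hand.
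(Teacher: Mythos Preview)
Your approach is essentially the same as the paper's: both argue that rank-one Breuil--Kisin modules over $\mathfrak{S}_{\mathcal{O}}$ correspond to crystalline characters with easily-computed Hodge type, and that any rank-one object of $\operatorname{Mod}_{\mathcal{F}}$ admits an explicit lift realising the prescribed type. The paper compresses all of this into two sentences, citing \cite[2.2.3]{GLS15} for the rank-one Hodge-type dictionary and declaring the lifting step to ``follow easily from the definitions''; you have simply written out those definitions and that dictionary explicitly via the exponents $(r_i,s_{ij})$.

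Two small remarks on presentation. Your appeal to Proposition~\ref{hodgetypes} is misdirected: that proposition goes from an $\mathcal{O}$-point to the mod-$p$ Hodge type, whereas here the Hodge type $\mu$ is given directly on $(\mathfrak{M}_{\mathbb{F}},\mathcal{F}_{\mathbb{F}})$ via the definition in Section~\ref{reform}; no proposition is needed to read off $\mu$ from $(r_i,s_{ij})$. Also, ``adjusting the chosen framing on $V_{\mathbb{F}}$'' is not quite right, since the framing is fixed by $R$; what you mean is that the basis of the rank-one $\mathcal{O}$-module $V_\alpha$ can be scaled so as to lift the given basis of $V_{\mathbb{F}}$, which is immediate.
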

\begin{proof}
	The correspondence between Breuil--Kisin modules over $\mathfrak{S}_{\mathcal{O}}$ of rank one and crystalline characters is an equivalence. Moreover, in the rank one case the Hodge type of any such character is easily described in terms of the associated Breuil--Kisin module, cf. \cite[2.2.3]{GLS15}. It is therefore sufficient to observe that every rank one object in $\operatorname{Mod}_{\mathcal{F}}$ with a given Hodge type can be lifted to a rank one Breuil--Kisin module over $\mathcal{O}$$\mathfrak{S}_{\mathcal{O}}$ so that the Hodge type of the object in $\operatorname{Mod}_{\mathcal{F}}$ coincides with that of the corresponding crystalline character. This follows easily from the definitions.
\end{proof}
\begin{corollary}\label{liftingresult}
	Continue to assume $\mu$ is pseudo-Barsotti--Tate and $V_{\mathbb{F}}$ is cyclotomic-free. Suppose that every Jordan--Holder factor of $V_{\mathbb{F}}$ is one-dimensional and $(\mathfrak{M}_{\mathbb{F}},\alpha_{\mathbb{F}},\mathcal{F}_{\mathbb{F}})$ corresponds to an $\mathbb{F}$-valued point of $\mathcal{L}^{\mu,\operatorname{conv}}_R$. Then there exists an $\mathcal{O}$-valued point $(\mathfrak{M},\alpha,\mathcal{F})$ of $\mathcal{L}^{\mu,\operatorname{conv}}_R$ lying over $x_{\mathbb{F}}$ such that every Jordan--Holder factor of $V_{\alpha}[\frac{1}{p}]$ is one-dimensional.
\end{corollary}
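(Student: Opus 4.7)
The plan is to induct on $d = \dim_{\mathbb{F}} V_{\mathbb{F}}$, using the immediately preceding (rank-one) lemma as the base case $d=1$. For the inductive step I will peel off a one-dimensional $G_K$-subrepresentation $V_{\mathbb{F}}' \subset V_{\mathbb{F}}$, which exists because every Jordan--H\"older factor of $V_{\mathbb{F}}$ is one-dimensional by hypothesis. Setting $V_{\mathbb{F}}'' = V_{\mathbb{F}}/V_{\mathbb{F}}'$, both quotients remain cyclotomic-free since cyclotomic-freeness depends only on the semisimplification. Restricting to $G_{K_\infty}$ and using the exactness of Kisin's Breuil--Kisin module functor, I obtain a saturated sub-Breuil--Kisin module $\mathfrak{M}_{\mathbb{F}}' \subset \mathfrak{M}_{\mathbb{F}}$ with quotient $\mathfrak{M}_{\mathbb{F}}''$ corresponding to $V_{\mathbb{F}}''$.

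Next I will upgrade this to an exact sequence in $\operatorname{Mod}_{\mathcal{F}}^{\operatorname{SD}}$. First, $(\mathfrak{M}_{\mathbb{F}}, \mathcal{F}_{\mathbb{F}})$ itself lies in $\operatorname{Mod}_{\mathcal{F}}^{\operatorname{SD}}$ with Hodge type $\mu$: Theorem~\ref{main} gives the existence of some $\mathcal{O}$-lift of $x_{\mathbb{F}}$, to which Proposition~\ref{hodgetypes} applies; reducing that lift modulo~$p$ yields both assertions. Equipping $\mathfrak{M}_{\mathbb{F}}'$ with the induced filtration $\mathcal{E}^i = \mathcal{F}^i \cap (\mathfrak{M}_{\mathbb{F}}')^\varphi$ and $\mathfrak{M}_{\mathbb{F}}''$ with the quotient filtration $\mathcal{G}^i = \mathcal{F}^i/\mathcal{E}^i$ gives a candidate exact sequence
$$
0 \to (\mathfrak{M}_{\mathbb{F}}', \mathcal{E}) \to (\mathfrak{M}_{\mathbb{F}}, \mathcal{F}_{\mathbb{F}}) \to (\mathfrak{M}_{\mathbb{F}}'', \mathcal{G}) \to 0
$$
in $\operatorname{Mod}_{\mathcal{F}}$, whose exactness at each filtration level I verify directly using the strongly divisible basis supplied by Definition~\ref{SD}. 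Proposition~\ref{SDext}(1) then forces both outer terms into $\operatorname{Mod}_{\mathcal{F}}^{\operatorname{SD}}$, and their Hodge types $\mu'$, $\mu''$ are automatically pseudo-Barsotti--Tate with $(\mu', \mu'') = \mu$.

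To conclude, apply the rank-one lemma to lift $(\mathfrak{M}_{\mathbb{F}}', \mathcal{E})$ to an $\mathcal{O}$-point of $\mathcal{L}^{\mu',\operatorname{conv}}_{R_{V_{\mathbb{F}}'}}$ and the inductive hypothesis to lift $(\mathfrak{M}_{\mathbb{F}}'', \mathcal{G})$ to an $\mathcal{O}$-point of $\mathcal{L}^{\mu'',\operatorname{conv}}_{R_{V_{\mathbb{F}}''}}$ whose generic fiber has only one-dimensional Jordan--H\"older factors. Feed these two lifts together with the above $\operatorname{Mod}_{\mathcal{F}}^{\operatorname{SD}}$-extension into Lemma~\ref{liftingextensions}: the pseudo-Barsotti--Tate hypothesis on the subobject is $\mu'$, and $V_{\mathbb{F}}' \oplus V_{\mathbb{F}}''$ is cyclotomic-free (same semisimplification as $V_{\mathbb{F}}$). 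The lemma outputs an $\mathcal{O}$-point $(\mathfrak{N}, \beta, \mathcal{G})$ of $\mathcal{L}^{\mu,\operatorname{conv}}_{R_{W_{\mathbb{F}}}}$; since the lemma asserts that the resulting Galois extension $\varphi, G_{K_\infty}$-equivariantly recovers the given $\operatorname{Mod}_{\mathcal{F}}^{\operatorname{SD}}$-extension modulo~$p$, we get $W_{\mathbb{F}} = V_{\mathbb{F}}$ and $(\mathfrak{N}, \beta, \mathcal{G}) \otimes_{\mathcal{O}} \mathbb{F} = x_{\mathbb{F}}$. Then $V_\beta[\tfrac{1}{p}]$ is an extension of $V_{\alpha''}[\tfrac{1}{p}]$ by $V_{\alpha'}[\tfrac{1}{p}]$, both of which have only one-dimensional Jordan--H\"older factors, so the same holds for $V_\beta[\tfrac{1}{p}]$.

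I expect the main obstacle to be the second paragraph: verifying that the $G_{K_\infty}$-subrepresentation actually descends to a genuine short exact sequence inside $\operatorname{Mod}_{\mathcal{F}}$, i.e.\ that the induced filtration $\mathcal{E}^\bullet$ and the quotient filtration $\mathcal{G}^\bullet$ satisfy all the defining axioms of $\operatorname{Mod}_{\mathcal{F}}$ (divisibility relations and $A$-flat graded pieces) at each level. This is essentially a question of choosing the strongly divisible basis of $\mathfrak{M}_{\mathbb{F}}$ compatibly with the $G_{K_\infty}$-filtration, for which Proposition~\ref{SDext}(3) provides the splitting mechanism. Once this compatibility is in hand, the rest of the argument is formal bookkeeping with Lemma~\ref{liftingextensions}.
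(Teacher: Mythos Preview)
Your proposal is correct and follows essentially the same inductive strategy as the paper's proof: split $V_{\mathbb{F}}$ as a $G_K$-extension, pass to the induced exact sequence of filtered Breuil--Kisin modules, lift the pieces by induction, and glue via Lemma~\ref{liftingextensions}. The paper is terser---it takes an arbitrary nontrivial $G_K$-exact sequence (rather than peeling off a one-dimensional sub), cites \cite[5.1.3]{B18} for the existence of the induced Breuil--Kisin exact sequence, and invokes Proposition~\ref{SDext}(2) for the Hodge-type additivity $\mu=(\mu_1,\mu_2)$---but the content is the same. Your explicit verification that $W_{\mathbb{F}}=V_{\mathbb{F}}$ at the end is a point the paper leaves implicit; note that this ultimately rests on Proposition~\ref{unique} (the $G_K$-action on the extension is determined by its $G_{K_\infty}$-restriction under the cyclotomic-free hypothesis), not merely on the $\varphi,G_{K_\infty}$-identification alone.
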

\begin{proof}
	We induct on the dimension of $V_{\mathbb{F}}$. If $V_{\mathbb{F}}$ is one-dimensional there is nothing to prove. For the general case, any $G_K$-equivariant exact sequence $0\rightarrow V_{\mathbb{F},1} \rightarrow V_{\mathbb{F}} \rightarrow V_{\mathbb{F},2}\rightarrow 0$ induces a unique $\varphi$-equivariant exact sequence $0 \rightarrow \mathfrak{M}_{\mathbb{F},1} \rightarrow \mathfrak{M}_{\mathbb{F}}\rightarrow \mathfrak{M}_{\mathbb{F},2}\rightarrow 0$ which recovers the sequence of $G_K$-representations $\varphi,G_{K_\infty}$-equivariantly after base-change to $C^\flat$, cf. \cite[5.1.3]{B18}. By equipping $\mathfrak{M}_{\mathbb{F},i}$ with the appropriate filtrations we view this as a sequence in $\operatorname{Mod}_{\mathcal{F}}$. Using part (2) of Proposition~\ref{SDext} we see that if $\mathfrak{M}_{\mathbb{F},i}$ has Hodge type $\mu_i$ then $\mu = (\mu_1,\mu_2)$. Both $V_{\mathbb{F},i}$ are also cyclotomic-free and so our inductive hypothesis produces lifts of $\mathfrak{M}_{\mathbb{F},i}$. Using Lemma~\ref{liftingextensions} we obtain such a lift for $V_{\mathbb{F}}$ also.
\end{proof}
\subsection{Potential diagonalisability}

Let $V$ be a $G_K$-stable $\mathcal{O}$-lattice inside a crystalline representation of Hodge type $\mu$ and set $V_{\mathbb{F}} = V\otimes_{\mathcal{O}} \mathbb{F}$. Following \cite{BLGGT} we say $V$ is diagonalisable if $V$ lies on the same irreducible component of $\operatorname{Spec}R^\mu$ (equivalently the same irreducible component of $\operatorname{Spec}R^\mu[\frac{1}{p}]$) as an $E'$-valued point, for $E'/E$ finite, which is a direct sum of characters. We say $V$ is potentially diagonalisable if $V|_{G_{K'}}$ is diagonalisable for $K'/K$ some finite extension.

\begin{lemma}\label{pot}
	If $V[\frac{1}{p}]$ lies in the same irreducible component of $\operatorname{Spec}R^\mu$ as an $E'$-valued point whose corresponding representation admits a $G_K$-stable filtration with one-dimensional graded pieces then $V$ is potentially diagonalisable. 
\end{lemma}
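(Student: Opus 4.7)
The plan is to reduce the statement to the following assertion, essentially due to \cite{BLGGT}: any crystalline representation of $G_K$ admitting a $G_K$-stable filtration with one-dimensional graded pieces is itself potentially diagonalisable.

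Granting this, let $W$ denote the $E'$-valued point of $\operatorname{Spec}R^\mu_{V_{\mathbb{F}}}$ appearing in the hypothesis. For any finite extension $K'/K$, restriction of deformations induces a continuous morphism
\[
\operatorname{Spec}R^\mu_{V_{\mathbb{F}}} \to \operatorname{Spec}R^{\mu|_{K'}}_{V_{\mathbb{F}}|_{G_{K'}}}
\]
(after enlarging coefficients if necessary). Since $V[\frac{1}{p}]$ and $W$ lie on a common irreducible component $Z$ of the source, their images $V|_{G_{K'}}$ and $W|_{G_{K'}}$ lie in the continuous image of $Z$, which is irreducible and hence contained in a single irreducible component of the target. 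By the reduced assertion applied to $W$, there exists $K'/K$ for which $W|_{G_{K'}}$ is diagonalisable; then $V|_{G_{K'}}$ is diagonalisable for the same $K'$, which is exactly what is meant by $V$ being potentially diagonalisable.

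The key step is thus the reduced assertion about successive extensions of characters. In the spirit of the present paper, I would produce a one-parameter family of filtered Breuil--Kisin modules over $\operatorname{Spec}\mathcal{O}'[[t]]$: take the filtered Breuil--Kisin module of $W$ (which inherits a Frobenius-stable filtration by rank-one sub-Breuil--Kisin modules from the $G_K$-stable filtration on $W$), pick a basis adapted to this filtration, and scale the off-diagonal entries of the Frobenius matrix by $t$. At $t=1$ this recovers the Breuil--Kisin module of $W$, while at $t=0$ one obtains that of the direct sum $\bigoplus \chi_i$ of the graded characters. Provided $K'/K$ is enlarged so that $\overline{W}|_{G_{K'}} \cong \bigoplus \overline{\chi}_i|_{G_{K'}}$---which can be arranged by adjoining the fixed fields of the residual extension cocycles---this family descends to a morphism from the irreducible $\operatorname{Spec}\mathcal{O}'[[t]]$ into $\operatorname{Spec}R^{\mu|_{K'}}_{\overline{W}|_{G_{K'}}}$, forcing $W|_{G_{K'}}$ and $\bigoplus \chi_i|_{G_{K'}}$ onto the same irreducible component.

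The hard part will be verifying that the interpolating family defines a genuine $\mathcal{O}'[[t]]$-valued point of $\mathcal{L}^{\mu|_{K'},\operatorname{conv}}_{R_{\overline{W}|_{G_{K'}}}}$ rather than merely a formal family of $\varphi$-modules, and in particular that the scaled Frobenius still has cokernel annihilated by $\prod E_j(u)^{h_j}$ with the correct filtration behaviour. This is the same issue as the formal smoothness of ordinary crystalline deformation rings in the classical setting, and in the pseudo-Barsotti--Tate case it should follow from Theorem~\ref{main} once one checks that $\overline{W}|_{G_{K'}}$ remains cyclotomic-free (the cyclotomic-freeness condition passes to semisimplifications and to Jordan--Holder factors of restrictions to $G_{K'}$, and the successive extension defining $\overline{W}|_{G_{K'}}$ does not reintroduce cyclotomic pairs).
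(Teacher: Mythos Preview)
The paper's own proof of this lemma is a bare citation to \cite[2.1.2]{GL14}; no argument is given in the paper itself. Your first two paragraphs---reducing to the assertion that a crystalline representation with a $G_K$-stable full flag is potentially diagonalisable, and then propagating ``same irreducible component'' along the restriction map $\operatorname{Spec}R^\mu_{V_{\mathbb{F}}} \to \operatorname{Spec}R^{\mu|_{K'}}_{V_{\mathbb{F}}|_{G_{K'}}}$---are correct and are exactly the structure of the cited result. Had you stopped there and simply quoted \cite{BLGGT} or \cite{GL14} for the reduced assertion, your proof would match the paper's.

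Where you diverge is in your third and fourth paragraphs, where you attempt to \emph{reprove} the reduced assertion using the Breuil--Kisin machinery of the present paper. This is problematic on several counts. First, Lemma~\ref{pot} is stated with no hypothesis on $\mu$ or on $V_{\mathbb{F}}$, so invoking Theorem~\ref{main} (which requires pseudo-Barsotti--Tate $\mu$ and cyclotomic-free $V_{\mathbb{F}}$) cannot yield the lemma in the generality claimed. Second, even under those extra hypotheses, your sketch is incomplete: scaling the off-diagonal entries of the Frobenius matrix by $t$ does not obviously preserve the condition that the cokernel is killed by $\prod_j E_j(u)^{h_j}$, nor the filtration $\mathcal{F}$, nor the Hodge type, and you yourself flag this as ``the hard part'' without resolving it. Third, there is a circularity risk: the paper uses Lemma~\ref{pot} as an external black box precisely so that it can be combined with Theorem~\ref{main} in the final corollary, so routing the proof of Lemma~\ref{pot} back through Theorem~\ref{main} would collapse that separation of concerns.

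The actual argument in \cite{GL14} (building on \cite{BLGGT}) takes a completely different route: it uses the irreducibility of \emph{ordinary} crystalline deformation rings (after Geraghty), valid for arbitrary Hodge types, to connect a representation with a full flag to the direct sum of its graded characters. No Breuil--Kisin modules enter.
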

\begin{proof}
	See \cite[2.1.2]{GL14}.
\end{proof}
\begin{corollary}
	If $\mu$ is pseudo-Barsotti--Tate and $V_{\mathbb{F}}$ is cyclotomic-free then $V$ is potentially diagonalisable.
\end{corollary}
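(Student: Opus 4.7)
My plan is to reduce to Corollary~\ref{liftingresult} by passing to an unramified extension and then transferring irreducible-component information via Theorem~\ref{main}. First I would let $K'/K$ be the unramified extension provided by Definition~\ref{introdef} applied to $V_{\mathbb{F}}$. Then $V_{\mathbb{F}}|_{G_{K'}}$ has all Jordan--Holder factors one-dimensional, and $V_{\mathbb{F}}|_{G_{K'}}$ is again cyclotomic-free as a $G_{K'}$-representation (witnessed by the same $K'$ in Definition~\ref{introdef}). Since $K'/K$ is unramified the ramification index is unchanged, the minimal polynomial $E(u)$ remains irreducible over $W(k')$, and Hodge--Tate weights at any embedding $\kappa':K'\rightarrow\overline{\mathbb{Q}}_p$ coincide with those of $V$ at $\kappa'|_K$; thus $V|_{G_{K'}}$ is crystalline of pseudo-Barsotti--Tate Hodge type $\mu'$, and it suffices to prove $V|_{G_{K'}}$ is potentially diagonalisable.

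After possibly enlarging $\mathcal{O}$ I would regard $V|_{G_{K'}}$ as an $\mathcal{O}$-valued point $\alpha$ of $R^{\mu'}_{V_{\mathbb{F}}|_{G_{K'}}}$; by the bijection on $\mathcal{O}$-valued points recorded in the remark following Proposition~\ref{conv} it lifts uniquely to an $\mathcal{O}$-valued point $(\mathfrak{M},\alpha,\mathcal{F})$ of $\mathcal{L}^{\mu',\operatorname{conv}}_{R_{V_{\mathbb{F}}|_{G_{K'}}}}$, with special fibre $x_{\mathbb{F}}$. Applying Corollary~\ref{liftingresult} to $x_{\mathbb{F}}$ then produces a second $\mathcal{O}$-valued point $(\mathfrak{M}',\alpha',\mathcal{F}')$ of the same space, reducing to $x_{\mathbb{F}}$, for which the crystalline representation $V_{\alpha'}[\tfrac{1}{p}]$ has all Jordan--Holder factors one-dimensional.

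The key transfer step then invokes Theorem~\ref{main}: the completed local ring of $\mathcal{L}^{\mu',\operatorname{conv}}_{R_{V_{\mathbb{F}}|_{G_{K'}}}}$ at $x_{\mathbb{F}}$ is formally smooth over $\mathbb{Z}_p$, hence regular and in particular an integral domain. Therefore a unique irreducible component of $\mathcal{L}^{\mu',\operatorname{conv}}_{R_{V_{\mathbb{F}}|_{G_{K'}}}}$ passes through $x_{\mathbb{F}}$, and both $\mathcal{O}$-valued lifts $\alpha$ and $\alpha'$ must factor through this common component. Since $\mathcal{L}^{\mu',\operatorname{conv}}\rightarrow\operatorname{Spec}R^{\mu'}$ is an isomorphism after inverting $p$ (Proposition~\ref{conv}) and $\mathcal{L}^{\mu',\operatorname{conv}}$ is $\mathcal{O}$-flat by construction, the images of $\alpha$ and $\alpha'$ lie on a common irreducible component of $\operatorname{Spec}R^{\mu'}[\tfrac{1}{p}]$. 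Lemma~\ref{pot} then concludes that $V|_{G_{K'}}$, and hence $V$, is potentially diagonalisable.

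The main obstacle I anticipate is not the geometric transfer itself, which is essentially formal once Theorem~\ref{main} is in hand, but rather the careful bookkeeping with indexing of embeddings needed to identify $\mu'$ with the ``same'' Hodge type as $\mu$ along the unramified base change, and the verification that the hypotheses of Corollary~\ref{liftingresult} descend correctly to $V_{\mathbb{F}}|_{G_{K'}}$. All the substantive content has been concentrated in the preceding sections, and this final step is really the payoff of the smoothness theorem combined with the crystalline lifting result.
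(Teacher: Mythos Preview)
Your proposal is correct and follows essentially the same route as the paper's proof: pass to the unramified extension $K'$ supplied by cyclotomic-freeness, lift the closed point via Corollary~\ref{liftingresult} to an ordinary crystalline point, use Theorem~\ref{main} to place both $\mathcal{O}$-points on the same irreducible component of $\mathcal{L}^{\mu',\operatorname{conv}}$, transfer this to $\operatorname{Spec}R^{\mu'}$ via the isomorphism after inverting $p$, and conclude with Lemma~\ref{pot}. The extra justifications you supply (preservation of pseudo-Barsotti--Tate type under unramified base change, the bijection on $\mathcal{O}$-points from the remark after Proposition~\ref{conv}) are correct and simply make explicit points the paper leaves implicit.
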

\begin{proof}
	We can replace $V$ by $V|_{G_{K'}}$ for any finite extension $K'/K$. Since $V_{\mathbb{F}}$ is cyclotomic-free we can choose $K'$ so that $V_{\mathbb{F}}|_{G_{K'}}$ has one-dimensional Jordan--Holder factors and is also cyclotomic-free. Let $x$ be the $\mathcal{O}$-valued point of $\mathcal{L}^{\mu,\operatorname{conv}}_{R}$ corresponding to the $\mathcal{O}$-valued point of $\operatorname{Spec}R^{\mu}$ associated to $V$. Corollary~\ref{liftingresult} produces an $\mathcal{O}$-valued point $x'$ such that (i) $x$ and $x'$ coincide on the closed point of $\operatorname{Spec}\mathcal{O}$, and (ii) $\operatorname{Spec}\mathcal{O}\xrightarrow{x'} \rightarrow \mathcal{L}^{\mu,\operatorname{conv}}_{R} \rightarrow \operatorname{Spec}R^\mu$ corresponds to a deformation $V'$ with every Jordan--Holder factor of $V'[\frac{1}{p}]$ one-dimensional. Part (i) implies $x$ and $x'$ lie in the same connected component of $\mathcal{L}^{\mu,\operatorname{conv}}_R$, and so the same irreducible component in view of Theorem~\ref{main}. Hence $V$ and $V'$ lie on the same irreducible component of $\operatorname{Spec}R^\mu$. As $V'$ is potentially diagonalisable by Lemma~\ref{pot} so is $V$.
\end{proof}
\bibliography{/home/user/Dropbox/Maths/biblio.bib}
\end{document}